\documentclass[11pt]{article}
\usepackage{amssymb}
\usepackage{amsfonts}
\usepackage{amsmath}
\usepackage{tikz-cd}
\usepackage{tikz}
\usepackage{pgfplots}
\usepackage{faktor}
\usepackage{graphicx,nicefrac}
\usepackage[nottoc]{tocbibind}
\usepackage{microtype}
\usepackage{capt-of}
\usepackage{hyperref}
\usepackage{doi}

\pgfplotsset{compat=newest}

\newtheorem{theorem}{Theorem}[section]

\newtheorem{corollary}[theorem]{Corollary}

\newtheorem{definition}[theorem]{Definition}

\newtheorem{lemma}[theorem]{Lemma}

\newtheorem{proposition}[theorem]{Proposition}
\newtheorem{remark}[theorem]{Remark}

\numberwithin{equation}{section}
\newenvironment{proof}[1][Proof]{\noindent\textbf{#1.} }{\ \rule{0.5em}{0.5em}}
\def\func#1{\mathop{\rm #1}}  

\newcommand{\R}{\mathbb{R}}
\newcommand{\C}{\mathbb{C}}
\renewcommand{\O}{\mathbb{O}}

\newcommand{\SU}{\mathrm{SU}}
\newcommand{\GL}{\mathrm{GL}}
\newcommand{\Gtwo}{G_2}

\newcommand{\id}{\mathrm{id}}
\newcommand{\Aut}{\mathrm{Aut}}
\newcommand{\End}{\mathrm{End}}

\newcommand{\Span}{\mathrm{Span}}

\newcommand{\Spec}{\mathrm{Spec}}
\newcommand{\tr}{\mathrm{tr}}
\newcommand{\ImO}{\mathrm{Im}\mathbb{O}}

\renewcommand{\Im}{\mathrm{Im}}

\newcommand{\tens}{\otimes}
\newcommand{\tensor}{\otimes}

\newcommand{\ip}[2]{\left\langle #1,#2\right\rangle}

\newcommand{\one}{\mathbf{1}}
\newcommand{\seven}{\mathbf{7}}
\newcommand{\twelve}{\mathbf{12}}
\newcommand{\fourteen}{\mathbf{14}}
\newcommand{\twentyseven}{\mathbf{27}}
\newcommand{\eight}{\mathbf{8}}
\newcommand{\six}{\mathbf{6}}
\newcommand{\three}{\mathbf{3}}
\newcommand{\threebar}{\overline{\mathbf{3}}}

\newcommand{\Vhat}{\widehat{V}}
\newcommand{\ehat}[1]{\hat{e}_{#1}}
\newcommand{\what}{\hat{w}}
\newcommand{\zhat}{\hat{z}}
\newcommand{\vhat}{\hat{v}}
\newcommand{\uhat}{\hat{u}}

\title{Octonionic structure operator and its right spectrum}
\author{Sergey Grigorian \\
School of Mathematical \& Statistical Sciences\\
University of Texas Rio Grande Valley\\
Edinburg, TX 78539\\
USA}       
\date{June 6, 2026}

\begin{document}
\maketitle

\begin{abstract}
We study a canonical \(G_2\)-equivariant operator
\[
h:\mathbb O\otimes_{\mathbb R}V\longrightarrow \mathbb O\otimes_{\mathbb R}V
\]
defined using only octonion multiplication, where \(V\) is the standard \(7\)-dimensional \(G_2\)-module. We first compute its ordinary real spectrum using the \(G_2\)-decomposition of \(\mathbb O\otimes_{\mathbb R}V\). We then analyze the octonionic right-eigenvalue problem \(h(\what)=\what\lambda\) for $\lambda \in \O$. After fixing a complex slice \(\mathbb R\oplus\mathbb R\uhat\subset\mathbb O\), the problem becomes a real spectral problem for \(H_{u,Q}=h-QR_{\uhat}\), whose residual symmetry is \(\SU(3)\). The resulting \(\SU(3)\)-block decomposition yields two explicit spectral loci in each slice: a quartic curve and a circle. The equations that define the loci are independent of the slice and the full spectrum is obtained by allowing $\uhat$ to vary.
\end{abstract}

\noindent\textbf{Keywords.}
Octonions; \(G_2\)-representations; right eigenvalues; octonionic spectrum; nonassociative linear algebra; \(\SU(3)\)-decomposition; operator pencils; spectral alignment.

\section{Introduction}

The octonions \(\mathbb{O}\) occupy a distinctive place in geometry and algebra. They provide the linear algebra underlying \(G_2\) and \(\mathrm{Spin}(7)\) structures, furnish the \(7\)-dimensional cross product, and appear in a range of exceptional constructions from calibrated geometry to Jordan-theoretic models of exceptional symmetry \cite{Baez2002Octonions,BrownGray1967VectorCrossProducts,Bryant1987ExceptionalHolonomy,HarveyLawson1982Calibrated,SalamonWalpuski2017NotesOctonions,SpringerVeldkamp2000OctonionsJordanExceptional}. At the same time, linear algebra over \(\mathbb{O}\) is markedly subtler than in the associative setting. Questions that in the real, complex, or quaternionic cases are routine, such as the formulation of eigenvalue problems, orthogonality of eigenspaces, or the relation between Hermitian structure and spectral data, become substantially more delicate over \(\mathbb{O}\). This has led to a small but important literature on octonionic matrices and their eigenvalue theories, particularly for Hermitian \(2\times 2\) and \(3\times 3\) matrices and for exceptional Jordan-algebraic variants \cite{DrayJaneskyManogue2000NonRealEigenvalues,DrayManogue1998OctonionicEigenvalue,DrayManogue1999ExceptionalJordan,DrayManogueOkubo2002OrthonormalEigenbases,GillowWilesDray2010ImaginaryEigenvalues,DeLeoDucati2012OctonionicEigenvalue,Okubo1999Symmetric3x3Octonionic}.

The purpose of this paper is to study a different but natural octonionic spectral problem, arising not from a small octonionic matrix algebra but from the \(G_2\)-representation
\begin{equation}\label{eq:intro-W}
W=\mathbb{O}\otimes_{\mathbb{R}}V,
\end{equation}
where \(V\) is a Euclidean \(7\)-space identified with $\ImO$ as a $G_2$-module. On \(W\) there is a canonically defined \(G_2\)-equivariant operator
\[
h:W\to W
\]
obtained from the octonion multiplication tensor. In this sense \(h\) may be viewed as an octonionic structure operator: it is built from the same algebraic data that determines octonion multiplication, but acts on the octonionic extension \(W\) rather than on \(\O\) itself.

A central reason for focusing on \emph{right} eigenvalues is that this formulation makes the role of nonassociativity especially clear. In the quaternionic setting, the natural framework is that of right \(\mathbb H\)-modules and right-linear operators. In this associative but noncommutative context, Hermitian (self-adjoint) operators still satisfy a largely classical spectral theory: in particular, Hermitian quaternionic matrices have real right eigenvalues, and the corresponding spectral theorem closely parallels the complex case \cite{AlpayColomboKimsey2016Spectral,Baker1999RightEigenvalues,FarenickPidkowich2003Spectral,Zhang1997QuaternionsMatrices}. Thus noncommutativity alone does not force the appearance of non-real right-spectrum for Hermitian operators. By formulating the octonionic problem in the directly analogous right-linear manner, we make the contrast with the quaternionic case as sharp as possible. The resulting behavior of the right spectrum should therefore be understood as a genuinely nonassociative spectral phenomenon, rather than as a byproduct of noncommutativity alone.

Our first goal is to determine the ordinary real spectrum of \(h\), which may be regarded as a self-adjoint real-linear operator. Since \(h\) is \(G_2\)-equivariant, its action is constrained by the decomposition of \(W\) into irreducible \(G_2\)-summands. This leads to a finite-dimensional representation-theoretic calculation and yields a complete description of the real eigenvalues of \(h\). Our second goal is to study the octonionic right-eigenvalue problem
\begin{equation}\label{eq:intro-right-eigenvalue}
h(\vhat)=\vhat\lambda,\qquad \lambda\in \mathbb{O}, \vhat \in W,
\end{equation}
which is more subtle. One should not expect a field-based spectral formalism analogous to the complex case. Rather, the equation \(h(\vhat)=\vhat\lambda\) expresses the compatibility between two canonical structures on \(W\): the operator \(h\) itself and right multiplication by \(\lambda\). Its interest lies in the structure of the resulting alignment locus
\begin{equation}\label{eq:intro-alignment-locus}
\{\lambda\in \mathbb{O}: \ker(h-R_\lambda)\neq 0\},
\end{equation}
and in how this locus reflects the exceptional algebraic geometry of \(\mathbb O\). In contrast with the associative cases, right-eigenvalues over \(\mathbb{O}\) need not reduce to a discrete real spectral set, and continuous families of non-real eigenvalues can arise even for Hermitian operators in the appropriate sense \cite{DrayJaneskyManogue2000NonRealEigenvalues,DrayManogue1998OctonionicEigenvalue,GillowWilesDray2010ImaginaryEigenvalues}. Continuous non-real octonionic eigenvalue families are therefore not unprecedented; what is new here is that they arise for a canonical \(G_2\)-equivariant operator and admit a symmetry-driven description.

More concretely, after fixing a unit imaginary octonion \(\uhat\in \operatorname{Im}\mathbb{O}\), we analyze right-eigenvalues lying in the complex line
\begin{equation}\label{eq:intro-complex-slice}
\mathbb{R}\oplus \mathbb{R}\uhat \subset \mathbb{O}.
\end{equation}
Writing \(\lambda=\mu+Q\uhat\), for \(Q\in\R\), the right-eigenvalue equation can be rewritten as an ordinary real eigenvalue problem for a deformed operator
\begin{equation}\label{eq:intro-HuQ}
H_{u,Q}=h-Q\,R_{\uhat},
\end{equation}
where \(R_{\uhat}\) denotes right multiplication by \(\uhat\). The point is that \(H_{u,Q}\) is no longer \(G_2\)-equivariant, but it remains equivariant under the stabilizer of \(\uhat\), namely \(\mathrm{SU}(3)\subset G_2\). This reduces the octonionic spectral problem to a blockwise analysis on the \(\mathrm{SU}(3)\)-isotypic decomposition of \(W\). In this way, the non-real spectral geometry of \(h\) becomes accessible by a combination of representation theory and explicit matrix calculation.

The resulting picture is structurally different from the standard spectral theory of self-adjoint operators over \(\mathbb{R}\) or \(\mathbb{C}\). On the one hand, the real spectrum of \(h\) is discrete and determined by \(G_2\)-equivariance. On the other hand, the octonionic right-spectrum contains continuous families inside the complex slices \(\mathbb{R}\oplus \mathbb{R}\uhat\), and these families can be described explicitly in terms of low-dimensional \(\mathrm{SU}(3)\)-equivariant blocks. Thus the operator \(h\) provides a concrete example in which exceptional symmetry both constrains and reveals genuinely nonclassical spectral behavior.

This reduction may also be viewed through the lens of operator pencils.  For a fixed unit imaginary direction \(\uhat\), the right-eigenvalue equation is equivalent to the condition that the two-parameter real-linear pencil
\begin{equation}\label{eq:intro-pencil}
h-\mu\id-Q R_{\uhat}
\end{equation}
has non-trivial kernel. Thus, rather than asking for the roots of a one-parameter characteristic polynomial, the octonionic right-spectrum is described by determinant loci in the \((\mu,Q)\)-plane.  The \(\SU(3)\)-equivariant decomposition of \(W\) makes this pencil block diagonal, and the spectral curves obtained below arise as the corresponding block determinant equations.  This is analogous in spirit to generalized eigenvalue and matrix-pencil problems \cite{GohbergLancasterRodman1982MatrixPolynomials,TisseurMeerbergen2001QuadraticEigenvalue}, but with an exceptional feature: the pencil is forced by octonionic right multiplication and its symmetry reduction \(G_2\to \SU(3)\).

The paper makes the following contributions.
\begin{enumerate}
    \item We define a natural \(G_2\)-equivariant operator \(h\) on the octonionic extension \(W=\mathbb{O}\otimes_{\mathbb{R}}V\), characterized intrinsically as an operator of left octonionic multiplication type induced by the octonion product.
    \item We determine the real spectrum of \(h\) by decomposing \(W\) into irreducible \(G_2\)-representations and computing the scalar or block action of \(h\) on each summand.
    \item We reformulate the octonionic right-eigenvalue problem for \(h\) as a family of \(\mathrm{SU}(3)\)-equivariant real spectral problems \(H_{u,Q}\what=\mu \what\) for $\what \in W$. 
    \item We analyze these \(\mathrm{SU}(3)\)-equivariant blocks explicitly and show that, in each complex slice, the non-real right-spectrum is governed by a quartic locus from the \(\one^{\oplus4}\)-block and a circle from the \(\eight^{\oplus2}\)-block.
    \item We interpret the resulting spectral loci as a symmetry-controlled instance of genuinely octonionic spectral behavior, distinct from but related to the earlier Hermitian-matrix literature.
\end{enumerate}

From the perspective of existing work, our emphasis differs from the classical studies of small octonionic Hermitian matrices. Those works focus on matrix models, determinant identities, orthonormal eigenbases, and exceptional Jordan analogues \cite{DrayManogue1998OctonionicEigenvalue,DrayManogue1999ExceptionalJordan,DrayJaneskyManogue2000NonRealEigenvalues,DrayManogueOkubo2002OrthonormalEigenbases,Okubo1999Symmetric3x3Octonionic}. By contrast, the present paper begins with a canonical operator attached to the \(G_2\)-module \(\mathbb{O}\otimes_{\mathbb{R}}V\) and exploits representation theory as the primary organizing principle. In particular, the continuous non-real spectral families that appear here are not inserted ad hoc through a chosen matrix model; rather, they emerge from the interaction between octonionic right multiplication and the branching from \(G_2\) symmetry to \(\mathrm{SU}(3)\) symmetry after fixing a complex slice.

This point of view also suggests a geometric interpretation. Since the linear algebra of \(\operatorname{Im}\mathbb{O}\) underlies \(G_2\)-geometry, one may regard \(h\) as the pointwise model for a natural operator on the octonion bundle of a \(G_2\)-manifold, or more generally on octonion-valued tensor fields. We do not pursue such manifold-level constructions here beyond brief remarks, but the algebraic results obtained below indicate that the spectral behavior of such operators may encode geometric information not visible in ordinary real or complex formulations \cite{Bryant1987ExceptionalHolonomy,Grigorian2017G2StructuresOctonionBundles,Karigiannis2019IntroductionG2}.

The paper is organized as follows. In Section~\ref{sec:background} we review the necessary background on octonions, the group \(G_2\), and the decomposition of the \(G_2\)-module \(W=\mathbb{O}\otimes_{\mathbb{R}}V\). In Section~\ref{sec:operator-h} we define the operator \(h\) and establish its basic algebraic properties. Section~\ref{sec:real-spectrum} computes the real spectrum of \(h\) using the \(G_2\)-irreducible decomposition of \(W\). In Section~\ref{sec:right-spectrum-reduction} we reformulate the octonionic right-eigenvalue problem in terms of the operators \(H_{u,Q}\) and the \(\mathrm{SU}(3)\)-decomposition associated to a fixed unit \(\uhat\in \operatorname{Im}\mathbb{O}\). Section~\ref{sec:block-analysis} contains the blockwise analysis of these operators, and Section~\ref{sec:main-right-spectrum} assembles the results into the description of the octonionic right-spectrum of \(h\). We conclude with remarks on the geometry of the spectral loci and possible extensions to \(G_2\)-manifolds.

\noindent\textbf{Acknowledgements.} I thank Spiro Karigiannis for multiple discussions on the \(h\) operator and also Jonathan D.H. Smith for his helpful comments. AI tools were used for editing and formatting assistance. Calculations were performed using \emph{Maple}.

\section{Background on octonions, $G_2$, and the octonionic extension}\label{sec:background}

In this section we recall the basic algebraic and representation-theoretic facts needed later. Standard references include \cite{Baez2002Octonions,BrownGray1967VectorCrossProducts,Karigiannis2019IntroductionG2,SalamonWalpuski2017NotesOctonions,SpringerVeldkamp2000OctonionsJordanExceptional}.

\subsection{Octonions and the cross product}

Let \(\mathbb{O}\) denote the algebra of octonions, equipped with its standard conjugation
\[
x \mapsto \bar x,
\]
and norm
\[
|x|^2 = x\bar x = \bar x x.
\]
We write
\[
\operatorname{Re}\mathbb{O} \cong \mathbb{R},
\qquad
\operatorname{Im}\mathbb{O}
=
\{x\in \mathbb{O} : \bar x = -x\},
\]
so that
\[
\mathbb{O} = \mathbb{R}\oplus \operatorname{Im}\mathbb{O}.
\]
The Euclidean inner product on \(\mathbb{O}\) is given by
\[
\langle x,y\rangle = \operatorname{Re}(x\bar y),
\]
and restricts to a Euclidean metric \(g\) on \(\operatorname{Im}\mathbb{O}\), which is a real vector space of dimension \(7\).

For \(u,v\in \ImO\), the octonion product decomposes as
\begin{equation}\label{eq:octonion-product-imaginary}
uv = -g(u,v) + u\times v,
\end{equation}
where \(u\times v\in \ImO\) defines the \(7\)-dimensional cross product. Equivalently, the cross product is characterized by
\begin{equation}\label{eq:cross-product-phi}
g(u\times v,w)=\varphi(u,v,w),
\end{equation}
where \(\varphi\in \Lambda^3 (\ImO)^*\) is the positive \(3\)-form associated to the octonion algebra structure. In particular,
\[
u\times v = -\,v\times u,
\qquad
g(u\times v,u)=g(u\times v,v)=0,
\]
and
\[
|u\times v|^2 = |u|^2|v|^2-g(u,v)^2.
\]

The data \((g,\varphi)\) determine the octonion multiplication on \(\mathbb{R}\oplus \ImO\) up to the usual overall sign convention. Conversely, a choice of positive \(3\)-form \(\varphi\) on an oriented Euclidean \(7\)-space $\Vhat$ determines the corresponding cross product and hence the octonion multiplication law on \(\mathbb{R}\oplus \Vhat\).
Given an orthonormal basis \(\{\ehat{1},...,\ehat{7}\}\) on \(\Vhat\), multiplication in components becomes: \begin{equation}\label{eq:octonion-multiplication-components}
    \ehat{i} \ehat{j} = -g_{ij}1+\varphi_{ij}{}^k \ehat{k},
\end{equation}
where we are using summation convention.

\begin{remark}
When working with indexed expressions, we use the Einstein summation
convention: repeated indices, one raised and one lowered, are summed over.
After fixing an orthonormal basis, we freely identify vectors and covectors
using the metric, and hence often write all indices lowered. In that case,
repeated indices are still understood to be summed over.
\end{remark}

\subsection{The groups $G_2$ and $\mathrm{SU}(3)$}

Recall that
\[
G_2=\operatorname{Aut}(\mathbb O).
\]
Every automorphism fixes the identity in \(\mathbb O\), preserves conjugation, and hence preserves the decomposition
\[
\mathbb O=\mathbb R\oplus \operatorname{Im}\mathbb O.
\]
Thus \(G_2\) acts faithfully on \(\operatorname{Im}\mathbb O\), preserving the metric \(g\), the cross product \(\times\), and the positive \(3\)-form \(\varphi\). In this way \(\ImO\) is identified with the standard \(7\)-dimensional irreducible representation of \(G_2\) \cite{SalamonWalpuski2017NotesOctonions,Karigiannis2019IntroductionG2}.

\begin{lemma} \label{lem:G2-stabilizer-SU3}
Let \(\uhat\in \operatorname{Im}\mathbb O\) be a unit vector. The stabilizer of \(\uhat\) in \(G_2\) is isomorphic to \(\mathrm{SU}(3)\).
\end{lemma}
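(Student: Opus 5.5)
The plan is to realize the stabilizer as the group of unitary transformations of a natural complex structure that also preserve a complex volume form. Write \(K=\{g\in G_2: g\uhat=\uhat\}\) and \(U=\uhat^{\perp}\subset\ImO\), a real \(6\)-dimensional subspace. Every \(g\in K\) preserves \(g\), \(\times\) and \(\uhat\), hence preserves \(U\). So restriction to \(U\) gives a homomorphism \(K\to \mathrm{O}(U)\). It is injective because \(\ImO=\R\uhat\oplus U\) and \(g\) fixes \(\uhat\).

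Next define \(J:U\to U\) by \(J(x)=\uhat\times x\). For \(x\perp\uhat\), \(\uhat x=\uhat\times x\) by \eqref{eq:octonion-product-imaginary}, and \(\uhat\times x\) is orthogonal to \(\uhat\), so \(J\) maps \(U\) to itself. Alternativity gives \(\uhat(\uhat x)=(\uhat\uhat)x=-x\), so \(J^2=-1\). Moreover \(|J x|=|x|\) by the norm identity for the cross product, so \(J\) is orthogonal. Thus \((U,g,J)\) is a \(3\)-dimensional Hermitian space with Kähler form \(\omega=\iota_{\uhat}\varphi|_U\).

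Define the complex \(3\)-form \(\Omega=\varphi|_U + i\,\psi|_U\), where \(\psi=\ast\varphi\), or equivalently \(\Omega(x,y,z)=\varphi(x,y,z)-i\varphi(Jx,y,z)\). A direct check in an adapted basis (\(\ehat{1}=\uhat\), with \(\varphi=\ehat{1}\wedge\omega+\mathrm{Re}\,\Omega\)) shows that \(\Omega\) is a nonzero \((3,0)\)-form. Any \(g\in K\) commutes with \(J\), since it preserves \(\times\) and \(\uhat\), and it preserves \(\varphi\). Hence \(g|_U\in \mathrm{U}(3)\) preserves \(\mathrm{Re}\,\Omega\), and therefore \(\Omega\), which forces \(\det_{\C}(g|_U)=1\). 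This gives an injective map \(K\hookrightarrow \SU(3)\).

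For surjectivity, take \(A\in\SU(U,J)\) and extend it by \(\uhat\mapsto\uhat\). Then \(A\) preserves \(g\), \(\omega\) and \(\mathrm{Re}\,\Omega\), so it preserves \(\varphi=\uhat^\flat\wedge\omega+\mathrm{Re}\,\Omega\). Since \(\varphi\) and \(g\) determine the cross product via \eqref{eq:cross-product-phi}, they determine the octonion product, so \(A\in\Aut(\O)=G_2\). The main obstacle is the decomposition \(\varphi=\uhat^\flat\wedge\omega+\mathrm{Re}\,\Omega\), with \(\Omega\) of type \((3,0)\). I would verify it by choosing a \(J\)-adapted basis: pick \(\ehat{2}\perp\uhat\), set \(\ehat{3}=\uhat\ehat{2}\), pick \(\ehat{4}\) orthogonal to \(\uhat,\ehat{2},\ehat{3}\), and fill in the remaining vectors by products. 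Then I would read off \(\varphi\) in the standard form \eqref{eq:octonion-multiplication-components}. An alternative is to note that \(G_2\) acts transitively on \(S^6\), so \(K\) is a \(14-6=8\)-dimensional closed subgroup of \(\SU(3)\). Together with the connectedness of \(K\) (from the long exact sequence of \(K\to G_2\to S^6\), using \(\pi_1(S^6)=0\) and \(G_2\) connected), this gives \(K=\SU(3)\) without proving surjectivity by hand.
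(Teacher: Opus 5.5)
Your proposal is correct, and its first half follows the paper's proof exactly. Both use the complex structure \(J=\uhat\times\cdot\) on \(\uhat^\perp\), observe that stabilizer elements are \(J\)-linear isometries, and note that they preserve the complex volume form, which gives an injective homomorphism \(K\hookrightarrow\SU(3)\).

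The paper stops at that point and defers the isomorphism \(K\cong\SU(3)\) to Harvey and Salamon--Walpuski. You go further and prove surjectivity, in either of two ways:
\begin{enumerate}
\item Extend any \(A\in\SU(\uhat^\perp,J)\) by \(\uhat\mapsto\uhat\) and \(1\mapsto 1\), check that it preserves \(\varphi=\uhat^\flat\wedge\omega+\mathrm{Re}\,\Omega\), and conclude that it preserves the cross product and hence the octonion product.
\item Use the dimension count \(14-6=8\).
\end{enumerate}
The first route is the more self-contained one. Its only input is the type computation for \(\varphi|_U\) in an adapted (basic-triple) basis. The second route imports transitivity of \(G_2\) on \(S^6\) and \(\dim G_2=14\), which are usually proved with the same basic-triple machinery.

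The second route also does not need connectedness of \(K\). The image of the compact group \(K\) is a closed \(8\)-dimensional subgroup of the connected \(8\)-dimensional group \(\SU(3)\), so it is already all of \(\SU(3)\).

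There is one slip to fix. \(\psi|_U\) is a \(4\)-form, so \(\Omega=\varphi|_U+i\,\psi|_U\) does not make sense. The correct imaginary part is the one in your ``equivalently'' formula, \(\mathrm{Im}\,\Omega(x,y,z)=-\varphi(Jx,y,z)\). Up to a convention-dependent sign, this is \((\iota_{\uhat}\psi)|_U\).

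Finally, the splitting \(\varphi=\uhat^\flat\wedge\omega+\varphi|_U\) is automatic from \(\ImO=\R\uhat\oplus U\). The real content of what you call the main obstacle is only that \(\varphi|_U\) has type \((3,0)+(0,3)\) with respect to \(J\). That is exactly what makes \(\varphi|_U\) invariant under \(\SU(3)\) rather than merely under \(\mathrm{U}(3)\)-elements of determinant one up to phase.
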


\begin{proof}
Since \(\uhat\) is a unit imaginary octonion, the orthogonal complement \(\uhat^\perp\subset \ImO\) carries a complex structure
\[
J_u(\vhat)=\uhat\times \vhat,
\]
because \(J_u^2=-\mathrm{id}\) on \(\uhat^\perp\). The stabilizer of \(\uhat\) in \(G_2\) preserves \(g\), \(\varphi\), and \(J_u\), hence acts by complex linear isometries on \(\uhat^\perp\cong \mathbb C^3\). Moreover, it preserves the induced complex volume form, so the stabilizer is isomorphic to \(\mathrm{SU}(3)\); see, for example, \cite[Chapter III]{Harvey1990SpinorsCalibrations} or \cite[Section 2]{SalamonWalpuski2017NotesOctonions}.
\end{proof}

Accordingly, after fixing \(\uhat\), the complexified \(G_2\)-representation \(\mathbf{7}_\C\) restricts to \(\mathrm{SU}(3)\) as
\[
\seven_\C\cong \mathbb C\uhat\oplus (\uhat^\perp)_\mathbb C
\cong \mathbf{1}\oplus \mathbf{3}\oplus \overline{\mathbf{3}}
\]
or equivalently, as a real representation,
\[
\seven \cong \mathbf{1}\oplus \mathbf{6}
\]
over \(\mathbb R\).

\subsection{The octonionic extension $W=\mathbb{O}\otimes_{\mathbb{R}}V$}

Let \(V\) be an oriented Euclidean \(7\)-dimensional real vector space, and fix a \(G_2\)-equivariant isometric identification
\[
V \cong \widehat V := \operatorname{Im}\mathbb O.
\]
Via this identification, we transport the metric \(g\), the positive \(3\)-form \(\varphi\), and the cross product from \(\widehat V\) to \(V\). We emphasize, however, that \(V\) is only isomorphic to $\Vhat$ as a $G_2$-module, not as an algebra, and hence octonion multiplication remains only on \(\widehat V\subset \mathbb O\).
We now pass to the octonionic extension
\begin{equation}\label{eq:W-definition}
W:=\mathbb{O}\otimes_{\mathbb{R}}V.
\end{equation}
This is a real vector space of dimension \(56\), naturally equipped with both left and right octonion multiplication on the first factor. For \(a\in \mathbb{O}\) and \(x\otimes v\in W\), define
\begin{equation}\label{eq:left-right-multiplication-W}
L_a(x\otimes v) := (ax)\otimes v,
\qquad
R_a(x\otimes v) := (xa)\otimes v.
\end{equation}
These are well-defined real-linear endomorphisms of \(W\). Since \(\mathbb{O}\) is nonassociative, the operators \(L_a\) and \(R_b\) do not in general satisfy the same formal identities as in the associative setting; nevertheless both actions will play an essential role below.

The Euclidean inner product on \(W\) is the tensor product metric
\begin{equation}\label{eq:W-inner-product}
\langle x\otimes v,\; y\otimes w\rangle_W
=
\langle x,y\rangle_{\mathbb O}\, g(v,w),
\end{equation}
extended bilinearly over the real numbers. With respect to this metric, the \(G_2\)-action on \(\mathbb{O}\) and \(V\) induces an orthogonal action on \(W\), given by
\begin{equation}\label{eq:diagonal-G2-action-W}
\gamma\cdot (x\otimes v) := \gamma(x)\otimes \gamma(v),
\qquad \gamma\in G_2.
\end{equation}
Since \(G_2\) fixes the real line in \(\mathbb{O}\) and acts on \(V\cong \operatorname{Im}\mathbb{O}\) in the standard way, this action preserves all the structures introduced above.

It is often useful to identify \(W\) using the decomposition
\[
\mathbb{O} \cong \mathbb{R}\oplus \Vhat.
\]
Then
\begin{equation}\label{eq:W-as-V-plus-tensor}
W \cong (\mathbb{R}\oplus \Vhat)\otimes V
\cong V \oplus (\Vhat\otimes V).
\end{equation}
This makes transparent the link between the octonionic structure on \(\mathbb{O}\) and the tensor algebra of the standard \(G_2\)-module \(V\).

\subsection{$G_2$-decomposition of $W$}

We next record the decomposition of \(W\) into irreducible \(G_2\)-modules. The required representation-theoretic splittings are standard; see, for example, \cite[Sections 2 and 4]{Karigiannis2019IntroductionG2}, \cite[Section 2]{SalamonWalpuski2017NotesOctonions}, or the classical account in \cite{Bryant1987ExceptionalHolonomy}. Using the fixed \(G_2\)-module identification \(\widehat V\cong V\), we regard
\[
W\cong V\oplus(V\otimes V).
\]
It thus suffices to decompose \(V\otimes V\). For the standard \(7\)-dimensional representation of \(G_2\), one has
\[
V\otimes V
=
S^2V \oplus \Lambda^2V,
\]
with
\[
S^2V \cong \mathbf{1}\oplus \mathbf{27},
\qquad
\Lambda^2V \cong \mathbf{7}\oplus \mathbf{14}.
\]
Accordingly,
\[
W
\cong
\mathbf{7}\oplus
\bigl(\mathbf{1}\oplus \mathbf{27}\oplus \mathbf{7}\oplus \mathbf{14}\bigr),
\]
that is,
\begin{equation}\label{eq:W-as-G2-summands}
W \cong \mathbf{1}\oplus \mathbf{7}\oplus \mathbf{7}\oplus \mathbf{14}\oplus \mathbf{27}.
\end{equation}

We briefly identify these summands.

\begin{itemize}
    \item The \(\mathbf{1}\) summand is the metric line in \(S^2V\), spanned by the tensor \(g\).
    \item One copy of \(\mathbf{7}\) is the initial factor \(V\subset W\cong V\oplus(V\otimes V)\).
    \item The second copy of \(\mathbf{7}\) is the \(\mathbf{7}\)-summand in \(\Lambda^2V\), equivalently the image of the cross product map
    \[
    \Lambda^2V \to V,
    \qquad
    u\wedge v \mapsto u\times v.
    \]
    \item The \(\mathbf{14}\) summand is the kernel of the cross product map \(\Lambda^2V\to V\), and may be identified with the adjoint representation \(\mathfrak{g}_2\).
    \item The \(\mathbf{27}\) summand is the traceless symmetric part
    \[
    S^2_0V = \{T\in S^2V : \operatorname{tr}_g T = 0\}.
    \]
\end{itemize}

This decomposition will be used throughout the paper. It provides the basic \(G_2\)-representation-theoretic framework for the spectral analysis of \(h\), and it will later be refined after fixing a unit vector and restricting to the stabilizer \(\mathrm{SU}(3)\subset G_2\).

\section{Definition and properties of the operator $h$}\label{sec:operator-h}

We now introduce the operator
\[
h:W\to W,
\qquad
W=\mathbb O\otimes_{\mathbb R}V,
\]
which will be the main object of the paper. 

\subsection{Definition of $h$}

Let \(\{e_1,\dots,e_7\}\) be a basis of \(V\) and \(\{\ehat{1},\dots,\ehat{7}\}\) be corresponding basis of $\Vhat$. Also let \(\{\varepsilon^1,\dots,\varepsilon^7\}\) be the dual basis of \(V^*\). For each $i$ and $j$, define 
\begin{equation}\label{eq:h-lowered-components}
h_{ij}=\ehat{i}\ehat{j}=-g_{ij}1+\varphi_{ij}{}^{k}\ehat{k} \in \O.    
\end{equation}
Raising the first index using $g^{ij}$, we can then also define: 
\begin{equation}\label{eq:h-raised-components}
    h^i{}_j=-\delta^i{}_j1 + \varphi^i{}_j{}^k\ehat{k} \in \O
\end{equation}
We then define the \(\mathbb O\)-valued endomorphism tensor
\begin{equation}\label{eq:h-tensor-definition}
h
:=
h^i{}_j\otimes e_i \ \otimes \varepsilon^j
\;\in\;
\mathbb O\otimes V\otimes V^*
\cong
\mathbb O\otimes \operatorname{End}(V).
\end{equation}
Via left multiplication on the \(\mathbb O\)-factor, this tensor induces a real-linear operator
\[
h\in \operatorname{End}_{\mathbb R}(W).
\]

More explicitly, if
\[
\what=\what^j\otimes e_j\in W,
\qquad
\what^j\in \mathbb O,
\]
then \(h(\what)\) is defined by
\begin{equation} \label{eq:hwhat}
    h(\what)=(h^i{}_j\what^j)\tens e_i.
\end{equation}

Thus \(h\) is of left octonionic multiplication type: it is obtained by contracting the \(V\)-index against the octonion multiplication tensor and then acting by left multiplication on the \(\mathbb O\)-factor.

Although this formula is written using a basis, it is tensorial, and hence independent of the choice of basis. 
\begin{remark}
    It is important to note that component-wise, $h$ acts by multiplication by $\ehat{i}\ehat{j}$ . Given orthonormal bases, and identifying raised and lowered indices using the metric, \eqref{eq:hwhat} becomes 
    \begin{equation}\label{eq:hwhat-ortho}
         h(\what)=((\ehat{i}\ehat{j})\what_j)\tens e_i.
    \end{equation}    
Here the order of parenthesization matters due to non-associativity, since $\ehat{i},\ehat{j},\what_j \in \O$.  One can ask what would happen if we instead chose a different parenthesization, for instance $\ehat{i}(\ehat{j}\what_j)$. Indeed, that could be rewritten as $L_{\ehat{i} }L_{\ehat{j}} \what_j$ . However, the operators \(L_{\hat u}\) for \(\hat u\in \ImO \) satisfy the Clifford relation
\[
L_{\hat u}L_{\hat v}+L_{\hat v}L_{\hat u}=-2g(u,v)\id,
\]
so this alternative construction is described by the associative Clifford algebra action on \(\mathbb O\) \cite[Section 2.3]{Baez2002Octonions}. By contrast, the parenthesization in \eqref{eq:hwhat-ortho} retains the genuine octonion product and therefore encodes nonassociative information.
\end{remark}

\subsection{Self-adjointness and equivariance}

We next derive two of the basic structural properties of \(h\): self-adjointness and \(G_2\)-equivariance.

\begin{proposition}\label{prop:h-selfadjoint-equivariant}
The operator \(h\) is self-adjoint with respect to the natural Euclidean inner product on \(W\), and \(G_2\)-equivariant with respect to the diagonal action of \(G_2\) on \(W=\O\tensor_{\R}V\).
\end{proposition}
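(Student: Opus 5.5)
For self-adjointness I would work in an orthonormal basis, with indices identified via $g$, using the component formula \eqref{eq:hwhat-ortho}. Take $\what=\what_j\otimes e_j$ and $\zhat=\zhat_i\otimes e_i$. The inner product \eqref{eq:W-inner-product} gives
\[
\langle h(\what),\zhat\rangle_W=\sum_{i,j}\bigl\langle (\ehat{i}\ehat{j})\what_j,\ \zhat_i\bigr\rangle_{\mathbb O}.
\]
The standard octonion identity I need is that the adjoint of $L_a$ is $L_{\bar a}$, i.e. $\langle ax,y\rangle=\langle x,\bar a y\rangle$. This follows from the norm being multiplicative and the polarization of $|ax|^2=|a|^2|x|^2$.

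The conjugate of the coefficient is $\overline{\ehat{i}\ehat{j}}=\bar{\ehat{j}}\,\bar{\ehat{i}}=\ehat{j}\ehat{i}$, since both factors are imaginary. Equivalently, $\overline{h_{ij}}=-g_{ij}-\varphi_{ijk}\ehat{k}=-g_{ji}+\varphi_{jik}\ehat{k}=h_{ji}$ by antisymmetry of $\varphi$. This gives
\[
\langle h(\what),\zhat\rangle_W=\sum_{i,j}\langle \what_j,(\ehat{j}\ehat{i})\zhat_i\rangle_{\mathbb O}=\langle \what,h(\zhat)\rangle_W
\]
after relabeling the indices. The one point to get right is that each term involves a single left multiplication by the octonion $h_{ij}$. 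No associativity is used, because $h_{ij}$ is first computed as an element of $\mathbb O$ and only then applied.

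For equivariance I would avoid component calculations and write $h$ invariantly. Viewing $h$ as the element \eqref{eq:h-tensor-definition} of $\mathbb O\otimes V\otimes V^*$, it is the image of the tensor $\sum_{i,j}(\ehat{i}\ehat{j})\otimes e_i\otimes e_j$ under $g$-identifications. This tensor is the composition of two maps: the $G_2$-invariant element $\sum_i \ehat{i}\otimes e_i\in \Vhat\otimes V$, which is the identification map $\iota:V\to\Vhat$ and is invariant because that identification is $G_2$-equivariant and isometric, and the octonion multiplication $\Vhat\otimes\Vhat\to\mathbb O$.

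Concretely, for $\gamma\in G_2$, $\gamma(x)=\gamma x$ on $\mathbb O$, and $\iota\gamma=\gamma\iota$, I would check
\[
h(\gamma\cdot\what)=\sum_{i,j}\bigl((\ehat{i}\ehat{j})\gamma(\what_j')\bigr)\otimes e_i,
\]
where $\gamma\cdot\what=\sum_k\gamma(\what_k)\otimes\gamma(e_k)$ and the components $\what'_j$ come from expanding $\gamma(e_k)$ in the basis $\{e_j\}$. I would then use the invariance $\sum_{i,j} \gamma(\ehat{i})\gamma(\ehat{j})\otimes\gamma(e_i)\otimes\gamma(e_j)=\sum_{i,j}\ehat{i}\ehat{j}\otimes e_i\otimes e_j$. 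This holds because $\gamma$ maps orthonormal bases to orthonormal bases compatibly in $V$ and $\Vhat$, and the tensor is basis-independent. Combined with $\gamma(ab)=\gamma(a)\gamma(b)$, this gives $h(\gamma\cdot\what)=\gamma\cdot h(\what)$.

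The only real obstacle is bookkeeping. I must make sure that $\gamma$ acts simultaneously on the $\mathbb O$-coefficient and on $e_i$, and that the automorphism property is applied to the full product $(\ehat{i}\ehat{j})\what_j$. The invariance of $h$ uses exactly that $G_2$ preserves both $g$ and the multiplication, equivalently $\varphi$.
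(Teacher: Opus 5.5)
Your proposal is correct and follows essentially the same route as the paper. Self-adjointness uses $\langle ax,y\rangle_{\O}=\langle x,\bar a\,y\rangle_{\O}$ together with $\overline{\ehat{i}\ehat{j}}=\ehat{j}\ehat{i}$, and equivariance uses $G_2$-invariance of the coefficient tensor $\ehat{i}\ehat{j}\otimes e_i\otimes e_j$ (automorphism property plus orthogonal change of basis); you are, if anything, more explicit than the paper in applying the automorphism property to the full product $(\ehat{i}\ehat{j})\what_j$ when passing from invariance of the tensor to equivariance of the operator, a step the paper leaves implicit.
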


\begin{proof}
Let
\[
\what=\what^j\tensor e_j,
\qquad
\zhat=\zhat^i\tensor e_i
\]
be elements of \(W\), where \(\what^j,\zhat^i\in \O\). Using the inner product on \(W\) induced from \(\O\) and \(V\), together with the standard identity
\[
\ip{ax}{y}_{\O}=\ip{x}{\bar a\,y}_{\O},
\qquad a,x,y\in \O,
\]
we compute
\begin{align*}
\ip{h(\what)}{\zhat}_W
&=
\ip{h^i{}_j\what^j}{\zhat^i}_{\O} \\
&=
\ip{\what^j}{\overline{h^i{}_j}\,\zhat^i}_{\O}.
\end{align*}
Now
\[
h_{ij}=\ehat{i}\ehat{j},
\]
so since \(\ehat{i},\ehat{j}\in \ImO\), we have
\[
\overline{h_{ij}}
=
\overline{\ehat{i}\ehat{j}}
=
\overline{\ehat{j}}\,\overline{\ehat{i}}
=
\ehat{j}\ehat{i}
=
h_{ji},
\]
and similarly with a raised index. Therefore
\begin{align*}
\ip{h(\what)}{\zhat}_W
&=
 \ip{\what^j}{h^j{}_i\,\zhat^i}_{\O} \\
&=
 \ip{\what^j}{(h(\zhat))^j}_{\O}
=
\ip{\what}{h(\zhat)}_W.
\end{align*}
Hence \(h\) is self-adjoint.

For equivariance, consider first the \(\O\)-valued bilinear form
\[
h=h_{ij}\,\varepsilon^i\otimes \varepsilon^j
\in \O\otimes V^*\otimes V^*,
\qquad
h_{ij}=\ehat{i}\ehat{j}.
\]
The group \(G_2\) acts on \(\O\) by octonion automorphisms and on \(V^*\) by the dual action, so for $a\in\O$ and  $\alpha,\beta \in  V^*$,
\[
\gamma\cdot (a\otimes \alpha\otimes \beta)
=
\gamma(a)\otimes (\alpha\circ \gamma^{-1})\otimes (\beta\circ \gamma^{-1}).
\]
Since the identification \(V\cong \Vhat=\Im\O\) is \(G_2\)-equivariant, if
\[
\gamma(\ehat{i})=\Gamma^p{}_i\ehat{p},
\qquad \text{then} \qquad
\gamma(\varepsilon^i)=(\Gamma^{-1})^i{}_q\,\varepsilon^q.
\]
Hence, using the fact that $\gamma\in \Gtwo$ and hence preserves octonion multiplication, we get
\begin{align*}
\gamma\cdot h
&=
\gamma(h_{ij})\,\gamma(\varepsilon^i)\otimes \gamma(\varepsilon^j) \\
&=
\gamma(\ehat{i}) \gamma(\ehat{j})\,(\Gamma^{-1})^i{}_q(\Gamma^{-1})^j{}_r\,\varepsilon^q\otimes \varepsilon^r \\
&=
\Gamma^p{}_i\Gamma^s{}_j\,h_{ps}\,(\Gamma^{-1})^i{}_q(\Gamma^{-1})^j{}_r\,\varepsilon^q\otimes \varepsilon^r \\
&=
h_{qr}\,\varepsilon^q\otimes \varepsilon^r \\
&=h.
\end{align*}
Thus \(h\in \O\otimes V^*\otimes V^*\) is \(G_2\)-invariant.

Since the mixed tensor \(h^i{}_j=g^{ip}h_{pj}\) is obtained from \(h_{ij}\) by contraction with the \(G_2\)-invariant metric, it is also \(G_2\)-invariant. Therefore the induced operator
\[
h(\what)=(h^i{}_j\what^j)\tensor e_i
\]
is \(G_2\)-equivariant on \(W\).
\end{proof}

As an immediate consequence, the real spectral theory of \(h\) is strongly constrained by the \(G_2\)-decomposition of \(W\) from Section~\ref{sec:background}. In particular, Schur theory implies that \(h\) acts by scalars on the irreducible summands \(\mathbf 1\), \(\mathbf{14}\), and \(\mathbf{27}\), while on the isotypic component \(\mathbf 7\oplus \mathbf 7\) it is represented by a real \(2\times2\) matrix.

\begin{proposition}\label{prop:M-two-dimensional}
Let \(\mathcal M\subset \End_{\R}(W)\) denote the subspace of endomorphisms of left octonionic multiplication type. Then the \(G_2\)-invariant Hermitian subspace \(\mathcal M^{G_2}_{\mathrm{sa}}\) is two-dimensional. Equivalently, after fixing the natural inner product on this space, the normalized \(G_2\)-invariant Hermitian operators form a circle \(S^1\).
\end{proposition}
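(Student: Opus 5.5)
The plan is to identify $\mathcal M$ with the tensor space $\O\otimes V\otimes V^*$, count $G_2$-invariants in that space by Schur's lemma, and then check that every invariant is self-adjoint. Concretely, I take $\mathcal M$ to be the image of the map
\[
\Theta:\O\otimes V\otimes V^*\to \End_\R(W),\qquad A=A^i{}_j\otimes e_i\otimes\varepsilon^j\ \longmapsto\ \bigl(\what^j\otimes e_j\mapsto (A^i{}_j\what^j)\otimes e_i\bigr),
\]
exactly as $h$ was built from the tensor \eqref{eq:h-tensor-definition}.

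\textbf{Step 1: injectivity and equivariance of $\Theta$.} Applying $\Theta(A)$ to $1\otimes e_j$ returns $A^i{}_j\otimes e_i$, so $\Theta(A)$ recovers $A$ and $\Theta$ is injective. Hence $\mathcal M\cong \O\otimes V\otimes V^*$, a space of real dimension $392$. The computation in the proof of Proposition~\ref{prop:h-selfadjoint-equivariant} uses only that $G_2$ acts by algebra automorphisms of $\O$. The same computation therefore shows that $\Theta$ intertwines the natural $G_2$-action on tensors with conjugation on $\End_\R(W)$. Consequently
\[
\mathcal M^{G_2}\cong(\O\otimes V\otimes V^*)^{G_2}.
\]

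\textbf{Step 2: counting invariants.} Using $\O\cong\mathbf 1\oplus\mathbf 7$, the identification $V^*\cong V$ via $g$, and the decomposition $V\otimes V\cong\mathbf 1\oplus\mathbf 7\oplus\mathbf{14}\oplus\mathbf{27}$ from Section~\ref{sec:background}, Schur's lemma gives
\[
\dim(\O\otimes V\otimes V)^{G_2}=\dim\mathrm{Hom}_{G_2}(\mathbf 1\oplus\mathbf 7,\ \mathbf 1\oplus\mathbf 7\oplus\mathbf{14}\oplus\mathbf{27})=1+1=2.
\]
Each of these irreducibles is of real type, so every endomorphism ring is $\R$. The two invariants can be written explicitly:
\[
I:=\delta^i{}_j\,1\otimes e_i\otimes\varepsilon^j,\qquad P:=\varphi^i{}_j{}^k\,\ehat{k}\otimes e_i\otimes\varepsilon^j.
\]
Their invariance follows from the $G_2$-invariance of $g$ and $\varphi$ together with the equivariance of $V\cong\Vhat$. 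They are linearly independent because they take values in $\R$ and $\ImO$ respectively. Hence $\mathcal M^{G_2}=\Span\{\Theta(I),\Theta(P)\}$, with $\Theta(I)=\id$ and $h=-\Theta(I)+\Theta(P)$.

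\textbf{Step 3: self-adjointness.} I repeat the adjoint computation from Proposition~\ref{prop:h-selfadjoint-equivariant} for a general tensor, using $\ip{ax}{y}_{\O}=\ip{x}{\bar a y}_{\O}$. This gives
\[
\Theta(A)^*=\Theta(A^\dagger),\qquad (A^\dagger)^i{}_j=\overline{A^j{}_i}.
\]
For $I$, the components $\delta$ are real and symmetric, so $I^\dagger=I$. For $P$, the components are imaginary, so conjugation gives a factor $-1$. They are also antisymmetric in $(i,j)$, because $\varphi$ is a $3$-form, which gives a second factor $-1$. Hence $P^\dagger=P$. It follows that $\mathcal M^{G_2}_{\mathrm{sa}}=\mathcal M^{G_2}$, which is two-dimensional.

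\textbf{Step 4: the circle.} I put the inner product $\ip{S}{T}=\tr(S^*T)$ on this plane. Since $\id$ and $\Theta(P)$ are orthogonal ($\tr\Theta(P)=0$) and both nonzero, the normalized operators are $\cos\theta\,\id/|\id|+\sin\theta\,\Theta(P)/|\Theta(P)|$ for $\theta\in S^1$. The only step needing care is Step 1, namely pinning down what ``left octonionic multiplication type'' means and checking that $\Theta$ is injective. Once $\mathcal M$ is identified with $\O\otimes V\otimes V^*$, the rest is a Schur count plus the sign check in Step 3.
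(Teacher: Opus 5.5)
Your proposal is correct and follows the same route as the paper. Like the paper, you identify \(\mathcal M\) with \(\O\otimes\End(V)\), count invariants using \(\O\cong\mathbf 1\oplus\mathbf 7\) and \(\End(V)\cong\mathbf 1\oplus\mathbf 7\oplus\mathbf{14}\oplus\mathbf{27}\), and check that the metric and \(\varphi\) invariants are self-adjoint. Your injectivity check for \(\Theta\), the adjoint formula \(\Theta(A)^*=\Theta(A^\dagger)\), and the trace-orthogonality of \(\id\) and \(\Theta(P)\) supply details the paper only asserts.
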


\begin{proof}
By definition, \(\mathcal M\) is the image of
\[
\Phi:\O\tensor \End(V)\to \End_{\R}(W),
\qquad
\Phi(a\tensor B)=L_a\tensor B.
\]
Hence the \(G_2\)-invariant part identifies with the invariant tensors in
\[
\O\tensor \End(V).
\]
As a \(G_2\)-module one has
\[
\O\cong \mathbf 1\oplus \mathbf 7,
\qquad
\End(V)\cong V\tensor V^*
\cong \mathbf 1\oplus \mathbf 7\oplus \mathbf{14}\oplus \mathbf{27}.
\]
Therefore
\[
(\O\tensor \End(V))^{G_2}
\cong
(\mathbf 1\tensor \mathbf 1)^{G_2}
\oplus
(\mathbf 7\tensor \mathbf 7)^{G_2},
\]
which is two-dimensional, since both \(\mathbf 1\tensor \mathbf 1\) and \(\mathbf 7\tensor \mathbf 7\) contain a unique invariant line.

Concretely, these two invariant directions correspond to the scalar and imaginary parts of octonion multiplication on \(\Vhat=\Im\O\), namely the metric term and the cross-product term. Both induce self-adjoint operators on \(W\), so the Hermitian condition does not reduce the invariant space further. Thus \(\mathcal M^{G_2}_{\mathrm{sa}}\) is a real \(2\)-plane, and its unit sphere is \(S^1\).
\end{proof}

\begin{remark}
The $G_2$-invariant two-plane of Hermitian tensors of left octonionic multiplication type is spanned by the octonion product and the opposite product,
\[
(\hat u,\hat v)\mapsto \hat u\hat v,
\qquad
(\hat u,\hat v)\mapsto \hat v\hat u.
\]
Equivalently, it is spanned by the real and imaginary parts of octonion multiplication:
\[
-g_{ij}\,1
\qquad\text{and}\qquad
\varphi_{ij}{}^{k}\hat e_k.
\]
Thus the general element of this plane may be written as
\begin{equation}\label{eq:invariant-two-plane-hab}
h^{(a,b)}_{ij}
=
-a\,g_{ij}\,1+b\,\varphi_{ij}{}^{k}\hat e_k
=
\frac{a+b}{2}\,\hat e_i\hat e_j+\frac{a-b}{2}\,\hat e_j\hat e_i.
\end{equation}
In this paper, we make the choice $a=1, b=1$. Other choices lead to related pencils, but the corresponding spectral loci may degenerate or change form; we do not analyze that family here.
\end{remark}

\subsection{Analogy with complex structures} \label{sec:cx-analogy}

Although \(h\) is not a complex structure on \(W\), it is useful to keep the complex-geometric analogy in mind. Carrying over the cross product and $\varphi$ to $V$,  for a unit vector \(u\in V \), the map
\[
J_u(v)=u\times v=\varphi_{ij}{}^k u^iv^je_k
\]
defines a complex structure on \(u^\perp\subset V\), and the stabilizer of \(u\) in \(G_2\) is the corresponding \(\mathrm{SU}(3)\)-subgroup. In this sense, the octonion algebra carries a family of slice-wise complex structures parametrized by unit imaginary octonions. 

The operator \(h\) may be viewed as a global \(G_2\)-equivariant package of this family. Indeed, if \(v=v^j e_j\in V\), viewed in \(W\) as \(1\otimes v\), then
\begin{align}
h(1\otimes v)
&=
\bigl(-\delta^i{}_j\,1+\varphi^i{}_j{}^k\ehat{k}\bigr)v^j\otimes e_i \notag \\
&=
-\,1\otimes v+\sum_{i,j,k}\varphi^i{}_j{}^k v^j\,\ehat{k}\otimes e_i \notag \\
&=
-\,1\otimes v-\sum_{k=1}^7 \ehat{k}\otimes J_{e_k}(v), \label{eq:hcxstruct}
\end{align}
where the last equality uses
\[
J_{e_k}(v)^i=\varphi^i{}_{k j}v^j
\]
and the antisymmetry of \(\varphi\) in its lower indices.

 This construction may be compared with the standard complexification of a real vector space equipped with a complex structure. Indeed, suppose $U$ is a $2n$-dimensional real vector space equipped with a complex structure $J$, and suppose $\C \tens U$ is its complexification.   Then the second term in \eqref{eq:hcxstruct} may be compared to taking $i \tens J(x)$ for $x\in U $.   In the present setting, the single complex structure \(J\) is replaced by the \(G_2\)-family \(\{J_{e_k}\}_{k=1}^7\), and \(h\) packages these slice-wise complex structures together with coefficients in the octonion basis \(\{\ehat{k}\}\).

\section{Real spectral theory of $h$}\label{sec:real-spectrum}

We now determine the ordinary real spectrum of the operator
\[
h:W\to W,
\qquad
W=\mathbb O\otimes_{\mathbb R}V.
\]
The basic mechanism is representation-theoretic: since \(h\) is \(G_2\)-equivariant, its action is constrained by the decomposition
\[
W\cong \mathbf 1\oplus \mathbf 7\oplus \mathbf 7\oplus \mathbf{14}\oplus \mathbf{27}.
\]
Thus the computation reduces to scalar actions on the multiplicity-one summands and a single \(2\times2\) matrix on the \(\mathbf 7\oplus \mathbf 7\) isotypic component.

\subsection{Schur-theoretic reduction}

Recall from Section~\ref{sec:background} that
\[
W\cong \mathbf 1\oplus \mathbf 7\oplus \mathbf 7\oplus \mathbf{14}\oplus \mathbf{27}
\]
as a real \(G_2\)-module. Since \(h\) is \(G_2\)-equivariant, by Schur's Lemma, it lies in the $G_2$-action commutant
\[
\End_{G_2}(W)\cong \mathbb R\oplus M_2(\mathbb R)\oplus \mathbb R\oplus \mathbb R.
\]
Accordingly,
\begin{itemize}
    \item \(h\) acts by scalars on \(\mathbf 1\), \(\mathbf{14}\), and \(\mathbf{27}\);
    \item \(h\) acts by a \(2\times2\) real matrix on the two copies of \(\mathbf 7\).
\end{itemize}
Thus the real spectrum is determined by these four blocks.

\subsection{Action on the irreducible components}

Fix an orthonormal basis \(\{e_1,\dots,e_7\}\) of \(V\), and let
\[
\{\ehat{1},\dots,\ehat{7}\}
\]
denote the corresponding orthonormal basis of \(\Vhat=\Im\O\subset \O\). Thus, as before, unhatted basis vectors belong to the abstract \(G_2\)-module \(V\), whereas hatted basis vectors belong to \(\Vhat\) and may be multiplied using the octonion product.

With respect to the chosen basis of \(\Vhat\), octonion multiplication takes the form
\[
\ehat{i}\ehat{j}=-g_{ij}\,1+\varphi_{ijk}\ehat{k}.
\]
Since the basis is orthonormal, we may identify \(g_{ij}=\delta_{ij}\) and raise/lower indices freely. In this subsection, however, we keep all indices downstairs.

Accordingly, the coefficients of \(h\) are
\[
h_{ij}=\ehat{i}\ehat{j}=-g_{ij}\,1+\varphi_{ijk}\ehat{k}.
\]

Now write \(\what\in W=\O\tensor V\) as
\[
\what=\what_j\tensor e_j,
\qquad
\what_j=v_j\,1+w_{ij}\ehat{i},
\]
where \(v_j,w_{ij}\in \R\). Under the identification
\[
W\cong V\oplus (\Vhat\tensor V),
\]
the real and imaginary parts of \(\what\) are thus \((v_j,w_{ij})\).

To explicitly work out the action of $h$ on $\what$, we will use standard contraction identities for the \(G_2\)-forms \(\varphi\) and \(\psi\); see, for example, \cite[Section 2]{Karigiannis2019IntroductionG2} and \cite[Section 2]{SalamonWalpuski2017NotesOctonions}. The following component formula will be used repeatedly.
\begin{lemma}\label{lem:h-master}
With notation as above,
\begin{align}\label{eq:h-master}
h(\what)
&=
\bigl(-v_p+\varphi_{ijp}w_{ij}\bigr)1\tensor e_p \notag \\
& +
\bigl(
\varphi_{iqj}v_j
-w_{iq}
+w_{qi}
-g_{iq}\tr(w)
+\psi_{iq\ell j}w_{\ell j}
\bigr)\ehat{i}\tensor e_q,
\end{align}
where
\[
\tr(w)=w_{pp}.
\]
Equivalently, in components,
\begin{align}
(h(\what))^{\mathrm{re}}_{p}
&=
-v_p+\varphi_{ijp}w_{ij}, \label{eq:h-master-real}\\
(h(\what))^{\mathrm{im}}_{iq}
&=
\varphi_{iqj}v_j
-w_{iq}
+w_{qi}
-g_{iq}\tr(w)
+\psi_{iq\ell j}w_{\ell j}. \label{eq:h-master-imag}
\end{align}
\end{lemma}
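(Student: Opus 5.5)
The proof is a direct computation in an orthonormal basis. I would expand each octonionic component of \(h(\what)\) using the multiplication rule \eqref{eq:octonion-multiplication-components}. I would then reduce the one quadratic \(\varphi\varphi\) term with the standard contraction identity relating \(\varphi\) and \(\psi\).

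\textbf{Step 1: expand the product.} By \eqref{eq:hwhat-ortho}, the coefficient of \(e_p\) in \(h(\what)\) is \((\ehat{p}\ehat{q})\what_q\), with \(\ehat{p}\ehat{q}=-\delta_{pq}1+\varphi_{pqk}\ehat{k}\) and \(\what_q=v_q1+w_{iq}\ehat{i}\). Multiplying out gives four terms:
\[
-v_p1-w_{ip}\ehat{i}+\varphi_{pqk}v_q\ehat{k}+\varphi_{pqk}w_{iq}\,\ehat{k}\ehat{i}.
\]
Only the last term involves a second octonion product. There is no parenthesization ambiguity here, because each summand is a product of just two octonions once \(\ehat{p}\ehat{q}\) has been expanded.

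\textbf{Step 2: expand the last term.} Applying the multiplication rule again,
\[
\varphi_{pqk}w_{iq}\,\ehat{k}\ehat{i}=-\varphi_{pqk}w_{kq}1+\varphi_{pqk}\varphi_{kil}w_{iq}\ehat{l}.
\]
For the real part, total antisymmetry of \(\varphi\) gives \(-\varphi_{pqk}w_{kq}=\varphi_{kqp}w_{kq}\). Together with \(-v_p\), and after renaming dummy indices, this is \eqref{eq:h-master-real}. The term \(\varphi_{pqk}v_q\ehat{k}\) becomes \(\varphi_{iqj}v_j\) after relabelling (coefficient of \(\ehat{i}\otimes e_q\)) and one use of antisymmetry. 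The term \(-w_{ip}\ehat{i}\) gives the \(-w_{iq}\) contribution.

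\textbf{Step 3: the quadratic term.} For \(\varphi_{pqk}\varphi_{kil}w_{iq}\) I would use the standard identity
\[
\varphi_{abk}\varphi_{cdk}=g_{ac}g_{bd}-g_{ad}g_{bc}\pm\psi_{abcd},
\]
with the sign fixed by the convention of \cite{Karigiannis2019IntroductionG2}. After rotating indices so that \(k\) sits in the same slot in both factors, contracting with \(w_{iq}\) yields three pieces: a trace term \(\propto g\,\tr(w)\), a transpose term \(\propto w\) with swapped indices, and a \(\psi w\) term. Relabelling the free indices to the pair \((i,q)\) used in \eqref{eq:h-master-imag} should produce \(w_{qi}-g_{iq}\tr(w)+\psi_{iq\ell j}w_{\ell j}\).

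\textbf{Main obstacle.} The substantive difficulty is bookkeeping of signs and index order: the cyclic rearrangements of \(\varphi\), the sign convention in the \(\varphi\varphi\)–\(\psi\) identity, and the final relabelling, including the orientation of \(\ehat{i}\otimes e_q\) versus the output index \(p\). I would fix the identity's sign by testing it on an explicit associative triple, for instance \(\ehat{1}\ehat{2}=\ehat{3}\) in a standard basis. As a cross-check of the final formula I would then evaluate it on \(\what=\ehat{1}\otimes e_1\) directly.
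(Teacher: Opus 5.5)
Your proposal is correct and is essentially the paper's own proof: expand $(\ehat{p}\ehat{q})\what_q$, apply the multiplication rule again to $\ehat{k}\ehat{i}$, and reduce the resulting $\varphi\varphi$ term with the contraction identity, after which relabelling (using $\psi_{qj\ell i}=\psi_{iq\ell j}$) gives exactly \eqref{eq:h-master-imag}. The one thing to pin down is that the sign of the $\psi$-term comes from a convention for $\psi$, not from the multiplication table, so your explicit-triple test only fixes it once $\psi$ itself is defined. The lemma holds with $\varphi_{abk}\varphi_{kcd}=g_{ac}g_{bd}-g_{ad}g_{bc}+\psi_{abcd}$, which is the form the paper uses and which matches its later use of $\psi$ acting by $4$ on $\Lambda^2_7V$ and by $-2$ on $\Lambda^2_{14}V$; the opposite orientation convention would flip the sign of the last term.
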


\begin{proof}
By definition,
\[
h(\what)=h_{pq}\what_q\tensor e_p
=
(\ehat{p}\ehat{q})(v_q\,1+w_{iq}\ehat{i})\tensor e_p.
\]
Using
\[
\ehat{p}\ehat{q}=-g_{pq}\,1+\varphi_{pqk}\ehat{k},
\]
we expand:
\begin{align*}
h(\what)
&=
\bigl(-g_{pq}\,1+\varphi_{pqk}\ehat{k}\bigr)
\bigl(v_q\,1+w_{iq}\ehat{i}\bigr)\tensor e_p \\
&=
\Bigl(
-g_{pq}v_q\,1
-g_{pq}w_{iq}\ehat{i}
+\varphi_{pqk}v_q\ehat{k}
+\varphi_{pqk}w_{iq}\ehat{k}\ehat{i}
\Bigr)\tensor e_p.
\end{align*}
Now use octonion multiplication again:
\[
\ehat{k}\ehat{i}=-g_{ki}\,1+\varphi_{kim}\ehat{m}.
\]
Substituting this gives
\begin{align*}
h(\what)
&=
\Bigl(
-g_{pq}v_q
-\varphi_{pqk}g_{ki}w_{iq}
\Bigr)1\tensor e_p \\
&\qquad
+
\Bigl(
-g_{pq}w_{iq}\ehat{i}
+\varphi_{pqk}v_q\ehat{k}
+\varphi_{pqk}\varphi_{kim}w_{iq}\ehat{m}
\Bigr)\tensor e_p.
\end{align*}
Since the basis is orthonormal, \(g_{pq}=\delta_{pq}\), so the real part becomes
\[
\bigl(-v_p+\varphi_{ijp}w_{ij}\bigr)1\tensor e_p.
\]
This is \eqref{eq:h-master-real}.

For the imaginary part, relabel dummy indices:
\[
-g_{pq}w_{iq}\ehat{i}\tensor e_p
=
-w_{ip}\ehat{i}\tensor e_p
=
-w_{iq}\ehat{i}\tensor e_q,
\]
and
\[
\varphi_{pqk}v_q\ehat{k}\tensor e_p
=
\varphi_{ijq}v_j\ehat{i}\tensor e_q.
\]
Thus the imaginary part is
\[
\bigl(
\varphi_{jiq}v_j
-w_{iq}
+\varphi_{jqk}\varphi_{k\ell i}w_{\ell j}
\bigr)\ehat{i}\tensor e_q.
\]

Now apply the standard \(G_2\)-contraction identity
\[
\varphi_{abk}\varphi_{kcd}
=
g_{ac}g_{bd}-g_{ad}g_{bc}+\psi_{abcd}.
\]
In the present indices, this gives
\[
\varphi_{qjk}\varphi_{k\ell i}
=
\delta_{q\ell}\delta_{ji}
-\delta_{qi}\delta_{j\ell}
+\psi_{qj\ell i}.
\]
Hence
\begin{align*}
\varphi_{qjk}\varphi_{k\ell i}w_{\ell j}
&=
(\delta_{q\ell}\delta_{ji}
-\delta_{qi}\delta_{j\ell}
+\psi_{qj\ell i})w_{\ell j} \\
&=
w_{qi}
-\delta_{qi}w_{jj}
+\psi_{qj\ell i}w_{\ell j}.
\end{align*}
Using the antisymmetry of \(\psi\) to rewrite the last term, this becomes
\[
w_{qi}
-g_{iq}\tr(w)
+\psi_{iq\ell j}w_{\ell j}.
\]
Substituting this into the imaginary part and rearranging indices, yields
\[
\bigl(
+\varphi_{iqj}v_j
-w_{iq}
+w_{qi}
-g_{iq}\tr(w)
+\psi_{iq\ell j}w_{\ell j}
\bigr)\ehat{i}\tensor e_q,
\]
which is \eqref{eq:h-master-imag}. Combining the real and imaginary parts proves \eqref{eq:h-master}.
\end{proof}

We now specialize \eqref{eq:h-master} to the irreducible \(G_2\)-summands. 

\begin{lemma}\label{lem:h-block-computation}
With respect to the decomposition
\[
W\cong \mathbf 1\oplus \mathbf 7\oplus \mathbf 7\oplus \mathbf{14}\oplus \mathbf{27},
\]
the operator \(h\) acts as follows:
\begin{enumerate}
    \item on \(\mathbf 1\), by the scalar \(-7\);
    \item on \(\mathbf{14}\), by the scalar \(-4\);
    \item on \(\mathbf{27}\), by the scalar \(0\);
    \item on the \(\mathbf 7\oplus \mathbf 7\) isotypic component, by the matrix
\begin{equation}\label{eq:h-seven-block-matrix}
 \begin{pmatrix}
-1 & 6\\
1 & 2
\end{pmatrix}.
\end{equation}
   For the \(\mathbf 7\oplus \mathbf 7\) component, we use the ordered parametrization
\[
(v,u)\in V\oplus V
\longmapsto
v_j\,1\otimes e_j
+
\varphi_{kij}u_k\,\ehat{i}\otimes e_j,
\]
so that the first copy of \(\mathbf 7\) is the real \(V\)-summand and the second copy is the \(\Lambda^2_7V\)-summand with components \(w_{ij}=\varphi_{kij}u_k\).
\end{enumerate}
\end{lemma}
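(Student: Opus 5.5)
The plan is to substitute a representative of each summand into the master formula of Lemma~\ref{lem:h-master} and read off the action. By the Schur-theoretic reduction, $h$ acts by a scalar on $\mathbf 1$, $\mathbf{14}$ and $\mathbf{27}$, and by a $2\times 2$ matrix on the $\mathbf 7\oplus\mathbf 7$ component. So it is enough to evaluate \eqref{eq:h-master-real}--\eqref{eq:h-master-imag} on one typical element of each summand and check that the output has the same form.

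The only input beyond Lemma~\ref{lem:h-master} is the action of the contraction $w\mapsto \psi_{iq\ell j}w_{\ell j}$ on $\Lambda^2 V$. This map is $G_2$-equivariant. On symmetric $w$ it vanishes, because $\psi$ is antisymmetric. On $\Lambda^2V=\Lambda^2_7\oplus\Lambda^2_{14}$ it is therefore a scalar $\lambda_7$ on $\Lambda^2_7$ and a scalar $\lambda_{14}$ on $\Lambda^2_{14}$. I would fix these two scalars in the paper's own sign convention, i.e.\ from the identity $\varphi_{abk}\varphi_{kcd}=g_{ac}g_{bd}-g_{ad}g_{bc}+\psi_{abcd}$ used in the proof of Lemma~\ref{lem:h-master}, since this is where a sign error is most likely.
\begin{itemize}
\item First I compute $\lambda_7$. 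Contracting that identity with $\varphi_{ecd}$ and using $\varphi_{kcd}\varphi_{ecd}=6\delta_{ke}$ gives $6\varphi_{abe}=2\varphi_{abe}+\psi_{abcd}\varphi_{ecd}$. Hence $\psi_{abcd}\varphi_{ecd}=4\varphi_{abe}$, so $\lambda_7=4$.
\item Then I get $\lambda_{14}$ from tracelessness. The trace of $\psi$ acting on $\Lambda^2V$ vanishes, since $\psi_{abab}=0$. This gives $7\lambda_7+14\lambda_{14}=0$, so $\lambda_{14}=-2$.
\end{itemize}

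With these scalars, the scalar blocks are quick checks.
\begin{itemize}
\item For $\mathbf 1$, take $v=0$ and $w_{ij}=g_{ij}$. The real part is $\varphi_{ijp}g_{ij}=0$. The imaginary part is $-g_{iq}+g_{qi}-7g_{iq}+0=-7g_{iq}$, so $h$ acts by $-7$.
\item For $\mathbf{14}$, take $v=0$ and $w$ antisymmetric with $\varphi_{ijp}w_{ij}=0$. The real part vanishes. The imaginary part is $-w_{iq}-w_{iq}-0+\lambda_{14}w_{iq}=-4w_{iq}$, so $h$ acts by $-4$.
\item For $\mathbf{27}$, take $v=0$ and $w$ symmetric and traceless. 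The real part vanishes, and so does every term of the imaginary part, so $h$ acts by $0$.
\end{itemize}

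For the $\mathbf 7\oplus\mathbf 7$ block I would compute the images of the two generators in the stated parametrization $(v,u)\mapsto v_j 1\otimes e_j+\varphi_{kij}u_k\ehat{i}\otimes e_j$.
\begin{itemize}
\item Input $(v,0)$. The real part is $-v_p$. The imaginary part is $\varphi_{iqj}v_j=\varphi_{jiq}v_j$, which is the $\Lambda^2_7$ element with $u=v$. So the first column of the matrix is $(-1,1)^T$.
\item Input $(0,u)$, i.e.\ $w_{ij}=\varphi_{kij}u_k$. The real part is $\varphi_{ijp}\varphi_{kij}u_k=6u_p$. For the imaginary part, $w$ is antisymmetric and traceless, so it equals $-2w_{iq}+\lambda_7 w_{iq}=2w_{iq}$. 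So the second column is $(6,2)^T$.
\end{itemize}
Together these give the matrix \eqref{eq:h-seven-block-matrix}.

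The main obstacle is keeping the index orderings and signs of $\varphi$ and $\psi$ consistent throughout. The value $\lambda_{14}=-2$, rather than another sign, decides whether the $\mathbf{14}$ eigenvalue comes out as $-4$. I would therefore double-check the identification $\varphi_{iqj}v_j\leftrightarrow u=v$ and the relabeling $\psi_{qj\ell i}\to\psi_{iq\ell j}$ from the proof of Lemma~\ref{lem:h-master}.
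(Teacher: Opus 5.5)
Your proposal is correct and follows the paper's own proof: you substitute a general element of each $G_2$-summand into Lemma~\ref{lem:h-master}, use $\varphi_{ijp}\varphi_{kij}=6g_{pk}$ for the upper-right entry, and read off the three scalars and the $2\times 2$ block. The only difference is in the $\psi$-contraction: the paper quotes the eigenvalues $4$ and $-2$ of $w\mapsto\psi_{iq\ell j}w_{\ell j}$ on $\Lambda^2_7V$ and $\Lambda^2_{14}V$ as standard projector identities, whereas you derive them from the paper's own identity $\varphi_{abk}\varphi_{kcd}=g_{ac}g_{bd}-g_{ad}g_{bc}+\psi_{abcd}$ together with a trace argument, which has the merit of tying the signs to the $+\psi$ convention actually used in Lemma~\ref{lem:h-master}.
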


\begin{proof}
We apply Lemma~\ref{lem:h-master} to each irreducible summand in turn.

\medskip
\noindent\textbf{The \(\mathbf 1\)-summand.}
This is generated by the metric tensor. Thus
\[
v_p=0,
\qquad
w_{ij}=g_{ij}=\delta_{ij}.
\]
Substituting into \eqref{eq:h-master-real}, we obtain
\[
(h(\what))^{\mathrm{re}}_{p}
=
-\varphi_{ijp}g_{ij}=0,
\]
since \(\varphi_{ijp}\) is skew in \(i,j\), whereas \(g_{ij}\) is symmetric.

For the imaginary part, \eqref{eq:h-master-imag} gives
\[
(h(\what))^{\mathrm{im}}_{iq}
=
-w_{iq}+w_{qi}-g_{iq}\tr(w)+\psi_{iq\ell j}w_{\ell j}.
\]
Here \(w_{iq}=w_{qi}=g_{iq}\), \(\tr(w)=g_{pp}=7\), and
\[
\psi_{ijq\ell}g_{\ell j}=0
\]
by skew-symmetry of \(\psi\) in \(j,\ell\). Hence
\[
(h(\what))^{\mathrm{im}}_{iq}
=
-g_{iq}+g_{qi}-7\delta_{iq}
=
-7g_{iq}.
\]
Therefore
\[
h(g)=-7g,
\]
so \(h\) acts on \(\mathbf 1\) by the scalar \(-7\).

\medskip 
\noindent\textbf{The \(\mathbf 7\oplus \mathbf 7\) isotypic component.}
There are two natural copies of \(\mathbf 7\).

The first is the \(V\)-summand, corresponding to
\[
w_{ij}=0,
\qquad
v_j \text{ arbitrary.}
\]
Substituting into Lemma~\ref{lem:h-master}, we get
\[
(h(\what))^{\mathrm{re}}_{p}=-v_p,
\qquad
(h(\what))^{\mathrm{im}}_{iq}=\varphi_{iqj}v_j.
\]
Thus
\[
h(v_j\,1\tensor e_j)
=
-v_p\,1\tensor e_p
+
(\varphi_{iqj}v_j)\,\ehat{i}\tensor e_q.
\]
The first term is the negative of the original \(\mathbf 7\)-summand, and the second is equal to the \(\Lambda^2_7\)-copy of \(\mathbf 7\). Hence the first column of the \(2\times2\) block is
\[
\begin{pmatrix}
-1\\
1
\end{pmatrix}.
\]

The second copy of \(\mathbf 7\) is \(\Lambda^2_7V\subset \Lambda^2V\), given by
\[
v_p=0,
\qquad
w_{ij}=\varphi_{kij}u_k
\]
for some \(u_k\in\R\). Since \(w_{ij}\) is skew, \(\tr(w)=0\). Then
\[
(h(\what))^{\mathrm{re}}_{p}
=
\varphi_{ijp}\varphi_{kij}u_k.
\]
Using the standard contraction identity
\[
\varphi_{ijp}\varphi_{kij}=6g_{pk},
\]
we obtain
\[
(h(\what))^{\mathrm{re}}_{p}=6u_p.
\]
Thus the upper-right entry of the matrix is \(6\).

For the imaginary part,
\[
(h(\what))^{\mathrm{im}}_{iq}
=
-w_{iq}+w_{qi}+\psi_{iq\ell j}w_{\ell j},
\]
since \(v_j=0\) and \(\tr(w)=0\). Because \(w_{ij}\) is skew,
\[
-w_{iq}+w_{qi}=-2w_{iq}.
\]
Now \(w_{ij}\in \Lambda^2_7V\), so the standard projector identity gives
\[
\psi_{iq\ell j}w_{\ell j}=4w_{iq}.
\]
Therefore
\[
(h(\what))^{\mathrm{im}}_{iq}=-2w_{iq}+4w_{iq}=2w_{iq}.
\]
Thus the lower-right entry is \(2\), and the full matrix on the \(\mathbf 7\oplus \mathbf 7\) block is
\[
\begin{pmatrix}
-1 & 6\\
1 & 2
\end{pmatrix}.
\]

\medskip
\noindent\textbf{The \(\mathbf{14}\)-summand.}
Here
\[
v_p=0,
\qquad
w_{ij}\in \Lambda^2_{14}V.
\]
Thus \(w_{ij}=-w_{ji}\), \(\tr(w)=0\), and
\[
\varphi_{ijp}w_{ij}=0
\]
by definition of \(\Lambda^2_{14}V\). Hence the real part vanishes:
\[
(h(\what))^{\mathrm{re}}_{p}=0.
\]

For the imaginary part,
\[
(h(\what))^{\mathrm{im}}_{iq}
=
-w_{iq}+w_{qi}+\psi_{iq\ell j}w_{\ell j}.
\]
Since \(w\) is skew,
\[
-w_{iq}+w_{qi}=-2w_{iq}.
\]
Now \(w\in \Lambda^2_{14}V\), so the projection identity is
\[
\psi_{iq\ell j}w_{\ell j}=-2w_{iq}.
\]
Therefore
\[
(h(\what))^{\mathrm{im}}_{iq}=-2w_{iq}-2w_{iq}=-4w_{iq}.
\]
Hence
\[
h|_{\mathbf{14}}=-4\,\id.
\]

\medskip
\noindent\textbf{The \(\mathbf{27}\)-summand.}
Here
\[
v_p=0,
\qquad
w_{ij}=w_{ji},
\qquad
w_{pp}=0.
\]
Since \(w_{ij}\) is symmetric, the real part vanishes:
\[
(h(\what))^{\mathrm{re}}_{p}
=
-\varphi_{ijp}w_{ij}=0.
\]
Also,
\[
-w_{iq}+w_{qi}=0,
\qquad
\tr(w)=0.
\]
Thus the imaginary part reduces to
\[
(h(\what))^{\mathrm{im}}_{iq}
=
\psi_{iq\ell j}w_{\ell j}.
\]
But the contraction of the totally skew tensor \(\psi_{iq\ell j}\) against the symmetric tensor \(w_{\ell j}\) vanishes identically, so
\[
(h(\what))^{\mathrm{im}}_{iq}=0.
\]
Hence
\[
h|_{\mathbf{27}}=0.
\]
This completes the block computation.
\end{proof}

\subsection{The real spectrum}

We now assemble the block calculations into the main result of this section.

\begin{theorem}\label{thm:real-spectrum}
The real spectrum of \(h\) is
\begin{equation}\label{eq:real-spectrum-set}
\Spec_{\R}(h)
=
\left\{
-7,\,-4,\,0,\,
\frac{1-\sqrt{33}}{2},\,
\frac{1+\sqrt{33}}{2}
\right\}.
\end{equation}
More precisely:
\begin{enumerate}
    \item \(-7\) occurs on \(\mathbf 1\), with multiplicity \(1\);
    \item \(-4\) occurs on \(\mathbf{14}\), with multiplicity \(14\);
    \item \(0\) occurs on \(\mathbf{27}\), with multiplicity \(27\);
    \item the remaining two eigenvalues are the eigenvalues of
    \[
\begin{pmatrix}
-1 & 6\\
1 & 2
\end{pmatrix}.
    \]
    on the \(\mathbf 7\oplus \mathbf 7\) isotypic component, each with multiplicity \(7\).
\end{enumerate}
\end{theorem}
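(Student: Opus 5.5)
The proof assembles Lemma~\ref{lem:h-block-computation} with the $G_2$-decomposition \eqref{eq:W-as-G2-summands}. By Proposition~\ref{prop:h-selfadjoint-equivariant}, $h$ is self-adjoint and $G_2$-equivariant. Hence $W$ splits orthogonally into the $h$-invariant subspaces $\mathbf 1$, $\mathbf{14}$, $\mathbf{27}$ and the isotypic component $\mathbf 7\oplus\mathbf 7$. The real spectrum of $h$ is the union of the spectra of its restrictions to these pieces, and multiplicities add.

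First, Lemma~\ref{lem:h-block-computation} gives the scalars $-7$, $-4$, $0$ on $\mathbf 1$, $\mathbf{14}$, $\mathbf{27}$. Their multiplicities are the dimensions $1$, $14$, $27$.

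Next, on $\mathbf 7\oplus\mathbf 7$ we use the parametrization $(v,u)\mapsto v_j\,1\otimes e_j+\varphi_{kij}u_k\,\ehat{i}\otimes e_j$. This is a $G_2$-equivariant isomorphism $V\oplus V\cong\mathbf 7\oplus\mathbf 7$, injective because $u\mapsto\varphi_{kij}u_k$ is injective (contracting with $\varphi_{\ell ij}$ returns $6u_\ell$). In these coordinates $h$ acts as $A\otimes\id_V$, with $A$ the matrix in \eqref{eq:h-seven-block-matrix}. So the spectrum on this block is that of $A$, each eigenvalue with multiplicity $7$.

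The characteristic polynomial of $A$ is
\[
(\lambda+1)(\lambda-2)-6=\lambda^2-\lambda-8,
\]
with roots $\tfrac{1\pm\sqrt{33}}{2}$. These are real and distinct, so $A$ is diagonalizable over $\R$.

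One point needs care. $A$ is not symmetric, which seems at odds with $h$ being self-adjoint. The reason is that the two copies of $\mathbf 7$ are not parametrized isometrically. The norm of $1\otimes v$ is $|v|^2$, while the norm of the $\Lambda^2_7$ element is $\varphi_{kij}\varphi_{\ell ij}u_ku_\ell=6|u|^2$. With the Gram matrix $G=\mathrm{diag}(1,6)$, one checks $GA=\begin{pmatrix}-1&6\\6&12\end{pmatrix}$, which is symmetric. This confirms the block is self-adjoint and independently justifies real eigenvalues.

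Finally, the four pieces have total dimension $1+14+27+14=56=\dim W$, so there are no further eigenvalues. The resulting list, $-7,\,-4,\,0,\,\tfrac{1\pm\sqrt{33}}{2}$, is \eqref{eq:real-spectrum-set}, with the stated multiplicities.
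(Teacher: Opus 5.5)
Your proposal is correct and takes the same approach as the paper: both read off the scalars from Lemma~\ref{lem:h-block-computation} and then diagonalize the $2\times 2$ block on $\mathbf 7\oplus\mathbf 7$ via its characteristic polynomial $\lambda^2-\lambda-8$. Your additional checks are accurate and only make explicit what the paper leaves implicit. These are the injectivity of the $\Lambda^2_7$ parametrization, the Gram matrix $\mathrm{diag}(1,6)$ that reconciles the non-symmetric block with self-adjointness, the $A\otimes\mathrm{id}_V$ form giving multiplicity $7$, and the dimension count $1+14+27+14=56$.
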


\begin{proof}
By Lemma~\ref{lem:h-block-computation}, it remains only to diagonalize the \(2\times 2\) block
\[
\begin{pmatrix}
-1 & 6\\
1 & 2
\end{pmatrix}.
\]
Its characteristic polynomial is
\[
\det
\begin{pmatrix}
-1-\lambda & 6\\
1 & 2-\lambda
\end{pmatrix}
=
(-1-\lambda)(2-\lambda)-6
=
\lambda^2-\lambda-8.
\]
Hence the two eigenvalues are
\[
\lambda=\frac{1\pm\sqrt{33}}{2}.
\]
Together with the scalar eigenvalues on \(\mathbf 1\), \(\mathbf{14}\), and \(\mathbf{27}\), this gives the full real spectrum.
\end{proof}
\section{Octonionic right-eigenvalue problem}\label{sec:right-spectrum-reduction}

We now turn to the octonionic right-spectrum of \(h\). The key point of this section is that, after fixing a unit imaginary octonion, the right-eigenvalue equation can be rewritten as an ordinary real eigenvalue problem for a deformed operator. This reduction is the starting point for the later \(\SU(3)\)-equivariant block analysis.

\subsection{Right eigenvalues and the operator \(H_{u,Q}\)}

Recall from \eqref{eq:left-right-multiplication-W} that \(W=\O\tensor_{\R}V\) admits right octonion multiplication via right multiplication on the \(\O\)-factor:
\[
R_a(x\tensor v)=(x a)\tensor v,
\qquad
a,x\in \O,\ v\in V.
\]
Since octonion multiplication is noncommutative, and \(h\) is defined using left multiplication on the \(\O\)-factor, the right-eigenvalue equation
\[
h(\what)=\what \lambda,
\qquad
\what \in W,\ \lambda\in \O,
\]
is nontrivial.

\begin{remark}
There is a simple formal reason why Hermitian octonionic right-eigenvalue problems can differ from the associative case. Suppose, heuristically, that \(H=(h^i{}_j)\) is an octonionic Hermitian matrix and that
\[
h^i{}_j \uhat^j=\uhat^i\lambda
\]
for a nonzero \(\uhat\in W\). Contracting with \(\bar{\uhat}_i\) gives
\[
\bar{\uhat}_i(h^i{}_j \uhat^j)=|\uhat|^2\lambda.
\]
Applying conjugation and the Hermitian symmetry $\overline{h^i{}_j}=h_j{}^i$ allows one to obtain 
\[
|\uhat|^2(\lambda-\bar\lambda)
=
[\bar{\uhat}_i,h^i{}_j,\uhat^j],
\]
where $[a,b,c]=a(bc)-(ab)c$ is the associator. Since for octonions this does not necessarily vanish, we see that $\lambda$ does not need to be real. The same computation may be carried out in the associative case, but the associator vanishes of course, and we obtain that $\lambda$ is real. 
\end{remark}

We are interested in right eigenvalues lying in a fixed complex slice of \(\O\). Let \(u\in V\) be a unit vector, and let \(\uhat\in \Vhat=\Im\O\) be its image under the fixed \(G_2\)-equivariant identification \(V\cong \Vhat\). The corresponding complex slice is
\[
\C_{\uhat}:=\R\oplus \R \uhat \subset \O.
\]
Any \(\lambda\in \C_{\uhat}\) may be written uniquely as
\[
\lambda=\mu+Q\uhat,
\qquad
\mu,Q\in \R.
\]

Accordingly, we introduce the family of real-linear operators
\begin{equation}\label{eq:Hua-def}
H_{u,Q}:=h-Q\,R_{\uhat}\in \End_{\R}(W).
\end{equation}

The point is that the right-eigenvalue equation for \(\lambda=\mu+Q\uhat\) is equivalent to an ordinary real eigenvalue equation for \(H_{u,Q}\).

\begin{proposition}\label{prop:right-eigenvalue-reduction}
Let \(u\in V\) be a unit vector, let \(\uhat\in \Vhat\) denote the corresponding unit imaginary octonion, and let
\[
\lambda=\mu+Q\uhat\in \C_{\uhat},
\qquad
\mu,Q\in \R.
\]
Then, for \(\what\in W\), the right-eigenvalue equation
\begin{equation}\label{eq:right-eigenvalue}
h(\what)=\what\lambda
\end{equation}
is equivalent to
\begin{equation}\label{eq:Hua-eigenvalue}
H_{u,Q}(\what)=\mu \what.
\end{equation}
Equivalently,
\[
\ker(h-R_\lambda)\neq 0
\quad\Longleftrightarrow\quad
\ker(H_{u,Q}-\mu\,\id)\neq 0.
\]
\end{proposition}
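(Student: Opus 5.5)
The plan is to show that right multiplication by \(\lambda\) on \(W\) splits linearly as \(R_\lambda=\mu\,\id+Q\,R_{\uhat}\). The right-eigenvalue equation then becomes the stated real eigenvalue equation after one rearrangement. Nonassociativity plays no role, because only a single right multiplication by a fixed octonion is involved.

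The first step is to check that \(a\mapsto R_a\) is real-linear from \(\O\) into \(\End_{\R}(W)\). On decomposable tensors, \eqref{eq:left-right-multiplication-W} gives
\[
R_{a+b}(x\tensor v)=(x(a+b))\tensor v=(xa)\tensor v+(xb)\tensor v,
\qquad
R_{ta}(x\tensor v)=t(xa)\tensor v
\]
for \(t\in\R\). This holds because octonion multiplication is \(\R\)-bilinear and \(\R\) is central in \(\O\). Extending additively shows that \(R_{a+b}=R_a+R_b\) and \(R_{ta}=tR_a\) on all of \(W\). In particular \(R_1=\id\), since \(x\cdot 1=x\).

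The second step applies this to \(\lambda=\mu+Q\uhat\):
\[
\what\lambda=R_\lambda(\what)=\mu\what+Q\,R_{\uhat}(\what).
\]
Here we use the convention that \(\what\lambda\) means \(R_\lambda\what\), i.e.\ componentwise \(\what^j\lambda\tensor e_j\). Substituting into \eqref{eq:right-eigenvalue} gives \(h(\what)-Q R_{\uhat}(\what)=\mu\what\), which by \eqref{eq:Hua-def} is exactly \eqref{eq:Hua-eigenvalue}. Every step is reversible, so the two equations are equivalent.

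The kernel formulation follows from the operator identity
\[
h-R_\lambda=(h-QR_{\uhat})-\mu\,\id=H_{u,Q}-\mu\,\id,
\]
so the two kernels coincide as subspaces of \(W\). I expect no real obstacle here. The only point needing care is that no hidden associativity is used: the argument uses only bilinearity of the product and centrality of real scalars. That is where an associative-algebra intuition might otherwise be carried over incorrectly.
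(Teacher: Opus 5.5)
Your proposal is correct and follows the paper's proof: both expand \(\what\lambda=\mu\what+Q\,R_{\uhat}(\what)\) using real-bilinearity of the product and centrality of \(\R\), then rearrange to obtain \(H_{u,Q}(\what)=\mu\what\). Your version adds the explicit check that \(a\mapsto R_a\) is real-linear and states the operator identity \(h-R_\lambda=H_{u,Q}-\mu\,\id\), which makes the kernel equivalence immediate.
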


\begin{proof}
Write
\[
\lambda=\mu+Q\uhat.
\]
Then
\[
\what\lambda=\what(\mu+Q\uhat)=\mu \what+Q\,\what\uhat=\mu \what+Q\,R_{\uhat}(\what),
\]
since \(\mu\in \R\) is central and \(R_{\uhat}\) denotes right multiplication by \(\uhat\).

Thus \eqref{eq:right-eigenvalue} is equivalent to
\[
h(\what)=\mu \what+Q\,R_{\uhat}(\what),
\]
which in turn is equivalent to
\[
(h-Q\,R_{\uhat})(\what)=\mu \what.
\]
By definition of \(H_{u,Q}\), this is exactly \eqref{eq:Hua-eigenvalue}.
\end{proof}

\begin{remark}
The advantage of Proposition~\ref{prop:right-eigenvalue-reduction} is that it separates the genuinely octonionic part of the problem, encoded by the right multiplication operator \(R_{\uhat}\), from the real spectral parameter \(\mu\). Thus, after fixing the slice \(\C_{\uhat}\), the search for octonionic right eigenvalues reduces to the study of the real spectrum of the family \(H_{u,Q}\).
\end{remark}

\subsection{$\SU(3)$-equivariance}

We now explain why the operators \(H_{u,Q}\) inherit a residual symmetry after fixing a unit vector \(u\in V\). As in Section~\ref{sec:background}, recall that
\[
\SU(3)\subset G_2
\]
is the stabilizer of \(u\), equivalently of the corresponding unit imaginary octonion \(\uhat\in \Vhat=\Im\O\).

The key observation is that both ingredients in the definition
\[
H_{u,Q}=h-Q\,R_{\uhat}
\]
are \(\SU(3)\)-equivariant: the operator \(h\) is already \(G_2\)-equivariant by Proposition~\ref{prop:h-selfadjoint-equivariant}, while \(R_{\uhat}\) is \(\SU(3)\)-equivariant because elements of \(\SU(3)\) fix \(\uhat\).

\begin{proposition}\label{prop:Hua-su3-equivariant}
Let \(u\in V\) be a unit vector, let \(\uhat\in \Vhat\) denote the corresponding unit imaginary octonion, and let \(Q\in \R\). Then the operator
\[
H_{u,Q}=h-Q\,R_{\uhat}
\]
is \(\SU(3)\)-equivariant with respect to the diagonal action of the stabilizer \(\SU(3)\subset G_2\) of \(u\) on \(W=\O\tensor_{\R}V\).
\end{proposition}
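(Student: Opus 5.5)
The proof splits the commutation relation for \(H_{u,Q}\) into its two terms. Fix \(\gamma\in \SU(3)\), the stabilizer of \(u\) in \(G_2\) from Lemma~\ref{lem:G2-stabilizer-SU3}, and write \(\rho(\gamma)\) for the diagonal action \eqref{eq:diagonal-G2-action-W} on \(W\). Since \(H_{u,Q}=h-Q\,R_{\uhat}\) is a real linear combination and \(\rho(\gamma)\) is real-linear, it suffices to show separately that \(\rho(\gamma)\circ h=h\circ\rho(\gamma)\) and \(\rho(\gamma)\circ R_{\uhat}=R_{\uhat}\circ\rho(\gamma)\). The first identity is immediate from Proposition~\ref{prop:h-selfadjoint-equivariant}: \(h\) is \(G_2\)-equivariant, and \(\SU(3)\subset G_2\).

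For the second identity, I would check it on decomposable tensors \(x\tensor v\) with \(x\in\O\) and \(v\in V\), then extend by real-linearity. The key facts are that \(\gamma\) acts on \(\O\) by an algebra automorphism, and that the fixed \(G_2\)-equivariant identification \(V\cong\Vhat\) sends \(u\) to \(\uhat\). Because \(\gamma\) fixes \(u\), this equivariance gives \(\gamma(\uhat)=\uhat\). Then
\[
\rho(\gamma)R_{\uhat}(x\tensor v)=\gamma(x\uhat)\tensor\gamma(v)=\bigl(\gamma(x)\gamma(\uhat)\bigr)\tensor\gamma(v)=\bigl(\gamma(x)\uhat\bigr)\tensor\gamma(v)=R_{\uhat}\rho(\gamma)(x\tensor v).
\]
Combining the two commutation relations gives \(\rho(\gamma)H_{u,Q}=H_{u,Q}\rho(\gamma)\) for every \(Q\in\R\).

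The only step needing care is the claim \(\gamma(\uhat)=\uhat\). The stabilizer was defined relative to \(u\in V\), but \(R_{\uhat}\) uses \(\uhat\in\O\). These are reconciled by the \(G_2\)-equivariance of \(V\cong\Vhat\): it means \(\gamma\) commutes with the identification map, so \(\gamma\) fixes \(u\) if and only if it fixes \(\uhat\). A second point is that \(R_{\uhat}\) acts only on the \(\O\)-factor and the identity on \(V\) commutes with \(\gamma\). Once these are noted, no nonassociativity issue arises, since only the single product \(x\uhat\) appears and automorphisms preserve products.

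Finally, one could remark that \(R_{\uhat}\) is not \(G_2\)-equivariant: a general \(\gamma\in G_2\) moves \(\uhat\). The residual symmetry is therefore exactly the stabilizer, which motivates the \(\SU(3)\) block decomposition that follows.
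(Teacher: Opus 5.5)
Your proposal is correct and follows the paper's proof. The \(h\)-term inherits equivariance from \(G_2\), and \(R_{\uhat}\) commutes with the diagonal action because \(\gamma\) is an algebra automorphism fixing \(\uhat\); this is exactly the paper's computation \(\gamma(\what^q\uhat)\tensor\gamma(e_q)=(\gamma(\what^q)\uhat)\tensor\gamma(e_q)\), and checking on decomposable tensors rather than on \(\what^q\tensor e_q\) is only a cosmetic difference.
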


\begin{proof}
Let \(\gamma\in \SU(3)\subset G_2\). Since \(\SU(3)\) is the stabilizer of \(u\), and the identification \(V\cong \Vhat\) is \(G_2\)-equivariant, we have
\[
\gamma(\uhat)=\uhat.
\]
By Proposition~\ref{prop:h-selfadjoint-equivariant}, the operator \(h\) is \(G_2\)-equivariant, hence in particular
\[
h(\gamma\cdot \what)=\gamma\cdot h(\what)
\]
for all \(\what\in W\).

It therefore remains to check that \(R_{\uhat}\) is \(\SU(3)\)-equivariant. Let \(\what=\what^q\tensor e_q\in W\). Then
\[
R_{\uhat}(\what)=(\what^q\uhat)\tensor e_q.
\]
Applying \(\gamma\) gives
\[
\gamma\cdot R_{\uhat}(\what)
=
\gamma(\what^q\uhat)\tensor \gamma(e_q).
\]
Since \(\gamma\in G_2=\Aut(\O)\) is an octonion automorphism,
\[
\gamma(\what^q\uhat)=\gamma(\what^q)\gamma(\uhat)=\gamma(\what^q)\uhat,
\]
because \(\gamma(\uhat)=\uhat\). Hence
\[
\gamma\cdot R_{\uhat}(\what)
=
(\gamma(\what^q)\uhat)\tensor \gamma(e_q)
=
R_{\uhat}(\gamma\cdot \what).
\]
Thus \(R_{\uhat}\) commutes with the \(\SU(3)\)-action. Since both \(h\) and \(R_{\uhat}\) are \(\SU(3)\)-equivariant, so is
\[
H_{u,Q}=h-Q\,R_{\uhat}.
\]
\end{proof}

\begin{remark}
The symmetry group drops from \(G_2\) to \(\SU(3)\) precisely because the operator \(R_{\uhat}\) depends on the choice of the distinguished imaginary unit \(\uhat\). Thus the family \(H_{u,Q}\) is no longer \(G_2\)-equivariant in general, but retains the full symmetry of the stabilizer of the chosen complex slice
\[
\C_{\uhat}=\R\oplus \R\uhat.
\]
This is the representation-theoretic reason that the right-eigenvalue problem will be governed by the \(\SU(3)\)-decomposition of \(W\).
\end{remark}

\subsection{$\SU(3)$-decomposition of $W$}

\subsubsection{The basic $\SU(3)$-representations associated to $u$}

Fix a unit vector \(u\in V\), and let \(\uhat\in \Vhat=\Im\O\) denote the corresponding unit imaginary octonion. As in Section~\ref{sec:background}, the cross product with \(u\) defines an orthogonal complex structure
\begin{equation}\label{eq:slice-complex-structure}
J_u:u^\perp\to u^\perp,
\qquad
J_u(v)=u\times v.
\end{equation}
Indeed, if \(v\in u^\perp\), then
\[
J_u^2(v)=u\times(u\times v)=-v,
\]
so \(u^\perp\) becomes a complex \(3\)-dimensional Hermitian vector space. The stabilizer of \(u\) in \(G_2\) preserves \(g\), \(J_u\), and the induced complex volume form on \(u^\perp\), and is therefore isomorphic to \(\SU(3)\).

We now describe the real \(\SU(3)\)-representations that will appear in the restriction of the \(G_2\)-modules \(\seven\), \(\fourteen\), and \(\twentyseven\). The standard notation and basic structure of the \(\SU(3)\)-representations used below may be found, for example, in \cite[Chapter~14]{FultonHarris1991RepresentationTheory}; for the \(G_2\)-geometric realization via the complex structure \(J_u\), see also \cite[Section~2]{SalamonWalpuski2017NotesOctonions}.

\begin{definition}
With \(u\) fixed as above, let:
\begin{enumerate}
    \item \(\one\) denote the trivial real representation \(\R u\);
    \item \(\six\) denote the real representation \(u^\perp\), viewed as a complex \(3\)-dimensional Hermitian space via \(J_u\), i.e. $\three\oplus\threebar$;
    \item \(\eight\) denote the adjoint representation \(\mathfrak{su}(u^\perp,J_u)\);
    \item \(\mathbf{12}\) denotes the real representation whose complexification is \(S^2(\three)\oplus S^2(\threebar)\).
\end{enumerate}
\end{definition}

It is useful to keep in mind the following concrete geometric models.

\begin{remark}
The representation \(\six=u^\perp\) is the basic \(\SU(3)\)-module arising from the orthogonal splitting
\[
V=\R u\oplus u^\perp.
\]
The representation \(\eight\) is the space of skew-adjoint endomorphisms of \(u^\perp\) commuting with \(J_u\) and having complex trace zero:
\[
\eight
\cong
\{T\in \mathfrak{so}(u^\perp): TJ_u=J_uT,\ \tr_{\C}(T)=0\}.
\]
Finally, \(\mathbf{12}\) may be identified with the space of trace-free symmetric bilinear forms on \(u^\perp\) that are \(J_u\)-anti-invariant, equivalently those \(s\in S^2_0(u^\perp)\) satisfying
\[
s(J_u v,J_u w)=-s(v,w).
\]
\end{remark}

The significance of these descriptions is that they isolate the \(\SU(3)\)-types that will govern the right-eigenvalue problem. In the next subsection, we explain how these representations arise by restricting the \(G_2\)-modules \(\seven\), \(\fourteen\), and \(\twentyseven\) to the stabilizer of \(u\).

\subsubsection{Restriction of the \(G_2\)-modules \(\seven\), \(\fourteen\), and \(\twentyseven\)}

We now describe how the basic \(G_2\)-modules appearing in the decomposition of \(W\) restrict to the stabilizer \(\SU(3)\subset G_2\) of the fixed unit vector \(u\in V\). The discussion is guided by the geometric models introduced above.

\begin{proposition}\label{prop:G2-to-SU3-branching}
Under restriction from \(G_2\) to \(\SU(3)\), the irreducible \(G_2\)-modules \(\seven\), \(\fourteen\), and \(\twentyseven\) decompose as
\begin{equation}\label{eq:G2-to-SU3-branching}
\seven \cong \one\oplus \six,
\qquad
\fourteen \cong \eight\oplus \six,
\qquad
\twentyseven \cong \one\oplus \six\oplus \eight\oplus \mathbf{12}.
\end{equation}
\end{proposition}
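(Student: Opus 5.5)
The plan is to read the branching off the geometric models already in place, using the splitting $V=\R u\oplus u^\perp$ with $u^\perp=\six$ and the fact that the stabilizer of $u$ is $\SU(3)$ (Lemma~\ref{lem:G2-stabilizer-SU3}). The decomposition $\seven\cong\one\oplus\six$ is immediate, since $V=\R u\oplus u^\perp$ is $\SU(3)$-invariant. The case $\six$ is irreducible over $\R$: its complexification is $\three\oplus\threebar$, and $\three\not\cong\threebar$, so $u^\perp$ admits no invariant real subspace other than $0$ and itself.

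For $\fourteen$, I will start from
\[
\Lambda^2V=\Lambda^2(\R u\oplus u^\perp)=(u\wedge u^\perp)\oplus\Lambda^2u^\perp\cong\six\oplus\Lambda^2u^\perp.
\]
Next I split $\Lambda^2 u^\perp$ into its $J_u$-invariant and $J_u$-anti-invariant parts. The invariant part is $\mathfrak{u}(3)\cong\one\oplus\eight$, where the $\one$ is spanned by the Kähler form $\omega_u=u\lrcorner\varphi$. The anti-invariant part is a real form of $\Lambda^{2,0}\oplus\Lambda^{0,2}\cong\threebar\oplus\three$, so it is a copy of $\six$. This gives
\[
\Lambda^2V\cong\one\oplus 2\cdot\six\oplus\eight.
\]
Since $\Lambda^2V=\seven\oplus\fourteen$ and $\seven\cong\one\oplus\six$, complete reducibility and uniqueness of isotypic multiplicities force $\fourteen\cong\six\oplus\eight$. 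As a consistency check, $\dim\mathfrak{g}_2-\dim\mathfrak{su}(3)=6$, so the orthogonal complement of $\mathfrak{su}(3)$ in $\mathfrak{g}_2$ is the $\six$.

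For $\twentyseven$, the same approach applies with $S^2V=S^2(\R u)\oplus(u\cdot u^\perp)\oplus S^2u^\perp\cong\one\oplus\six\oplus S^2u^\perp$. I will split $S^2u^\perp$ into $J_u$-invariant symmetric forms, which form Hermitian forms $\cong\mathfrak{u}(3)\cong\one\oplus\eight$, and $J_u$-anti-invariant ones, which form a real form of $S^2\three\oplus S^2\threebar$, namely $\twelve$. The anti-invariant forms are automatically trace-free, since $\tr(s)=\tr(s(J_u\cdot,J_u\cdot))=-\tr(s)$. This gives $S^2V\cong 2\cdot\one\oplus\six\oplus\eight\oplus\twelve$. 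Removing the $G_2$-trivial summand $\one$ spanned by $g$ yields $\twentyseven\cong\one\oplus\six\oplus\eight\oplus\twelve$, and the dimension count $1+6+8+12=27$ confirms it.

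The main point needing care is the identification of the $J_u$-invariant and anti-invariant pieces as the claimed $\SU(3)$-modules. In particular I must check that $\eight$ and $\twelve$ are irreducible over $\R$, and that the anti-invariant part of $\Lambda^2u^\perp$ is isomorphic to $\six$ rather than to some other $6$-dimensional module. I will settle this by complexifying and using the standard $\SU(3)$ facts from \cite[Chapter~14]{FultonHarris1991RepresentationTheory}. First, $\Lambda^2\three\cong\threebar$ via the complex volume form, which is preserved because the group is $\SU(3)$ and not $\mathrm{U}(3)$; this is what makes the anti-invariant $2$-forms a copy of $\six$. Second, $\eight_\C$ is irreducible and $S^2\three\not\cong S^2\threebar$, so $\eight$ and $\twelve$ are irreducible real modules. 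Once these identifications are in place, the rest of the argument is bookkeeping of multiplicities.
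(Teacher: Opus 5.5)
Your proposal is correct. For $\seven$ and $\twentyseven$ it is essentially the paper's argument. The paper passes directly to $S^2_0V\cong\one\oplus\six\oplus S^2_0(u^\perp)$, whereas you split all of $S^2V$ and then cancel the $G_2$-trivial line spanned by $g$; the content is the same.

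The real difference is the $\fourteen$ case. The paper writes $\fourteen\cong\mathfrak g_2=\mathfrak{su}(3)\oplus\mathfrak m$, with $\mathfrak m$ the Killing-orthogonal complement, and asserts $\mathfrak m\cong u^\perp$ as $\SU(3)$-modules. A dimension count alone does not give this. One would add, e.g., that $X\mapsto Xu$ is an $\SU(3)$-equivariant map $\mathfrak g_2\to u^\perp$ with kernel $\mathfrak{su}(3)$, i.e.\ $\mathfrak m\cong T_uS^6$. You instead decompose all of $\Lambda^2V=(u\wedge u^\perp)\oplus\Lambda^2u^\perp$ by $J_u$-type, getting $\one\oplus 2\cdot\six\oplus\eight$, and cancel $\seven\cong\one\oplus\six$. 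This is legitimate by complete reducibility and uniqueness of isotypic multiplicities for the compact group $\SU(3)$.

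Your route is self-contained and runs parallel to the $\twentyseven$ computation. It also makes explicit two points the paper leaves implicit: the anti-invariant $2$-forms form a $\six$ because $\Lambda^2\three\cong\threebar$ for $\SU(3)$ (not $\mathrm U(3)$), and $\six$, $\eight$, $\twelve$ are irreducible over $\R$.

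The paper's route is shorter and more geometric, and it locates the $\eight$ inside $\fourteen$ directly as $\mathfrak{su}(3)$; cancellation only gives an abstract isomorphism. If you need the explicit models used later (e.g.\ in Lemma~\ref{lem:8-to-8}), note that $\Lambda^2_7V\cong\one\oplus\six$ contains no $\eight$, so the primitive $(1,1)$-forms on $u^\perp$ must lie in $\Lambda^2_{14}V$.
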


\begin{proof}
We treat the three \(G_2\)-modules in turn.

\medskip
\noindent\textbf{The standard representation \(\seven\).}
By construction,
\[
V=\R u\oplus u^\perp.
\]
The line \(\R u\) is fixed by \(\SU(3)\), hence is the trivial representation \(\one\). The orthogonal complement \(u^\perp\), equipped with the complex structure \(J_u\), is the real \(6\)-dimensional representation \(\six\). Thus
\[
\seven \cong \one\oplus \six.
\]

\medskip
\noindent\textbf{The adjoint representation \(\fourteen\).}
Recall that
\[
\fourteen\cong \mathfrak g_2.
\]
Under restriction to the stabilizer of \(u\), the Lie algebra \(\mathfrak g_2\) decomposes as
\[
\mathfrak g_2=\mathfrak{su}(3)\oplus \mathfrak m,
\]
where \(\mathfrak{su}(3)\) is the Lie algebra of the stabilizer and \(\mathfrak m\) is its orthogonal complement with respect to the Killing form. The first summand is the adjoint representation \(\eight\). The second is \(6\)-dimensional and may be identified, as an \(\SU(3)\)-module, with \(u^\perp\), hence with \(\six\). Therefore
\[
\fourteen \cong \eight\oplus \six.
\]

\medskip
\noindent\textbf{The traceless symmetric representation \(\twentyseven\).}
Recall that
\[
\twentyseven\cong S^2_0V.
\]
Using the orthogonal decomposition
\[
V=\R u\oplus u^\perp,
\]
we obtain
\[
S^2V
\cong
S^2(\R u)\oplus (u\odot u^\perp)\oplus S^2(u^\perp),
\]
where \(\odot\) denotes symmetrized tensor product. Passing to the traceless part gives
\[
S^2_0V
\cong
\one\oplus \six\oplus S^2_0(u^\perp),
\]
since \(S^2(\R u)\) only contains a \(\one\)-component, and \(u\odot u^\perp \cong \six\). It therefore remains to decompose \(S^2_0(u^\perp)\) as an \(\SU(3)\)-module.

Since \(u^\perp\) is a Hermitian complex \(3\)-space with complex structure \(J_u\), the real traceless symmetric tensors on \(u^\perp\) split into their \(J_u\)-invariant and \(J_u\)-anti-invariant parts:
\[
S^2_0(u^\perp)
\cong
\bigl(S^2_0(u^\perp)\bigr)^{J_u}
\oplus
\bigl(S^2_0(u^\perp)\bigr)^{-J_u}.
\]
The \(J_u\)-invariant traceless symmetric tensors are precisely the traceless Hermitian forms on \(u^\perp\), and these identify with the adjoint representation \(\eight\). The \(J_u\)-anti-invariant traceless symmetric tensors form the real \(12\)-dimensional representation \(\mathbf{12}\). Hence
\[
S^2_0(u^\perp)\cong \eight\oplus \mathbf{12},
\]
and therefore
\[
\twentyseven\cong \one\oplus \six\oplus \eight\oplus \mathbf{12}.
\]
This completes the proof.
\end{proof}

\begin{remark}
The four summands in the restriction of \(\twentyseven\cong S^2_0V\) admit concrete descriptions:
\begin{align*}
\one
&\cong
\Span\!\left\{\,u\odot u-\frac{1}{6}g|_{u^\perp}\right\},\\[4pt]
\six
&\cong
u\odot u^\perp,\\[4pt]
\eight
&\cong
\{\,s\in S^2_0(u^\perp): s(J_uv,J_uw)=s(v,w)\,\},\\[4pt]
\mathbf{12}
&\cong
\{\,s\in S^2_0(u^\perp): s(J_uv,J_uw)=-s(v,w)\,\}.
\end{align*}
These concrete models will be useful in the later block computations. Note that in the $\eight$ and  $\twelve$ representations, $s \in  S^2_0(u^\perp)$. To embed it in $S^2_0V$, we just extend it by $0$ in the $u$-direction. Equivalently, we require $u\lrcorner s=0$. 
\end{remark}
\begin{lemma}\label{lem:8-to-8}
Let
\[
s\in \eight \subset \twentyseven \cong S^2_0V,
\]
so that \(s\) is a traceless symmetric bilinear form on \(u^\perp\) satisfying
\[
s(J_uv,J_uw)=s(v,w)
\qquad
\text{for all }v,w\in u^\perp.
\]
Define a bilinear form \(\omega_s\) on  \(u^\perp\) by
\begin{equation}\label{eq:omega-s-definition}
\omega_s(v_1,v_2):=s(J_uv_1,v_2).
\end{equation}
Then \(\omega_s\) is skew-symmetric, \(J_u\)-invariant, and therefore determines an element
\[
\omega_s\in \eight\subset \fourteen\cong \Lambda^2_{14}V.
\]
\end{lemma}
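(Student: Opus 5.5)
The plan is to verify the three claims in the order stated. The only tool needed is the defining property $s(J_uv,J_uw)=s(v,w)$ together with $J_u^2=-\id$ on $u^\perp$. At the end, Schur's lemma for $\SU(3)$ will place $\omega_s$ inside $\Lambda^2_{14}V$.

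\textbf{Skew-symmetry and $J_u$-invariance.} For skew-symmetry, I would use invariance of $s$, then $J_u^2=-\id$, then symmetry of $s$:
\[
\omega_s(v_1,v_2)=s(J_uv_1,v_2)=s(J_u^2v_1,J_uv_2)=-s(v_1,J_uv_2)=-\omega_s(v_2,v_1).
\]
For $J_u$-invariance, I would compute in the same way:
\[
\omega_s(J_uv_1,J_uv_2)=-s(v_1,J_uv_2)=-s(J_uv_1,J_u^2v_2)=s(J_uv_1,v_2)=\omega_s(v_1,v_2).
\]
So $\omega_s$ is a real $(1,1)$-form on $u^\perp$. I would extend it by zero in the $u$-direction, so that $u\lrcorner\omega_s=0$, and regard it as an element of $\Lambda^2V$.

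\textbf{Landing in $\Lambda^2_{14}V$.} By the description of the $G_2$-summands of $W$ in Section~\ref{sec:background}, $\Lambda^2_{14}V$ is the kernel of $\omega\mapsto\varphi_{ijp}\omega_{ij}$. It therefore suffices to show $\varphi(p,\cdot,\cdot)$ pairs to zero with $\omega_s$ for every $p\in V$, and I would split into $p=u$ and $p\in u^\perp$.

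\emph{Case $p=u$.} On $u^\perp$, $\varphi(u,a,b)=g(J_ua,b)$ is the Kähler form. For an orthonormal basis $\{f_a\}$ of $u^\perp$ the pairing is, up to sign,
\[
\sum_a \omega_s(f_a,J_uf_a)=\sum_a s(J_uf_a,J_uf_a)=\tr s=0,
\]
because $s$ is traceless.

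\emph{Case $p\in u^\perp$.} Here $\omega_s$ has no $u$-components, so only the restriction $\alpha_p:=\varphi(p,\cdot,\cdot)|_{u^\perp}$ matters. I would avoid the explicit identity $\alpha_p(J_ua,J_ub)=-\alpha_p(a,b)$ and argue by equivariance instead. The map $p\mapsto\alpha_p$ is $\SU(3)$-equivariant from $\six$ into $\Lambda^2(u^\perp)$. The $J_u$-invariant part of $\Lambda^2(u^\perp)$ is $\mathfrak u(3)\cong\one\oplus\eight$, which contains no copy of $\six$. Hence the orthogonal projection of $\alpha_p$ onto $J_u$-invariant forms vanishes, so $\alpha_p$ is $J_u$-anti-invariant and pairs to zero with the $J_u$-invariant form $\omega_s$.

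\textbf{Identifying the $\eight$.} The map $s\mapsto\omega_s$ is linear, injective (since $J_u$ is invertible) and $\SU(3)$-equivariant, because $\SU(3)$ preserves $J_u$. Its image is therefore an irreducible copy of $\eight$ inside $\Lambda^2_{14}V\cong\fourteen$. By Proposition~\ref{prop:G2-to-SU3-branching}, $\fourteen\cong\eight\oplus\six$ has a unique $\eight$, so the image is exactly that summand, namely $\mathfrak{su}(3)\subset\mathfrak g_2$.

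The main obstacle is the case $p\in u^\perp$. If the Schur argument seems too slick, the fallback is to check $J_u$-anti-invariance of $\alpha_p$ directly, by expanding $\varphi(p,u\times a,u\times b)$ with the contraction identity $\varphi\varphi=gg-gg+\psi$ used in Lemma~\ref{lem:h-master}, which should reduce to $-\varphi(p,a,b)$ for $a,b,p\perp u$.
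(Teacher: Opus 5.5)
Your proposal is correct. The skew-symmetry and $J_u$-invariance computations are essentially the ones in the paper. The difference lies in the final step.

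The paper argues in one line. A traceless $s$ gives a primitive $(1,1)$-form. Primitive $(1,1)$-forms are $\mathfrak{su}(u^\perp,J_u)$, the Lie algebra of the stabilizer $\SU(3)\subset G_2$. That algebra therefore already sits inside $\mathfrak g_2\cong\Lambda^2_{14}V$.

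You instead check membership in $\Lambda^2_{14}V$ against the definition given in Section~\ref{sec:background}, namely the kernel of $\omega\mapsto\varphi_{ijp}\omega_{ij}$. The $u$-component of this map is contraction with the K\"ahler form $g(J_u\cdot,\cdot)$. That contraction is exactly the primitivity condition, and $\tr s=0$ kills it. The $u^\perp$-components vanish because $\varphi(p,\cdot,\cdot)|_{u^\perp}$ is $J_u$-anti-invariant, hence orthogonal to the $J_u$-invariant form $\omega_s$. Your Schur argument for this is sound, since $\six$ has no trivial or $8$-dimensional constituent. Equivalently, splitting $\varphi$ along $u$ shows this restriction is $p$ contracted into the real part of the complex volume form, which has type $(2,0)+(0,2)$. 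Multiplicity one of $\eight$ in $\fourteen$ then identifies the summand.

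The paper's route is shorter. However, it relies on the unstated identification of the stabilizer subalgebra with primitive $(1,1)$-forms sitting inside $\mathfrak g_2\cong\Lambda^2_{14}V$. Your route is self-contained relative to the paper's own definition of $\Lambda^2_{14}V$. It also shows which hypothesis kills which component of the cross-product map: tracelessness handles the $u$-direction, and $J_u$-invariance handles $u^\perp$.
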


\begin{proof}
Since \(s\) is \(J_u\)-invariant, we have
\[
s(J_uv_1,J_uv_2)=s(v_1,v_2)
\]
for all \(v_1,v_2\in u^\perp\). Using \(J_u^2=-\id\), it follows that
\[
s(J_uv_1,v_2)
=
-s(v_1,J_uv_2).
\]
Hence
\[
\omega_s(v_1,v_2)=s(J_uv_1,v_2)=-s(J_u v_2,v_1)=-\omega_s(v_2,v_1),
\]
since \(s\) is symmetric. Thus \(\omega_s\) is skew-symmetric.

Next,
\[
\omega_s(J_uv_1,J_uv_2)
=
s(J_u^2v_1,J_uv_2)
=
-s(v_1,J_uv_2)
=
s(J_uv_1,v_2)
=
\omega_s(v_1,v_2),
\]
so \(\omega_s\) is \(J_u\)-invariant.

Therefore \(\omega_s\) is a skew \(2\)-form on \(u^\perp\) of type \((1,1)\). Since \(s\) is traceless, the corresponding \(2\)-form is primitive, and hence lies in the adjoint representation
\[
\eight\subset \mathfrak{su}(u^\perp,J_u)\subset \fourteen.
\]
\end{proof}
\subsubsection{The \(\SU(3)\)-decomposition of \(W\)}

We now combine the preceding branching rules to obtain the \(\SU(3)\)-decomposition of the full space
\[
W=\O\tensor_{\R}V.
\]
Recall that, as a \(G_2\)-module,
\[
W\cong \one\oplus \seven\oplus \seven\oplus \fourteen\oplus \twentyseven.
\]
Restricting each summand to \(\SU(3)\) and using Proposition~\ref{prop:G2-to-SU3-branching}, we immediately obtain the isotypic decomposition of \(W\).

\begin{proposition}\label{prop:W-su3-decomposition}
As a real \(\SU(3)\)-module,
\begin{equation}\label{eq:W-su3-decomp}
W
\cong
\one^{\oplus 4}\oplus \six^{\oplus 4}\oplus \eight^{\oplus 2}\oplus \twelve.
\end{equation}
\end{proposition}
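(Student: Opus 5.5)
The plan is to restrict the $G_2$-decomposition \eqref{eq:W-as-G2-summands} summand by summand and then collect isotypic components. Restriction of representations commutes with direct sums. Hence, as an $\SU(3)$-module,
\[
W|_{\SU(3)}\cong \one|_{\SU(3)}\oplus \seven|_{\SU(3)}\oplus \seven|_{\SU(3)}\oplus \fourteen|_{\SU(3)}\oplus \twentyseven|_{\SU(3)}.
\]
The trivial $G_2$-module restricts to the trivial $\SU(3)$-module $\one$. For the remaining summands I will invoke Proposition~\ref{prop:G2-to-SU3-branching} directly.

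Substituting the branching rules \eqref{eq:G2-to-SU3-branching} gives
\[
W\cong \one\oplus(\one\oplus\six)\oplus(\one\oplus\six)\oplus(\eight\oplus\six)\oplus(\one\oplus\six\oplus\eight\oplus\twelve).
\]
Counting each type then finishes the argument:
\begin{itemize}
\item $\one$ appears four times, from the $G_2$-trivial summand, the two copies of $\seven$, and the $\twentyseven$;
\item $\six$ appears four times, from the two copies of $\seven$, the $\fourteen$, and the $\twentyseven$;
\item $\eight$ appears twice, from the $\fourteen$ and the $\twentyseven$;
\item $\twelve$ appears once, from the $\twentyseven$.
\end{itemize}
As a consistency check, the dimension is $4\cdot1+4\cdot6+2\cdot8+12=56=\dim W$.

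The only real subtlety is to confirm that $\one,\six,\eight,\twelve$ are pairwise non-isomorphic irreducible real $\SU(3)$-modules. This is what makes \eqref{eq:W-su3-decomp} a genuine isotypic decomposition, and it is what will later justify the block-diagonal form of $H_{u,Q}$. Different dimensions already separate them. Irreducibility over $\R$ holds because each complexification is of the form $\mathbf{3}\oplus\threebar$, $\mathfrak{sl}(3,\C)$, or $S^2\three\oplus S^2\threebar$. In the first and last cases the two complex summands are conjugate but not isomorphic, so the real form is irreducible. In the middle case the complexification is already irreducible and of real type. I expect this irreducibility check, rather than the bookkeeping, to be the only point requiring comment, and citing \cite[Chapter~14]{FultonHarris1991RepresentationTheory} should suffice.
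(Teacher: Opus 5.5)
Your proof is correct and follows the paper's argument: restrict the $G_2$-decomposition summand by summand, substitute the branching rules of Proposition~\ref{prop:G2-to-SU3-branching}, and count multiplicities. You also check that $\one,\six,\eight,\twelve$ are pairwise non-isomorphic real irreducibles, which the paper leaves implicit; that check is what makes \eqref{eq:W-su3-decomp} a genuine isotypic decomposition.
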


\begin{proof}
Using
\[
W\cong \one\oplus \seven\oplus \seven\oplus \fourteen\oplus \twentyseven,
\]
together with the restriction formulas
\[
\seven \cong \one\oplus \six,
\qquad
\fourteen \cong \eight\oplus \six,
\qquad
\twentyseven \cong \one\oplus \six\oplus \eight\oplus \twelve,
\]
we obtain
\begin{align*}
W
&\cong
\one
\oplus
(\one\oplus \six)
\oplus
(\one\oplus \six)
\oplus
(\eight\oplus \six)
\oplus
(\one\oplus \six\oplus \eight\oplus \twelve) \\
&\cong
\one^{\oplus 4}\oplus \six^{\oplus 4}\oplus \eight^{\oplus 2}\oplus \twelve.
\end{align*}
\end{proof}

The decomposition \eqref{eq:W-su3-decomp} is the representation-theoretic reason that the right-eigenvalue problem breaks into four blocks. Since the operators
\[
H_{u,Q}=h-Q\,R_{\uhat}
\]
are \(\SU(3)\)-equivariant by Proposition~\ref{prop:Hua-su3-equivariant}, each \(H_{u,Q}\) preserves the isotypic summands in \eqref{eq:W-su3-decomp}. Consequently, \(H_{u,Q}\) is determined by its action on the corresponding multiplicity spaces.

\begin{remark}\label{rem:W-su3-blocks}
The decomposition \eqref{eq:W-su3-decomp} yields four blocks:
\begin{enumerate}
    \item a \(4\times 4\) block on the multiplicity space of \(\one^{\oplus 4}\);
    \item a \(4\times 4\) block on the multiplicity space of \(\six^{\oplus 4}\);
    \item a \(2\times 2\) block on the multiplicity space of \(\eight^{\oplus 2}\);
    \item a scalar block on the irreducible summand \(\twelve\).
\end{enumerate}
Thus the octonionic right-eigenvalue problem on the fixed slice
\[
\C_{\uhat}=\R\oplus \R\uhat
\]
reduces to four explicit finite-dimensional calculations. This is the basic mechanism behind the block analysis in the next section.
\end{remark}

It is useful to keep track of the origins of these multiplicity spaces inside the \(G_2\)-decomposition of \(W\):
\begin{itemize}
    \item the four copies of \(\one\):
    \(
    \one_{\Gtwo} \subset W, \text{ two from }  \seven,\text{one from }\twentyseven;
    \)
    \item the four copies of \(\six\):
    \(
    \text{ two from }\seven, \text{ one from } \fourteen,\text{ one from } \twentyseven;
    \)
    \item the two copies of \(\eight\) come from
    \(
     \fourteen \text{ and } \twentyseven;
    \)
    \item the \(\twelve\) occurs only inside \(\twentyseven\cong S^2_0V\).
\end{itemize}
This bookkeeping will be useful when constructing explicit bases for the later block computations.

\section{Block analysis of \(H_{u,Q}\)}\label{sec:block-analysis}

We now analyze the operators \(H_{u,Q}=h-Q R_{\uhat}\) block-by-block with respect to the \(\SU(3)\)-decomposition
\[
W\cong \one^{\oplus 4}\oplus \six^{\oplus 4}\oplus \eight^{\oplus 2}\oplus \twelve.
\]
By Proposition~\ref{prop:right-eigenvalue-reduction}, for the operator $h$, a right eigenvalue 
\(\lambda=\mu+Q\uhat\) with \(Q\neq0\) corresponds to an ordinary real
eigenvalue \(\mu\) of the operator \(H_{u,Q}\).  Thus, on each \(\SU(3)\)-isotypic block, we determine whether \(H_{u,Q}\) has real eigenvalues, and when it does, compute the corresponding values of \(\mu\).

The action of \(h\) itself has already been determined from the \(G_2\)-decomposition in Section~\ref{sec:real-spectrum}. The new ingredient is the symmetry-breaking term \(R_{\uhat}\). Indeed, \(h\) preserves \(G_2\)-types, while \(R_{\uhat}\) preserves only the \(\SU(3)\)-isotypic decomposition associated to the chosen unit \(\uhat\). Since a fixed \(\SU(3)\)-type may occur in several different \(G_2\)-summands, \(R_{\uhat}\) can mix these \(G_2\)-components. This mixing is precisely what the block computations below record.

\subsection{The \(\twelve\)-block}

We begin with the \(\twelve\)-summand. This is the simplest block, because it occurs with multiplicity one and lies entirely inside
\[
\twentyseven\cong S^2_0V.
\]
Recall that, after fixing the unit vector \(u\in V\), the orthogonal complement \(u^\perp\) carries the complex structure
\[
J=J_u,\qquad J(v)=u\times v.
\]
The \(\twelve\)-summand is the space of traceless symmetric tensors on \(u^\perp\) that are \(J\)-anti-invariant:
\[
\twelve
=
\left\{
s\in S^2_0(u^\perp):
s(Jv,Jw)=-s(v,w)
\right\}.
\]
We regard such an \(s\) as an element of \(S^2_0V\) by extending it by zero in the \(u\)-direction; equivalently,
\[
u\lrcorner s=0.
\]

\begin{lemma}\label{lem:twelve-block}
On the \(\twelve\)-summand, the operator \(H_{u,Q}\) acts as
\begin{equation}\label{eq:twelve-block-H}
H_{u,Q}|_{\twelve}
=
-Q R_{\uhat}.
\end{equation}
Moreover, with respect to the complex structure induced by right multiplication by \(\uhat\), this block has real eigenvalues of $H_{u,Q}$ only when \(Q=0\). Consequently, for \(Q\neq0\), the \(\twelve\)-summand contributes no
non-real right eigenvalues \(\lambda=\mu+Q\uhat\) of \(h\).
\end{lemma}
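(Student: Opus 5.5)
Since the \(\twelve\)-summand lies entirely inside \(\twentyseven\cong S^2_0V\), Lemma~\ref{lem:h-block-computation} gives \(h|_{\twentyseven}=0\), hence \(h\) vanishes on \(\twelve\). We then need that \(R_{\uhat}\) preserves the \(\twelve\)-summand, so that \eqref{eq:twelve-block-H} is meaningful as a block. By Proposition~\ref{prop:Hua-su3-equivariant}, \(R_{\uhat}\) is \(\SU(3)\)-equivariant. The \(\twelve\) occurs with multiplicity one in \(W\) by Proposition~\ref{prop:W-su3-decomposition}, so it is an isotypic component. Every \(\SU(3)\)-equivariant map therefore preserves it, and \(H_{u,Q}|_{\twelve}=h|_{\twelve}-QR_{\uhat}|_{\twelve}=-QR_{\uhat}|_{\twelve}\).

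For the spectral statement, use the identity \(R_{\uhat}^2=-\id\) on \(W\). This holds because \((x\uhat)\uhat=x(\uhat\uhat)=-|\uhat|^2x=-x\) for any \(x\in\O\). The justification is alternativity: the subalgebra generated by \(x\) and \(\uhat\) is associative by Artin's theorem, or equivalently the right alternative law \((xy)y=xy^2\) applies. Since \(R_{\uhat}\) acts only on the \(\O\)-factor, the identity passes to \(W=\O\otimes V\), and hence to the invariant subspace \(\twelve\). So \(R_{\uhat}|_{\twelve}\) is a complex structure on the \(12\)-dimensional real space \(\twelve\). It is moreover orthogonal, being skew-adjoint since \(\ip{x\uhat}{y}=\ip{x}{y\bar{\uhat}}=-\ip{x}{y\uhat}\). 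Thus \(-QR_{\uhat}|_{\twelve}\) squares to \(-Q^2\id\), and any real eigenvalue \(\mu\) with eigenvector \(\what\neq0\) satisfies \(\mu^2\what=-Q^2\what\). This forces \(\mu=Q=0\). Equivalently, the eigenvalues of \(-QR_{\uhat}|_{\twelve}\), over \(\C\), are \(\pm iQ\), each with multiplicity \(6\).

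Consequently, for \(Q\neq0\), the operator \(H_{u,Q}-\mu\,\id\) is invertible on \(\twelve\) for every real \(\mu\). By Proposition~\ref{prop:right-eigenvalue-reduction}, no right eigenvector supported in \(\twelve\) exists with \(\lambda=\mu+Q\uhat\) non-real. For \(Q=0\) we recover \(\mu=0\), which is consistent with Theorem~\ref{thm:real-spectrum}.

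The step needing the most care is the invariance of \(\twelve\) under \(R_{\uhat}\). Right multiplication by \(\uhat\) mixes the real and imaginary parts of \(W\cong V\oplus(\Vhat\otimes V)\), so it does not visibly preserve \(S^2_0V\). I will rely on the multiplicity-one argument rather than a direct computation. If a concrete check is wanted, I would compute \((w_{ij}\ehat{i}\uhat)\otimes e_j\) for \(w=s\in\twelve\). Using \(u\lrcorner s=0\) and \(J\)-anti-invariance, the real part \(-s(\cdot,u)\) vanishes. The imaginary part \(\varphi_{iuk}s_{ij}=-(s\circ J)\) is again symmetric, traceless and \(J\)-anti-invariant, since \(s(J\cdot,\cdot)\) is symmetric exactly when \(s\) is anti-invariant. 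This confirms that \(R_{\uhat}\) acts on \(\twelve\) as (minus) precomposition with \(J\), and that it squares to \(-\id\).
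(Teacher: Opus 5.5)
Your proof is correct and follows the paper's argument: $h$ vanishes on $\twentyseven\supset\twelve$, $R_{\uhat}$ preserves $\twelve$, and alternativity gives $R_{\uhat}^2=-\id$, so $(H_{u,Q}|_{\twelve})^2=-Q^2\id$. The only difference is how you get invariance of $\twelve$: you use $\SU(3)$-equivariance together with multiplicity one (Proposition~\ref{prop:W-su3-decomposition}), whereas the paper checks it directly by noting that $R_{\uhat}$ acts as $-J$ on the coefficient index, which is the same computation you sketch as a cross-check.
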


\begin{proof}
Let
\[
s\in \twelve\subset S^2_0V\subset W.
\]
In components, this means that \(s\) is represented by an element
\[
\what=\what_q\tensor e_q,
\qquad
\what_q=s_{iq}\ehat{i},
\]
with
\[
s_{iq}=s_{qi},
\qquad
s_{pp}=0,
\qquad
u^q s_{iq}=0,
\qquad
s(Jv,Jw)=-s(v,w).
\]
Since \(s\in\twelve\subset\twentyseven\), Lemma~\ref{lem:h-block-computation}
gives \(h(s)=0\). Therefore
\[
H_{u,Q}(s)=-Q R_{\uhat}(s).
\]

It remains to note that \(R_{\uhat}\) preserves \(\twelve\). If
\(\hat x\in\Vhat\cap\uhat^\perp\), then
\[
\hat x\uhat=-\widehat{Jx}.
\]
Thus \(R_{\uhat}\) applies \(-J\) to the octonion coefficient index of \(s\).
Since \(s\) is symmetric and \(J\)-anti-invariant, it satisfies
\[
s(Jx,y)=s(x,Jy),
\]
so applying \(J\) to one index again gives a symmetric \(J\)-anti-invariant
tensor. Hence \(R_{\uhat}\) preserves \(\twelve\).

By alternativity of octonion multiplication,
\[
R_{\uhat}^2=R_{\uhat^2}=-\id,
\]
so \(R_{\uhat}\) defines a complex structure on \(\twelve\). Therefore
\[
(H_{u,Q}|_{\twelve})^2=-Q^2\id.
\]
For \(Q\neq0\), its eigenvalues are purely imaginary after complexification, so it has no real eigenvalues. By Proposition~\ref{prop:right-eigenvalue-reduction}, the \(\twelve\)-block contributes no non-real right eigenvalues of \(h\).
\end{proof}
\subsection{The \(\one^{\oplus 4}\)-block}

We next analyze the \(\one^{\oplus 4}\)-isotypic component. In this block it is useful to choose a basis adapted to the \(G_2\)-decomposition of \(W\). Let
\begin{equation}\label{eq:one-block-basis}
\begin{aligned}
\eta_1&:=1\tensor u,\\
\eta_2&:=g_{iq}\,\ehat{i}\tensor e_q,\\
\eta_3&:=\varphi_{iqk}u_k\,\ehat{i}\tensor e_q,\\
\eta_4&:=\left(u_i u_q-\frac17 g_{iq}\right)\ehat{i}\tensor e_q.
\end{aligned}
\end{equation}
Then \(\eta_1\) lies in one copy of the \(\seven\)-summand, \(\eta_2\) lies in the \(\one\)-summand, \(\eta_3\) lies in the other copy of the \(\seven\)-summand, and \(\eta_4\) lies in the \(\one\)-summand inside \(\twentyseven\).

Thus a general element of the \(\one^{\oplus 4}\)-block is written as
\[
\what=A\eta_1+B\eta_2+C\eta_3+D\eta_4.
\]

\begin{remark}\label{rem:one-block-basis-not-orthonormal}
The basis \((\eta_1,\eta_2,\eta_3,\eta_4)\) is adapted to the
\(G_2\)-decomposition, but it is not orthonormal.  The four vectors are
mutually orthogonal, and their squared norms are
\[
\|\eta_1\|^2=1,\qquad
\|\eta_2\|^2=7,\qquad
\|\eta_3\|^2=6,\qquad
\|\eta_4\|^2=\frac67.
\]
Consequently, if
\[
\what=A\eta_1+B\eta_2+C\eta_3+D\eta_4,
\]
then
\[
\|\what\|^2
=
A^2+7B^2+6C^2+\frac67D^2.
\]
In particular, matrices written in this basis need not appear symmetric,
even when they represent self-adjoint operators with respect to the natural
inner product.
\end{remark}

\begin{lemma}\label{lem:one-block}
With respect to the ordered basis
\[
(\eta_1,\eta_2,\eta_3,\eta_4),
\]
the operator \(H_{u,Q}=h-Q R_{\uhat}\) is represented on the \(\one^{\oplus4}\)-block by the matrix
\begin{equation}\label{eq:one-block-matrix}
M_{\one}(Q)
=
\begin{pmatrix}
-1 & Q & 6 & \frac67 Q\\[2pt]
-\frac17 Q & -7 & \frac67 Q & 0\\[2pt]
1 & -Q & 2 & \frac17 Q\\[2pt]
-Q & 0 & -Q & 0
\end{pmatrix}.
\end{equation}
Equivalently, if
\[
\what=A\eta_1+B\eta_2+C\eta_3+D\eta_4,
\]
then
\[
H_{u,Q}(\what)
=
A'\eta_1+B'\eta_2+C'\eta_3+D'\eta_4,
\]
where
\[
\begin{aligned}
A'&=-A+QB+6C+\frac67QD,\\
B'&=-\frac17QA-7B+\frac67QC,\\
C'&=A-QB+2C+\frac17QD,\\
D'&=-QA-QC.
\end{aligned}
\]
\end{lemma}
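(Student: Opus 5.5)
The matrix splits into two pieces, $M_{\one}(Q)=M_h-Q\,M_R$, and I will compute them separately. Here $M_h$ is the matrix of $h$ and $M_R$ is the matrix of $R_{\uhat}$, both written in the basis $(\eta_1,\eta_2,\eta_3,\eta_4)$.

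\emph{The $Q$-independent part.} I read $M_h$ directly off Lemma~\ref{lem:h-block-computation}. The vector $\eta_2$ spans the $G_2$-trivial summand, so $h\eta_2=-7\eta_2$. The vector $\eta_4$ lies in $\twentyseven$, so $h\eta_4=0$. The pair $(\eta_1,\eta_3)$ is exactly the image of $(v,u')=(u,0)$ and $(0,u)$ under the parametrization of the $\seven\oplus\seven$ block, because $\eta_3$ has components $w_{iq}=\varphi_{kiq}u_k=\varphi_{iqk}u_k$. The $2\times2$ matrix \eqref{eq:h-seven-block-matrix} then gives $h\eta_1=-\eta_1+\eta_3$ and $h\eta_3=6\eta_1+2\eta_3$. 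These reproduce the entries $-1,6,1,2,-7,0$ of \eqref{eq:one-block-matrix}.

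\emph{The $R_{\uhat}$ part.} Next I apply $R_{\uhat}$ to each $\eta_a$ using the component rule $\ehat{i}\uhat=-u_i\,1+\varphi_{ikm}u_k\ehat{m}$ and $1\cdot\uhat=\uhat$.
\begin{itemize}
\item $R_{\uhat}\eta_1=u_i\,\ehat{i}\tensor u_q e_q$. This is the tensor $u_iu_q$, which I decompose as $\frac17\eta_2+\eta_4$.
\item $R_{\uhat}\eta_2=\ehat{q}\uhat\tensor e_q=-1\tensor u+\varphi_{qkm}u_k\ehat{m}\tensor e_q$. The second term equals $-\eta_3$ after reordering the indices of $\varphi$. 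So $R_{\uhat}\eta_2=-\eta_1-\eta_3$.
\item $R_{\uhat}\eta_3$: the real part is $-\varphi_{iqk}u_ku_i=0$. The imaginary part is $\varphi_{iqk}u_k\varphi_{ilm}u_l\,\ehat{m}\tensor e_q$. Applying the contraction identity $\varphi\varphi=gg-gg+\psi$ and discarding $\psi_{\cdots}u_ku_l=0$, I am left with $(g_{ql}g_{km}-g_{qm}g_{kl})u_ku_l$. This gives the tensor $u_mu_q-g_{mq}$, which equals $-\frac67\eta_2+\eta_4$.
\item $R_{\uhat}\eta_4=u_iu_q\,\ehat{i}\uhat\tensor e_q-\frac17R_{\uhat}\eta_2$. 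The first term is $-u_q\,1\tensor e_q+0$, since $\varphi_{ikm}u_iu_k=0$. Hence $R_{\uhat}\eta_4=-\eta_1+\frac17(\eta_1+\eta_3)=-\frac67\eta_1+\frac17\eta_3$.
\end{itemize}
Multiplying these columns by $-Q$ and adding them to $M_h$ should give exactly \eqref{eq:one-block-matrix}. For example, the column of $\eta_3$ becomes $(6,\frac67Q,2,-Q)^T$, and the column of $\eta_4$ becomes $(\frac67Q,0,-\frac17Q,0)^T$, wait — the latter should read $(\frac67Q,0,-\frac17 Q,0)$ versus the stated $\frac17Q$. I will recheck this sign carefully, since sign errors of exactly this kind are where I expect trouble.

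\emph{Main obstacle.} The hard part is keeping the index orderings and sign conventions of $\varphi$ consistent between the definition of $\eta_3$ and the product rule $\ehat{i}\ehat{j}=-g_{ij}+\varphi_{ijk}\ehat{k}$, particularly in the $R_{\uhat}\eta_2$ and $R_{\uhat}\eta_4$ computations. I will check every step against two independent constraints. First, the sign must agree with the $\seven$-block convention $w_{ij}=\varphi_{kij}u_k$ already used in Lemma~\ref{lem:h-block-computation}. Second, $R_{\uhat}^2=-\id$ (by alternativity) must hold on the block, and $R_{\uhat}$ must be skew-adjoint with respect to the weighted inner product of Remark~\ref{rem:one-block-basis-not-orthonormal}. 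If $M_R$ satisfies $M_R^2=-I$ and $GM_R$ is skew-symmetric, where $G=\mathrm{diag}(1,7,6,\frac67)$, then the signs are confirmed. This consistency test is decisive in settling the $\eta_4$ column.
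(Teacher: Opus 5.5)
Your strategy is the paper's: split $M_{\one}(Q)=M_h-QM_R$, read $M_h$ off Lemma~\ref{lem:h-block-computation}, and compute $R_{\uhat}$ on each $\eta_a$ from $\ehat{i}\uhat=-u_i1+\varphi_{ikm}u_k\ehat{m}$. Your $M_h$, $R_{\uhat}\eta_1$ and $R_{\uhat}\eta_3$ are correct. As written, however, the proposal does not prove the lemma. It contains a sign error in $R_{\uhat}\eta_2$, and the resulting matrix disagrees with \eqref{eq:one-block-matrix}; you flag this but leave it unresolved.

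\textbf{The error.} The faulty step is the claim that $\varphi_{qkm}u_k\ehat{m}\tensor e_q$ equals $-\eta_3$. Renaming $i\to m$ in the definition gives $\eta_3=\varphi_{mqk}u_k\ehat{m}\tensor e_q$. The map $(q,k,m)\mapsto(m,q,k)$ is a cyclic, hence even, permutation, so $\varphi_{qkm}=\varphi_{mqk}$ and the term is $+\eta_3$. This is the same cyclic identity you used correctly for $\varphi_{kiq}=\varphi_{iqk}$ when placing $\eta_3$ in the $\seven\oplus\seven$ block. Hence $R_{\uhat}\eta_2=-\eta_1+\eta_3$, and therefore $R_{\uhat}\eta_4=-\eta_1-\tfrac17(-\eta_1+\eta_3)=-\tfrac67\eta_1-\tfrac17\eta_3$. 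With these corrected columns, $M_h-QM_R$ reproduces \eqref{eq:one-block-matrix} exactly, as in the paper.

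\textbf{Two entries are affected, not one.} With your $R_{\uhat}\eta_2$, the second column of $M_h-QM_R$ would be $(Q,-7,Q,0)^T$ instead of the stated $(Q,-7,-Q,0)^T$; you did not compare this column.

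\textbf{Your consistency test is sound.} $R_{\uhat}$ is skew-adjoint because $\bar{\uhat}=-\uhat$, and $R_{\uhat}^2=-\id$ by right alternativity. The Gram matrix $G=\mathrm{diag}(1,7,6,\tfrac67)$ is correct. The test catches your error: for your $M_R$, $(M_R^2)_{22}=-\tfrac17+\tfrac67=\tfrac57\neq-1$, and $(GM_R)_{23}=(GM_R)_{32}=-6$, which is not skew. Granting your first and third columns, skewness of $GM_R$ forces $(M_R)_{32}=1$ and $(M_R)_{34}=-\tfrac17$. So the test locates the error in the $\eta_3$-components of both $R_{\uhat}\eta_2$ and $R_{\uhat}\eta_4$, not only in the $\eta_4$ column.
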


\begin{proof}
We compute the two pieces \(h\) and \(R_{\uhat}\) separately.

First, the action of \(h\) is already known from the real spectral calculation in Section~\ref{sec:real-spectrum}. In the present \(G_2\)-adapted basis, one has
\[
h(\eta_1)=-\eta_1+\eta_3,
\qquad
h(\eta_3)=6\eta_1+2\eta_3,
\]
because \(\eta_1\) and \(\eta_3\) span the two copies of the \(\seven\)-summand. Also,
\[
h(\eta_2)=-7\eta_2,
\qquad
h(\eta_4)=0,
\]
since \(\eta_2\) lies in the \(\one\)-summand and \(\eta_4\) lies in the \(\twentyseven\)-summand. Therefore
\[
[h]_{\one^{\oplus4}}
=
\begin{pmatrix}
-1 & 0 & 6 & 0\\
0 & -7 & 0 & 0\\
1 & 0 & 2 & 0\\
0 & 0 & 0 & 0
\end{pmatrix}.
\]

It remains to compute the action of \(R_{\uhat}\). We claim that

\begin{subequations}\label{eq:Ru-one}
\begin{align}
R_{\uhat}\eta_1&=\frac17\eta_2+\eta_4, \label{eq:Ru-one-1}\\
R_{\uhat}\eta_2&=-\eta_1+\eta_3, \label{eq:Ru-one-2}\\
R_{\uhat}\eta_3&=-\frac67\eta_2+\eta_4, \label{eq:Ru-one-3}\\
R_{\uhat}\eta_4&=-\frac67\eta_1-\frac17\eta_3. \label{eq:Ru-one-4}
\end{align}
\end{subequations}

Indeed,
\[
R_{\uhat}\eta_1
=
u_q\uhat\tensor e_q
=
u_i u_q\,\ehat{i}\tensor e_q.
\]
Since
\[
u_i u_q\,\ehat{i}\tensor e_q
=
\left(u_i u_q-\frac17g_{iq}\right)\ehat{i}\tensor e_q
+
\frac17g_{iq}\ehat{i}\tensor e_q,
\]
we obtain
\[
R_{\uhat}\eta_1=\eta_4+\frac17\eta_2.
\]

Next,
\[
R_{\uhat}\eta_2
=
g_{iq}(\ehat{i}\uhat)\tensor e_q.
\]
Using
\[
\ehat{i}\uhat=-u_i\,1+\varphi_{ik\ell}u_k\ehat{\ell},
\]
and relabeling indices, this gives
\[
R_{\uhat}\eta_2
=
-u_q\,1\tensor e_q
+\varphi_{\ell qk}u_k\ehat{\ell}\tensor e_q
=
-\eta_1+\eta_3.
\]

For \(\eta_3\), we compute
\[
R_{\uhat}\eta_3
=
\varphi_{iqk}u_k(\ehat{i}\uhat)\tensor e_q.
\]
Again using
\[
\ehat{i}\uhat=-u_i\,1+\varphi_{i\ell m}u_\ell\ehat{m},
\]
the real part vanishes because \(\varphi_{iqk}u_i u_k=0\). The imaginary part is
\[
\varphi_{iqk}\varphi_{i\ell m}u_k u_\ell\,\ehat{m}\tensor e_q.
\]
The standard contraction identity gives
\begin{align*}
\varphi_{iqk}\varphi_{i\ell m}u_k u_\ell
&=
\bigl(g_{q\ell}g_{km}-g_{qm}g_{k\ell}+\psi_{q\ell km}\bigr)u_k u_\ell \\
&=
u_q u_m-g_{qm}.
\end{align*}
Using
\[
u_mu_q\ehat{m}\tensor e_q=\eta_4+\frac17\eta_2,
\]
we get
\[
R_{\uhat}\eta_3
= (u_mu_q - g_{mq})\ehat{m}\tensor e_q
=
-\frac67\eta_2+\eta_4.
\]

Finally,
\[
R_{\uhat}\eta_4
=
R_{\uhat}\left(u_i u_q\ehat{i}\tensor e_q-\frac17g_{iq}\ehat{i}\tensor e_q\right).
\]
The first term gives
\[
u_i u_q(\ehat{i}\uhat)\tensor e_q
=
(\uhat\uhat)\tensor u
=
-1\tensor u
=
-\eta_1,
\]
while the second term contributes
\[
-\frac17R_{\uhat}\eta_2
=
-\frac17(-\eta_1+\eta_3)
=
\frac17\eta_1-\frac17\eta_3.
\]
Therefore
\[
R_{\uhat}\eta_4
=
-\frac67\eta_1-\frac17\eta_3.
\]

Equations \eqref{eq:Ru-one-1}--\eqref{eq:Ru-one-4} give
\begin{equation}\label{eq:one-block-Ru-matrix}
[R_{\uhat}]_{\one^{\oplus4}}
=
\begin{pmatrix}
0 & -1 & 0 & -\frac67\\[2pt]
\frac17 & 0 & -\frac67 & 0\\[2pt]
0 & 1 & 0 & -\frac17\\[2pt]
1 & 0 & 1 & 0
\end{pmatrix}.
\end{equation}
Since
\[
H_{u,Q}=h-Q R_{\uhat},
\]
we obtain
\[
M_{\one}(Q)
=
[h]_{\one^{\oplus4}}-Q[R_{\uhat}]_{\one^{\oplus4}},
\]
which is exactly
\[
\begin{pmatrix}
-1 & Q & 6 & \frac67 Q\\[2pt]
-\frac17 Q & -7 & \frac67 Q & 0\\[2pt]
1 & -Q & 2 & \frac17 Q\\[2pt]
-Q & 0 & -Q & 0
\end{pmatrix}.
\]
This proves the lemma.
\end{proof}
\begin{corollary}\label{cor:one-block-quartic}
The \(\one^{\oplus 4}\)-block contributes octonionic right eigenvalues
\[
\lambda=\mu+Q\uhat
\]
of \(h\) precisely when \(\mu\) and $Q$ are real roots of the quartic equation
\begin{equation}\label{eq:one-block-quartic}
p_{\one}(\mu,Q)
=\mu^4
+6\mu^3
+(2Q^2-15)\mu^2
+(6Q^2-56)\mu
+Q^4+Q^2
=0.
\end{equation}
\end{corollary}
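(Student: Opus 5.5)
The plan is to reduce the statement to a single $4\times4$ determinant computation. By Proposition~\ref{prop:right-eigenvalue-reduction}, $\lambda=\mu+Q\uhat$ is a right eigenvalue of $h$ with eigenvector $\what$ if and only if $H_{u,Q}\what=\mu\what$. By Proposition~\ref{prop:Hua-su3-equivariant}, $H_{u,Q}$ preserves the $\one^{\oplus4}$-isotypic component. This component is a $4$-dimensional real space spanned by $\eta_1,\dots,\eta_4$, and it coincides with its own multiplicity space because the representation is trivial. Hence this block contributes the eigenvalue $\lambda$ exactly when $\ker\bigl(M_{\one}(Q)-\mu I\bigr)\neq0$, with $M_{\one}(Q)$ as in Lemma~\ref{lem:one-block}. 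Since the $\eta_i$ form a basis (orthogonal but not orthonormal, cf. Remark~\ref{rem:one-block-basis-not-orthonormal}), the kernel condition is basis-independent and equivalent to $\det\bigl(\mu I-M_{\one}(Q)\bigr)=0$. Note that $\det(\mu I-M)=\det(M-\mu I)$ for a $4\times4$ matrix. So it suffices to show $\det\bigl(\mu I-M_{\one}(Q)\bigr)=p_{\one}(\mu,Q)$.

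To organize the expansion, I would first clear denominators by rescaling the basis, for instance replacing $\eta_2$ by $7\eta_2$ and $\eta_4$ by $7\eta_4$. This conjugation leaves the determinant unchanged but makes every entry a polynomial in $Q$ with integer coefficients. I would then expand along the fourth row of $\mu I-M_{\one}(Q)$, whose entries are $(Q,0,Q,\mu)$ in the original basis. This leaves three $3\times3$ cofactors. The entry $\mu$ multiplies the determinant of the upper-left block, which has the form $\mu^3+6\mu^2-15\mu-56$ plus $Q^2$-corrections. The two entries equal to $Q$ produce cofactors that each carry at least one further factor of $Q$.

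Two checks constrain the result. First, at $Q=0$ the matrix decouples into the $\seven\oplus\seven$ block $\begin{pmatrix}-1&6\\1&2\end{pmatrix}$ together with the scalars $-7$ and $0$. The determinant is therefore $\mu(\mu+7)(\mu^2-\mu-8)=\mu^4+6\mu^3-15\mu^2-56\mu$, which matches $p_{\one}(\mu,0)$. Second, the $Q$-dependence should be even. Flipping the sign of $Q$ amounts to conjugating by $\mathrm{diag}(1,-1,1,-1)$, since $Q$ appears only in entries linking $\{\eta_1,\eta_3\}$ with $\{\eta_2,\eta_4\}$. Consequently, only the $Q^2$ and $Q^4$ coefficients remain to be determined.

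The main obstacle is the bookkeeping for these coefficients, and I would collect them directly. The $Q^4$ term comes only from permutations that use four off-diagonal $Q$-entries pairing $\{1,3\}$ with $\{2,4\}$. The claim is that these sum to $Q^4$ with no $\mu$-dependence. The $Q^2$ terms should assemble to $2\mu^2+6\mu+1$. For these I would compute the $2\times2$ principal-minor-type contributions: each pair of entries $(i,j),(j,i)$ with $i\in\{1,3\}$, $j\in\{2,4\}$ gives $-m_{ij}m_{ji}$ times the complementary $2\times2$ minor. In addition, there are $3$-cycle and $4$-cycle contributions involving the entries $6$ and $1$ of the $\seven\oplus\seven$ block. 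If the hand expansion becomes error-prone, I would verify the final polynomial by evaluating $\det$ at a few small integer values of $(\mu,Q)$, such as $Q=1$ with $\mu=0$ and $\mu=1$. For example, at $\mu=0$ the determinant should equal $Q^4+Q^2$.
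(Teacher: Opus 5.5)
Your proposal is correct and follows the paper's proof. Both restrict $H_{u,Q}$ to the $\one^{\oplus4}$-block via Lemma~\ref{lem:one-block}, invoke Proposition~\ref{prop:right-eigenvalue-reduction}, and identify the quartic as $\det(\mu I-M_{\one}(Q))$. The paper simply states this determinant (computed in Maple), and your predicted $Q^4$ and $(2\mu^2+6\mu+1)Q^2$ coefficients do check out. One small slip in the bookkeeping: no diagonal rescaling of the $\eta_i$ can clear all denominators, because $m_{12}m_{21}=-Q^2/7$ is invariant under diagonal conjugation (your choice $\eta_2\mapsto 7\eta_2$, $\eta_4\mapsto 7\eta_4$ actually turns the $(2,1)$ entry into $-Q/49$), so just carry the fractions through the row-4 expansion.
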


\begin{proof}
By Lemma~\ref{lem:one-block}, the restriction of \(H_{u,Q}\) to the \(\one^{\oplus4}\)-block is represented by the matrix \(M_{\one}(Q)\). Its characteristic polynomial is
\[
\det(M_{\one}(Q)-\mu I)
=
\mu^4
+6\mu^3
+(2Q^2-15)\mu^2
+(6Q^2-56)\mu
+Q^4+Q^2.
\]
Thus \(\mu\) is an ordinary real eigenvalue of \(H_{u,Q}\) on this block precisely when the displayed quartic vanishes. By Proposition~\ref{prop:right-eigenvalue-reduction}, this is equivalent to saying that
\[
\lambda=\mu+Q\uhat
\]
is an octonionic right eigenvalue of \(h\) arising from the \(\one^{\oplus4}\)-block.
\end{proof}

\begin{remark}
As a consistency check, setting \(Q=0\) in the quartic from Corollary~\ref{cor:one-block-quartic} gives
\[
p_{\one}(\mu,0)
=
\mu^4+6\mu^3-15\mu^2-56\mu
=
\mu(\mu+7)(\mu^2-\mu-8).
\]
Thus the \(\one^{\oplus4}\)-block recovers exactly the real eigenvalues of \(h\) coming from the \(G_2\)-summands represented inside this \(\SU(3)\)-isotypic component: the \(\mathbf{27}\)-summand contributes \(\mu=0\), the \(\mathbf 1\)-summand contributes \(\mu=-7\), and the two \(\mathbf 7\)-summands contribute the roots of
\[
\mu^2-\mu-8=0.
\]
This agrees with the real spectral computation of Theorem~\ref{thm:real-spectrum}.
\end{remark}
\subsection{The \(\six^{\oplus 4}\)-block}

We next analyze the \(\six^{\oplus 4}\)-isotypic component. Let
\[
J=J_u:u^\perp\to u^\perp,
\qquad
Jx=u\times x.
\]

With this convention, left and right multiplication by \(\uhat\) act on an octonion \(a\,1+\hat v\in\mathbb O\) by
\[
\uhat(a\,1+\hat v)
=
-g(u,v)\,1+a\uhat+\widehat{J_uv},
\]
whereas
\[
(a\,1+\hat v)\uhat
=
-g(u,v)\,1+a\uhat-\widehat{J_uv}.
\]

The four copies of \(\six\) arise from the two \(\seven\)-summands, the \(\fourteen\)-summand, and the \(\twentyseven\)-summand in the \(G_2\)-decomposition of \(W\). We use the following \(G_2\)-adapted embeddings of \(u^\perp\).

For \(x\in u^\perp\), define
\begin{equation}\label{eq:six-block-embeddings}
\begin{aligned}
\xi_0(x)
&:=1\otimes x,\\
\xi_7(x)
&:=
\left(
u_i x_q-u_q x_i+\psi_{iqk\ell}u_kx_\ell
\right)\ehat{i}\otimes e_q,\\
\xi_{14}(x)
&:=
\left(
u_i x_q-u_q x_i-\frac12\psi_{iqk\ell}u_kx_\ell
\right)\ehat{i}\otimes e_q,\\
\xi_{27}(x)
&:=
\left(
u_i x_q+u_q x_i
\right)\ehat{i}\otimes e_q.
\end{aligned}
\end{equation}
Thus \(\xi_0(x)\) lies in the real \(\seven\)-summand, \(\xi_7(x)\) lies in the \(\six\)-summand inside the imaginary \(\seven\), \(\xi_{14}(x)\) lies in the \(\six\)-summand inside \(\fourteen\), and \(\xi_{27}(x)\) lies in the \(\six\)-summand inside \(\twentyseven\).

\begin{remark} \label{rem:six-parametrization}
The \(\seven\)-summand in \(\Lambda^2V\) is being parametrized differently here than in Section~\ref{sec:real-spectrum}. In the real-spectrum computation we used the usual parametrization \(v\mapsto v\lrcorner\varphi\). In the present \(\six\)-block, the natural input is \(x\in u^\perp\), so we start from \(u\wedge x\) and take its \(\Lambda^2_7\)-projection. Up to the normalization used above, this is the same as \((u\times x)\lrcorner\varphi\). Similarly, \(\xi_{14}(x)\) is the \(\Lambda^2_{14}\)-projection of \(u\wedge x\). The chosen normalizations are convenient for the block calculation rather than orthonormal.
\end{remark}

A general element of the \(\six^{\oplus4}\)-block may therefore be written as
\[
\what
=
\xi_0(x_0)+\xi_7(x_7)+\xi_{14}(x_{14})+\xi_{27}(x_{27}),
\qquad
x_0,x_7,x_{14},x_{27}\in u^\perp.
\]

Since the stabilizer of \(u\) is \(\SU(3)\), each multiplicity space carries the complex structure induced by \(J\). It is convenient to complexify and identify \(J\) with multiplication by \(i\). Equivalently, every occurrence of \(i x\) below may be read in real terms as \(Jx\).

\begin{lemma}\label{lem:six-block}
With respect to the ordered basis
\[
(\xi_0,\xi_7,\xi_{14},\xi_{27})
\]
of the \(\six^{\oplus4}\)-multiplicity space, the complexified action of
\[
H_{u,Q}=h-Q R_{\uhat}
\]
is represented by the matrix
\begin{equation}\label{eq:six-block-matrix}
M_{\six}(Q)
=
\begin{pmatrix}
-1 & Q+6i & Q & Q\\[2pt]
-\frac{Q}{6}-i & 2-\frac{iQ}{2} & \frac{iQ}{2} & -\frac{iQ}{6}\\[2pt]
-\frac{Q}{3} & iQ & -4 & -\frac{iQ}{3}\\[2pt]
-\frac{Q}{2} & -\frac{iQ}{2} & -\frac{iQ}{2} & \frac{iQ}{2}
\end{pmatrix}.
\end{equation}
Equivalently, if
\[
\what
=
\xi_0(x_0)+\xi_7(x_7)+\xi_{14}(x_{14})+\xi_{27}(x_{27}),
\]
then
\[
H_{u,Q}(\what)
=
\xi_0(x'_0)+\xi_7(x'_7)+\xi_{14}(x'_{14})+\xi_{27}(x'_{27}),
\]
where, in real notation,
\[
\begin{aligned}
x'_0
&=
-x_0+6Jx_7+Qx_7+Qx_{14}+Qx_{27},\\
x'_7
&=
-\frac{Q}{6}x_0-Jx_0
+2x_7-\frac{Q}{2}Jx_7
+\frac{Q}{2}Jx_{14}
-\frac{Q}{6}Jx_{27},\\
x'_{14}
&=
-\frac{Q}{3}x_0
+QJx_7
-4x_{14}
-\frac{Q}{3}Jx_{27},\\
x'_{27}
&=
-\frac{Q}{2}x_0-\frac{Q}{2}Jx_7-\frac{Q}{2}Jx_{14}+\frac{Q}{2}Jx_{27}.
\end{aligned}
\]
\end{lemma}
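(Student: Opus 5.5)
The plan mirrors the proof of Lemma~\ref{lem:one-block}: split \(H_{u,Q}=h-QR_{\uhat}\) and compute the two pieces separately on the four embeddings \(\xi_0,\xi_7,\xi_{14},\xi_{27}\) of \eqref{eq:six-block-embeddings}. Then read off the coefficients of each result in the same basis. By Proposition~\ref{prop:Hua-su3-equivariant} and Schur's lemma, each image is again a combination \(\sum_a \xi_a(c_a x+d_aJx)\). The entries are therefore complex numbers \(c_a+id_a\), with \(J\) acting as \(i\). This is exactly the form of \(M_{\six}(Q)\).

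\textbf{Step 1: the \(h\)-part.} This follows from Lemma~\ref{lem:h-block-computation}, since \(h\) preserves \(G_2\)-types.
\begin{itemize}
\item \(h(\xi_{14}(x))=-4\xi_{14}(x)\).
\item \(h(\xi_{27}(x))=0\).
\item On \(\xi_0,\xi_7\), use the \(\seven\oplus\seven\) matrix. First relate \(\xi_7(x)\) to the parametrization \(w_{ij}=\varphi_{kij}u'_k\). Using the identity \(\psi_{iqk\ell}u_kx_\ell\) versus \(\varphi\varphi\)-contractions, \(\xi_7(x)\) should equal \(\pm 3\,\varphi_{kiq}(u\times x)_k\), i.e.\ a multiple of \((Jx)\lrcorner\varphi\), as noted in Remark~\ref{rem:six-parametrization}.
\item Transporting the matrix \(\begin{pmatrix}-1&6\\1&2\end{pmatrix}\) through this change of normalization produces the entries \(-1\), \(6i\), \(-i\), \(2\) in the \(Q=0\) part of \(M_{\six}\). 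The factor \(J\) comes from the identification \(x\mapsto Jx\).
\end{itemize}
As a check, the \(Q=0\) matrix should have eigenvalues \(\frac{1\pm\sqrt{33}}2,-4,0\), consistent with Theorem~\ref{thm:real-spectrum}.

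\textbf{Step 2: the \(R_{\uhat}\)-part.} Apply the formula
\[
(a\,1+\hat v)\uhat=-g(u,v)\,1+a\uhat-\widehat{Jv}
\]
to the octonion coefficient index of each \(\xi_a(x)\).
\begin{itemize}
\item \(R_{\uhat}\xi_0(x)=\uhat\otimes x=u_ix_q\,\ehat i\otimes e_q\). Decompose \(u_ix_q\) into its \(\Lambda^2_7\), \(\Lambda^2_{14}\) and symmetric parts. The inverse of the system \(\xi_7,\xi_{14},\xi_{27}\) gives \(u\wedge x=\tfrac13\xi_7+\tfrac23\xi_{14}\), with the symmetric part \(\tfrac12\xi_{27}\). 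This should produce the first column \((0,\tfrac16,\tfrac13,\tfrac12)\) times \(-Q\), after accounting for normalization.
\item For \(\xi_7,\xi_{14},\xi_{27}\), split each tensor into the \(i=u\) slot, which gives real parts \(-g(u,\cdot)\), and the \(u^\perp\) slot, on which \(R_{\uhat}\) acts by \(-J\) on the first index.
\item The components with first index along \(u\) become real \(1\otimes(\cdot)\) terms and feed the \(\xi_0\) row.
\item The terms \(\psi_{iqk\ell}u_kx_\ell\) require the identity \(\psi_{iqk\ell}u_k=\)(\(J\)-anti-invariant part). On \(u^\perp\), \(\psi(u,\cdot,\cdot,\cdot)\) is \(\pm\operatorname{Im}\Omega\), the imaginary part of the complex volume form. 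Contracting with \(J\) on one index converts it into \(\operatorname{Re}\Omega\) terms.
\item Re-project the results onto \(u\wedge Jx\) and \(u\odot Jx\) plus possibly \(\psi(u,Jx)\) pieces, and re-express these in the \(\xi\) basis.
\end{itemize}

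\textbf{Main obstacle.} The hardest part is the \(\psi\)-terms in Step 2. One needs \(R_{\uhat}\) applied to \(\psi_{iqk\ell}u_kx_\ell\,\ehat i\otimes e_q\), which is \(-\psi_{mqk\ell}u_kx_\ell\,\varphi_{u m i}\,\ehat i\otimes e_q\). This must be rewritten via the \(\varphi\psi\) contraction identity
\[
\varphi_{ijk}\psi_{abck}=g\varphi\text{-terms},
\]
and the result should be expressible through \(u_ix'_q\pm u_qx'_i\) and \(\psi_{iqk\ell}u_kx'_\ell\) with \(x'=Jx\). Getting the signs consistent with the orientation convention for \(J\) and \(\varphi\) is delicate. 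I would fix an explicit basis with \(u=e_1\) and verify one vector \(x\) numerically, following the paper's Maple approach, to pin down the signs.

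Finally, combine the two pieces as \([h]-Q[R_{\uhat}]\) to get \(M_{\six}(Q)\). As cross-checks:
\begin{itemize}
\item \(R_{\uhat}^2=-\id\) on the block, which follows from alternativity.
\item Self-adjointness must hold with respect to the non-orthonormal Gram weights of the \(\xi_a\).
\end{itemize}
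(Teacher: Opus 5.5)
Your route is the paper's: split $H_{u,Q}=h-QR_{\uhat}$, transport the block data of Lemma~\ref{lem:h-block-computation} through the $\xi$-parametrization, then apply $R_{\uhat}$ to each $\xi_a(x)$ and re-project. The paper only asserts the resulting $R_{\uhat}$-matrix; your first column $(0,\tfrac16,\tfrac13,\tfrac12)$ and your identity $u\wedge x=\tfrac13\xi_7+\tfrac23\xi_{14}$ are correct.

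\textbf{The normalization in Step~1 is wrong.} The contraction identity the paper already uses gives
\[
\varphi_{kiq}(u\times x)_k=\varphi_{iqk}\varphi_{abk}u_ax_b=u_ix_q-u_qx_i+\psi_{iqab}u_ax_b .
\]
Hence $\xi_7(x)=\varphi_{kiq}(Jx)_k\,\ehat{i}\otimes e_q$ with factor exactly $1$, not $\pm3$. The $3$ belongs to $\xi_7(x)=3\pi_7(u\wedge x)$, because $\pi_7(u\wedge x)=\tfrac13(u\times x)\lrcorner\varphi$. The factor changes the answer. If $\xi_7(x)$ corresponded to $u'=cJx$ in the parametrization of Lemma~\ref{lem:h-block-computation}, transporting $\left(\begin{smallmatrix}-1&6\\1&2\end{smallmatrix}\right)$ would give off-diagonal entries $6c\,i$ and $-i/c$. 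For your $c=\pm3$ that is $\pm18i$ and $\mp i/3$, not the claimed $6i$ and $-i$. Your eigenvalue check cannot catch this, because rescaling $\xi_7$ only conjugates the $2\times2$ block and leaves its eigenvalues unchanged.

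\textbf{The self-adjointness cross-check is also wrong.} $H_{u,Q}$ is not self-adjoint for $Q\neq0$, since $R_{\uhat}$ is skew-adjoint: $\langle x\uhat,y\rangle=\langle x,y\bar{\uhat}\rangle=-\langle x,y\uhat\rangle$. That is exactly what lets the $\six^{\oplus4}$-block have no real eigenvalues. The correct test uses the Gram weights $G=\mathrm{diag}(1,6,3,2)$ of $(\xi_0,\xi_7,\xi_{14},\xi_{27})$: $G[h]$ must be Hermitian and $G[R_{\uhat}]$ anti-Hermitian. Both hold for the paper's matrices. Imposing self-adjointness on $[R_{\uhat}]$ to pin down signs would give wrong signs.

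\textbf{You do not need the $\varphi\psi$ identity.} Write $\xi_{14}(x)=\tfrac32\,u\wedge x-\tfrac12\xi_7(x)$. Then you only need $R_{\uhat}$ on $\xi_0$, $u\wedge x$, $\xi_{27}$, and $\xi_7$; the last follows from the $\varphi\varphi$ identity via $\xi_7(x)=(Jx)\lrcorner\varphi$. Equivalently, $R_{\uhat}$ sends the tensor $\psi_{iqk\ell}u_kx_\ell$ to $\psi_{iqk\ell}u_k(Jx)_\ell$. No $\Omega$-decomposition is required, and the four columns of the paper's $[R_{\uhat}]$ come out directly.
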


\begin{proof}
The \(h\)-part is already known from the real spectral calculation in Section~\ref{sec:real-spectrum}. Namely, \(h\) acts on the two \(\seven\)-summands by the same \(2\times2\) block as before, while it acts by \(-4\) on the \(\fourteen\)-summand and by \(0\) on the \(\twentyseven\)-summand.

We now translate the \(\seven\oplus\seven\)-block into the present parametrization. In alignment with the real spectral calculation, the two copies of \(\seven\) are parametrized by
\[
(\xi_0,\xi_7),
\qquad
\xi_0=x_0\in V,
\qquad
\xi_7=J_u(x_7) \in V.
\]
Here the second equality is the convention of Remark~\ref{rem:six-parametrization}: the \(\Lambda^2_7\)-component is parametrized by the vector \(J_u(x_7)\), rather than directly by \(x_7\).

By Lemma~\ref{lem:h-block-computation}, \(h\) acts on the pair \((\xi_0,\xi_7)\) by
\[
\begin{pmatrix}
-1 & 6\\
1 & 2
\end{pmatrix}.
\]
Therefore
\[
(\xi_0,\xi_7)
\longmapsto
\left(
-\xi_0+6J_u x_7,\;
\xi_0\lrcorner\varphi+2(J_u x_7)\lrcorner\varphi
\right).
\]
To rewrite the second component in the original \(x_7\)-coordinate, note that
\[
\xi_0\lrcorner\varphi+2J_u x_7\lrcorner\varphi
=
(J_u(-J_u\xi_0+2x_7))\lrcorner\varphi.
\]
Since \(\xi_0=x_0\), the induced action on the multiplicity variables \((x_0,x_7)\) is
\[
(x_0,x_7)
\longmapsto
\left(
-x_0+6J_u x_7,\;
-J_u x_0+2x_7
\right).
\]

The remaining \(G_2\)-components are immediate:
\[
x_{14}\longmapsto -4x_{14},
\qquad
x_{27}\longmapsto 0.
\]
After complexifying \(u^\perp\) and replacing \(J_u\) by multiplication by \(i\), we obtain the \(Q=0\) part of the above matrix:
\begin{align} \label{eq:h6matrix}
    \begin{pmatrix}
-1 & 6i & 0 & 0\\
-i & 2 & 0 & 0\\
0 & 0 & -4 & 0\\
0 & 0 & 0 & 0
\end{pmatrix}.
\end{align}

It remains to compute the contribution of \(-Q R_{\uhat}\). Applying \(R_{\uhat}\) to the four embeddings and projecting back to the ordered basis
\[
(\xi_0,\xi_7,\xi_{14},\xi_{27}),
\]
one obtains the following transformation on the multiplicity variables:
\begin{equation}\label{eq:six-block-Ru-variables}
\begin{pmatrix}
x_0\\ x_7\\ x_{14}\\ x_{27}
\end{pmatrix}
\longmapsto
\begin{pmatrix}
-x_7-x_{14}-x_{27}\\[2pt]
\frac16x_0+\frac12Jx_7-\frac12Jx_{14}+\frac16Jx_{27}\\[2pt]
\frac13x_0-Jx_7+\frac13Jx_{27}\\[2pt]
\frac12x_0 +\frac12Jx_7 +\frac12Jx_{14} - \frac12Jx_{27} 
\end{pmatrix}.
\end{equation}
Equivalently, after complexifying \(u^\perp\) and replacing \(J\) by multiplication by \(i\), the \(R_{\uhat}\)-matrix is
\begin{equation}\label{eq:six-block-Ru-matrix}
[R_{\uhat}]_{\six^{\oplus4}}
=
\begin{pmatrix}
0 & -1 & -1 & -1\\[2pt]
\frac16 & \frac{i}{2} & -\frac{i}{2} & \frac{i}{6}\\[2pt]
\frac13 & -i & 0 & \frac{i}{3}\\[2pt]
\frac12 & \frac{i}{2} & \frac{i}{2} & -\frac{i}{2}
\end{pmatrix}.
\end{equation}
Since \(H_{u,Q}=h-Q R_{\uhat}\), and the \(h\)-part in this parametrization is given by \eqref{eq:h6matrix}, we obtain the displayed matrix \(M_{\six}(Q)\).
\end{proof}

\begin{corollary}\label{cor:six-block-no-nonreal}
The \(\six^{\oplus4}\)-block contributes no octonionic right eigenvalues of \(h\) with nonzero imaginary part in the slice
\[
\R\oplus \R\uhat .
\]
Equivalently, for real \(Q\neq 0\), the operator \(H_{u,Q}\) has no real eigenvalues on the \(\six^{\oplus4}\)-block.
\end{corollary}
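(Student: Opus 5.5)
The plan is to reduce the statement to a determinant computation on the complexified multiplicity space. By Proposition~\ref{prop:Hua-su3-equivariant}, \(H_{u,Q}\) commutes with the \(\SU(3)\)-action. In the parametrization of Lemma~\ref{lem:six-block} it acts on \(\six^{\oplus4}\cong (u^\perp)^{\oplus 4}\) by a \(4\times 4\) matrix whose entries are real polynomials in \(Q\) and \(J\). Hence the restriction of \(H_{u,Q}\) is complex-linear for the complex structure \(J\) on each copy of \(u^\perp\cong\C^3\).

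First I would make the following identification precise. Identify \(\six^{\oplus4}\) with \(\C^4\otimes_{\C}\C^3\), with \(H_{u,Q}\) acting as \(M_{\six}(Q)\otimes\id\). A real number \(\mu\) is an eigenvalue of the real-linear operator \(H_{u,Q}|_{\six^{\oplus4}}\) exactly when \(\mu\) is a complex eigenvalue of \(M_{\six}(Q)\). Indeed, a real eigenvector is automatically an eigenvector of the complex-linear map with the same (real) eigenvalue, and conversely. So the statement is equivalent to the claim that
\[
p_{\six}(\mu,Q):=\det\bigl(M_{\six}(Q)-\mu I\bigr)\in\C[\mu,Q]
\]
has no zero with \(\mu\in\R\) and \(Q\in\R\setminus\{0\}\).

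Next I would expand \(p_{\six}\) using the matrix from Lemma~\ref{lem:six-block}, for instance with Maple, and split it as \(p_{\six}=P(\mu,Q)+i\,S(\mu,Q)\) with \(P,S\) real polynomials. As a consistency check, at \(Q=0\) the matrix is block diagonal by \eqref{eq:h6matrix}. Its upper block has determinant \((-1-\mu)(2-\mu)-6=\mu^2-\mu-8\), so \(p_{\six}(\mu,0)=\mu(\mu+4)(\mu^2-\mu-8)\), reproducing the \(\seven,\seven,\fourteen,\twentyseven\) eigenvalues of Theorem~\ref{thm:real-spectrum}. Every \(Q\)-dependent entry that multiplies \(i\) carries a factor \(Q\), so I expect \(S(\mu,Q)=Q\,s(\mu,Q)\) for a real polynomial \(s\). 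For \(Q\neq0\), a real eigenvalue then requires the simultaneous vanishing of \(P(\mu,Q)=0\) and \(s(\mu,Q)=0\).

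The main obstacle is showing this real system has no solutions with \(Q\neq0\). My first attempt would be to use \(s=0\) to express \(\mu\) in terms of \(Q\), if \(s\) is low degree in \(\mu\) (I expect degree at most \(2\) or \(3\)), and substitute into \(P\). Failing that, I would compute the resultant \(\mathrm{Res}_\mu(P,s)\in\R[Q]\). I would then show that it equals \(Q^{2k}\) times a polynomial with no real roots, for example a polynomial in \(Q^2\) with positive coefficients. Alternatively, I would try to exhibit \(P\) and \(s\) combining to a sum of squares that is strictly positive when \(Q\neq0\). If spurious real roots of the resultant appear, I would check directly that the corresponding \(\mu\) values fail \(P=0\) or \(s=0\). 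Combined with Proposition~\ref{prop:right-eigenvalue-reduction}, this shows the \(\six^{\oplus4}\)-block yields no right eigenvalues \(\mu+Q\uhat\) with \(Q\neq0\).
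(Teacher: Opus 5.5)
Your proposal follows the same route as the paper. Both compute $p_{\six}(\mu,Q)=\det(M_{\six}(Q)-\mu I)$, split it into real and imaginary parts, cancel the factor $Q\neq0$ from the imaginary part $Q(3Q^2+3\mu^2+16\mu+16)$, and eliminate. The only difference is that the paper solves the imaginary equation for $Q^2=-\tfrac13(\mu+4)(3\mu+4)$, using that it is linear in $Q^2$, rather than for $\mu$. This collapses the real part to $-\tfrac{16}{9}(\mu+4)(2\mu-1)$, so the only candidates are $\mu=-4$, which forces $Q=0$, and $\mu=\tfrac12$, which forces $Q^2<0$. Your resultant fallback also works and gives $\mathrm{Res}_\mu=256\,Q^2(4Q^2+33)$. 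Your identification of $\six^{\oplus4}$ with $\C^4\otimes_{\C}\C^3$ via $J$ is a slightly cleaner justification than the paper's complexification remark for why vanishing of the complex determinant is the right criterion.
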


\begin{proof}
By Lemma~\ref{lem:six-block}, after complexifying the multiplicity space and replacing \(J_u\) by multiplication by \(i\), the \(\six^{\oplus4}\)-block of \(H_{u,Q}\) is represented by \(M_{\six}(Q)\). Its characteristic polynomial is
\begin{equation}\label{eq:six-block-characteristic-polynomial}
\begin{aligned}
p_{\six}(\mu,Q)
&=
\mu^4+3\mu^3+(2Q^2-12)\mu^2+(3Q^2-32)\mu
+Q^4+4Q^2  \\
&\qquad
+iQ\left(3Q^2+3\mu^2+16\mu+16\right).
\end{aligned}
\end{equation}
A real eigenvalue of \(H_{u,Q}\) corresponds to a real number \(\mu\) satisfying
\[
p_{\six}(\mu,Q)=0.
\]
Thus both the real and imaginary parts of \(p_{\six}(\mu,Q)\) must vanish. Strictly speaking, \(p_{\six}\) is the determinant of the complex
multiplicity-space block corresponding to the \(J=i\) eigenspace after
complexification. The conjugate \(J=-i\) block has conjugate determinant.
For real \(\mu,Q\), singularity of the real \(6^{\oplus4}\)-block is therefore
equivalent to the vanishing of \(p_{\six}\), or equivalently to simultaneous
vanishing of its real and imaginary parts.

Since \(Q\neq0\), vanishing of the imaginary part gives

\[
3Q^2+3\mu^2+16\mu+16=0.
\]
Hence, solving for \(Q^2\), we get:
\begin{equation}\label{eq:six-Q2}
Q^2=-\frac13(\mu+4)(3\mu+4)
\end{equation}

Substituting \eqref{eq:six-Q2} into the real part of \eqref{eq:six-block-characteristic-polynomial}
\[
\operatorname{Re} p_6(\mu,Q)
=
-\frac{16}{9}(\mu+4)(2\mu-1).
\]
However, from \eqref{eq:six-Q2}, \(\mu=-4\) gives \(Q^2=0\), while, \(\mu=\frac12\) gives \(Q^2<0\).

Thus \(H_{u,Q}\) has no real eigenvalues on the \(\six^{\oplus4}\)-block for \(Q\neq0\). By Proposition~\ref{prop:right-eigenvalue-reduction}, this means that the \(\six^{\oplus4}\)-block contributes no octonionic right eigenvalues
\[
\lambda=\mu+Q\uhat
\]
of \(h\) with \(Q\neq0\).
\end{proof}

\subsection{The \(\eight^{\oplus 2}\)-block}

We next analyze the \(\eight^{\oplus 2}\)-isotypic component. Recall from Lemma~\ref{lem:8-to-8} that the \(\eight\)-summand inside
\[
\twentyseven\cong S^2_0V
\]
is naturally identified with the \(\eight\)-summand inside
\[
\fourteen\cong \Lambda^2_{14}V
\]
by the map
\[
\alpha\longmapsto \omega_{\alpha},
\qquad
\omega_{\alpha}(v,w)=\alpha (J_uv,w).
\]
Here \(\alpha\) is a traceless symmetric tensor on \(u^\perp\), extended by zero in the \(u\)-direction, and \(\omega_{\alpha}\) is regarded as a skew tensor in \(\Lambda^2_{14}V\).

Thus a general element of the \(\eight^{\oplus 2}\)-block may be written as
\[
\what=\alpha+\omega_{\beta},
\]
where
\[
\alpha,\beta \in \eight\subset S^2_0(u^\perp).
\]
Here \(\alpha\) parametrizes the \(\eight\)-summand inside \(\twentyseven\), while \(\beta\) parametrizes the \(\eight\)-summand inside \(\fourteen\).

\begin{lemma}\label{lem:eight-block}
With respect to the ordered basis
\[
(\alpha,\beta)
\]
of the \(\eight^{\oplus2}\)-multiplicity space, the operator
\[
H_{u,Q}=h-Q R_{\uhat}
\]
is represented by
\begin{equation}\label{eq:eight-block-matrix}
M_{\eight}(Q)
=
\begin{pmatrix}
0 & Q\\
-Q & -4
\end{pmatrix}.
\end{equation}
Equivalently,
\[
H_{u,Q}(\alpha+\omega_{\beta})
=
(Q \beta)-(Q\omega_{\alpha}+4\omega_{\beta}).
\]
\end{lemma}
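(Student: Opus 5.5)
The plan is to compute the two pieces $h$ and $R_{\uhat}$ separately, as in the proofs of Lemma~\ref{lem:one-block} and Lemma~\ref{lem:six-block}, and then combine them as $M_{\eight}(Q)=[h]_{\eight^{\oplus2}}-Q[R_{\uhat}]_{\eight^{\oplus2}}$.

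\textbf{The $h$-part.} This follows directly from Lemma~\ref{lem:h-block-computation}. Since $\alpha\in\eight\subset\twentyseven$, we get $h(\alpha)=0$. Since $\omega_\beta\in\eight\subset\fourteen$ by Lemma~\ref{lem:8-to-8}, we get $h(\omega_\beta)=-4\omega_\beta$. Hence $[h]_{\eight^{\oplus2}}=\operatorname{diag}(0,-4)$, which accounts for the entries $0$ and $-4$ in \eqref{eq:eight-block-matrix}.

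\textbf{The $R_{\uhat}$-part.} I will work in components. Write $\what_q=\alpha_{iq}\ehat{i}$ with $u^i\alpha_{iq}=0$, and use the rule already stated in the $\six$-block section: for $\hat x\perp\uhat$ we have $\hat x\uhat=-\widehat{Jx}$, with no real part. Then
\[
R_{\uhat}(\alpha)=-(J\alpha)_{iq}\,\ehat{i}\otimes e_q,
\]
meaning $J$ is applied to the first (octonionic) index. In index form this tensor is $-\alpha(J^{T}\cdot,\cdot)=\alpha(J\cdot,\cdot)$ up to the transpose convention, that is, $\pm\omega_\alpha$ by \eqref{eq:omega-s-definition}. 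It is skew and lies in $\eight\subset\fourteen$. For the matching term in the formula, we need $R_{\uhat}\alpha=-\omega_\alpha$ in the chosen sign conventions, giving the entry $-Q$ after multiplying by $-Q$ and the term $-Q\omega_\alpha$.

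Similarly, $R_{\uhat}\omega_\beta$ applies $-J$ to the first index of $\omega_\beta=\beta(J\cdot,\cdot)$. Using $J^2=-\id$, this returns $\pm\beta$, which is symmetric and $J$-invariant, so it lies in $\eight\subset\twentyseven$. Consistency with $R_{\uhat}^2=-\id$ (alternativity) forces the sign to be opposite to the previous one, so $R_{\uhat}\omega_\beta=\beta$ when $R_{\uhat}\alpha=-\omega_\alpha$. This yields $[R_{\uhat}]=\begin{pmatrix}0&1\\-1&0\end{pmatrix}$ up to an overall sign fixed by the first computation. Therefore $-Q[R_{\uhat}]$ gives the off-diagonal entries $Q$ and $-Q$.

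\textbf{Main obstacle.} The only delicate point is sign bookkeeping: which index of $\alpha_{iq}$ carries the octonion slot, together with the transpose of $J$ (note $J^{T}=-J$ for the orthogonal complex structure $J$). I would fix this by testing on one explicit example. Take $u=\ehat{1}$ in a standard basis and a simple $J$-invariant traceless $\alpha$, such as $\alpha=\ehat{2}\odot\ehat{2}+\ehat{3}\odot\ehat{3}-\ehat{4}\odot\ehat{4}-\ehat{5}\odot\ehat{5}$ with $J\ehat{2}=\ehat{3}$, $J\ehat{4}=\ehat{5}$. Then check $R_{\uhat}\alpha=-\omega_\alpha$ directly.

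Two points require no computation. First, there is no leakage into the real part or the $u$-direction, since $u\lrcorner\alpha=0$. Second, there is no $\eight$-component outside the span of $\alpha,\omega_\alpha$, because of $\SU(3)$-equivariance together with Schur's lemma, which holds since $\eight$ is of real type.
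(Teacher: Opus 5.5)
Your route is the paper's own. You take $[h]=\operatorname{diag}(0,-4)$ from Lemma~\ref{lem:h-block-computation} and compute $R_{\uhat}$ by letting $-J$ act on the octonionic index. Deducing the second sign from the first via $R_{\uhat}^2=-\id$ is a legitimate substitute for the paper's direct computation of $R_{\uhat}\omega_\beta$. The problem is the sign itself, which you rightly call the only delicate point and which is the whole content of the off-diagonal entries. As written, your argument proves the transposed matrix. You commit to $R_{\uhat}\alpha=-\omega_\alpha$, hence $R_{\uhat}\omega_\beta=\beta$, i.e.\ $[R_{\uhat}]=\begin{pmatrix}0&1\\-1&0\end{pmatrix}$. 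Then
\[
[h]-Q[R_{\uhat}]=\begin{pmatrix}0&-Q\\ Q&-4\end{pmatrix}=M_{\eight}(-Q)=M_{\eight}(Q)^{T},
\]
which is not $M_{\eight}(Q)$. Your sentence claiming this yields ``the term $-Q\omega_\alpha$'' is an arithmetic slip, since $-Q\cdot(-\omega_\alpha)=+Q\omega_\alpha$. The stated matrix requires $R_{\uhat}\alpha=+\omega_\alpha$ and $R_{\uhat}\omega_\beta=-\beta$.

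These are in fact the correct values, and the convention is not yours to choose. The paper writes elements of $W$ as $w_{ij}\ehat{i}\otimes e_j$ with the octonionic index first, and a bilinear form $\omega$ corresponds to $w_{ij}=\omega(e_i,e_j)$. With that convention your own component formula already removes the ``$\pm$''. The components $-J_{\ell i}\alpha_{iq}$ define the form $(v,w)\mapsto-\alpha(J^{T}v,w)=\alpha(Jv,w)=\omega_\alpha(v,w)$, so $R_{\uhat}\alpha=\omega_\alpha$. Then $R_{\uhat}\omega_\beta(v,w)=\omega_\beta(Jv,w)=\beta(J^2v,w)=-\beta(v,w)$.

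Your proposed test confirms this rather than your stated sign. Take $u=e_1$ and $\varphi_{123}=1$, so $Je_2=e_3$. Then $\ehat{2}\uhat=-\ehat{3}$ and $\ehat{3}\uhat=\ehat{2}$. So the $\{e_2,e_3\}$-part $\ehat{2}\otimes e_2+\ehat{3}\otimes e_3$ of $\alpha$ is sent to $\ehat{2}\otimes e_3-\ehat{3}\otimes e_2$. This has the same sign pattern as the corresponding part of $\omega_\alpha$, because $\omega_\alpha(e_2,e_3)=\alpha(e_3,e_3)>0$ and $\omega_\alpha(e_3,e_2)=-\alpha(e_2,e_2)<0$.

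The slip is harmless for Corollary~\ref{cor:eight-block-quadratic}, since the determinant $\mu^2+4\mu+Q^2$ depends only on $Q^2$. The lemma, however, asserts a specific matrix, so the sign has to be computed in the paper's fixed conventions, not chosen to match the target.
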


\begin{proof}
The \(h\)-part is known from the real spectral calculation. Since
\[
\alpha\in \twentyseven,
\qquad
\omega_{\beta}\in \fourteen,
\]
we have
\[
h(\alpha)=0,
\qquad
h(\omega_{\beta})=-4\omega_{\beta}.
\]
Thus
\[
h(\alpha+\omega_{\beta})=-4\omega_{\beta}.
\]

It remains to compute the effect of \(R_{\uhat}\). We claim that
\[
R_{\uhat}(\alpha)=\omega_{\alpha},
\qquad
R_{\uhat}(\omega_{\beta})=-\beta.
\]
To see this, write \(\alpha=\alpha_{iq}\,\ehat{i}\otimes e_q\), with
\[
\alpha_{iq}=\alpha_{qi},
\qquad
u_i \alpha_{iq}=0,
\qquad
\alpha_{pp}=0,
\]
and with \(\alpha\) being \(J_u\)-invariant. Right multiplication by \(\uhat\) gives
\[
R_{\uhat}(\alpha)
=
\alpha_{iq}(\ehat{i}\uhat)\otimes e_q
=\alpha_{iq}(-u_i + \varphi_{ik\ell} u_k \ehat{\ell})\otimes e_q.
\]
Since \(\alpha\) has no \(u\)-component, the scalar part vanishes, and the imaginary part becomes:
\[
(\alpha_{iq}\varphi_{ik\ell} u_k) \ehat{\ell}\otimes e_q
=\alpha_{iq}(J_u)_{i\ell}\ehat{\ell}\otimes e_q=\omega_{\alpha},
\]
where we have used the convention \(J_u (v)=(J_u)_{ij}v_j e_i=(u\times v)_i e_i\).

Under the identification
\[
\omega_\alpha(v,w)=\alpha(J_uv,w),
\]
the action of \(R_{\uhat}\) sends \(\alpha\) to \(\omega_\alpha\). Applying the same operation to
\(\omega_\beta(v,w)=\beta(J_uv,w)\) applies \(J_u\) once more to the first argument, and hence gives
\[
R_{\uhat}(\omega_\beta)(v,w)=\beta(J_u^2v,w)=-\beta(v,w).
\]
Thus
\[
R_{\uhat}(\alpha)=\omega_\alpha,\qquad
R_{\uhat}(\omega_\beta)=-\beta.
\]

Therefore
\[
H_{u,Q}(\alpha+\omega_{\beta})
=
h(\alpha+\omega_{\beta})-Q R_{\uhat}(\alpha+\omega_{\beta})
=
-4\omega_{\beta}-Q(\omega_{\alpha}-\beta).
\]
Hence
\[
H_{u,Q}(\alpha+\omega_{\beta})
=
(Q\beta)-(Q\omega_{\alpha}+4\omega_{\beta}),
\]
which gives the matrix
\[
M_{\eight}(Q)
=
\begin{pmatrix}
0 & Q\\
-Q & -4
\end{pmatrix}.
\]
\end{proof}

\begin{corollary}\label{cor:eight-block-quadratic}
The \(\eight^{\oplus2}\)-block contributes octonionic right eigenvalues
\[
\lambda=\mu+Q\uhat
\]
of \(h\) precisely when \(Q, \mu\in\R\) satisfy
\[
Q^2+\mu^2+4\mu=0.
\]
Equivalently,
\[
(\mu+2)^2+Q^2=4.
\]
\end{corollary}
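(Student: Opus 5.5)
The plan mirrors the proofs of Corollaries~\ref{cor:one-block-quartic} and~\ref{cor:six-block-no-nonreal}. By Lemma~\ref{lem:eight-block}, $H_{u,Q}$ preserves the $\eight^{\oplus2}$-isotypic component. With respect to the multiplicity variables $(\alpha,\beta)\in\eight\oplus\eight$, it acts by the real matrix $M_{\eight}(Q)$ tensored with the identity on $\eight$. I would first record that the map $\beta\mapsto\omega_\beta$ of Lemma~\ref{lem:8-to-8} is an $\SU(3)$-equivariant linear isomorphism onto the $\eight$-summand of $\fourteen$, injective since $J_u$ is invertible. So the decomposition $\what=\alpha+\omega_\beta$ is a genuine direct-sum parametrization. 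It follows that $H_{u,Q}-\mu\,\id$ restricted to this block is singular exactly when the $2\times2$ matrix $M_{\eight}(Q)-\mu I$ is singular. Concretely, a nonzero kernel vector $(a,b)\in\R^2$ of $M_{\eight}(Q)-\mu I$ produces the kernel vectors $\alpha=a s$, $\beta=b s$ for any nonzero $s\in\eight$. Conversely, a kernel element $(\alpha,\beta)\neq0$ gives, after pairing with a fixed $s$, a nonzero kernel vector of the $2\times 2$ matrix. In particular, each such point of the locus contributes an eigenspace of real dimension at least $8$.

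Next I would compute the determinant:
\[
\det\begin{pmatrix}-\mu & Q\\ -Q & -4-\mu\end{pmatrix}=\mu(\mu+4)+Q^2=\mu^2+4\mu+Q^2,
\]
and complete the square to get $(\mu+2)^2+Q^2=4$. This equation has real solutions for every $\mu\in[-4,0]$, so the locus is a genuine circle in the $(\mu,Q)$-plane rather than an empty set. Its points with $Q=0$ are $\mu=0$ and $\mu=-4$, which agree with the real eigenvalues coming from $\twentyseven$ and $\fourteen$ in Theorem~\ref{thm:real-spectrum}; I would include this as a consistency check. Finally, Proposition~\ref{prop:right-eigenvalue-reduction} turns real eigenvalues $\mu$ of $H_{u,Q}$ on this block into right eigenvalues $\lambda=\mu+Q\uhat$ of $h$, which gives the ``precisely when'' statement.

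The only delicate point is the legitimacy of reducing to the $2\times2$ multiplicity matrix. This needs that $R_{\uhat}$ maps the $\eight$ in $\twentyseven$ onto the $\eight$ in $\fourteen$ compatibly with the identification $\alpha\mapsto\omega_\alpha$, with no leakage into other isotypic components. That property is already established in Lemma~\ref{lem:eight-block}, and more abstractly it is forced by $\SU(3)$-equivariance (Proposition~\ref{prop:Hua-su3-equivariant}). Unlike the $\six^{\oplus4}$-block, no complexification is needed here, because $\eight$ is of real type and $M_{\eight}(Q)$ is real. So I expect no real obstacle beyond stating this identification carefully.
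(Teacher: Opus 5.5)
Your proposal is correct and follows the paper's proof: you read off $M_{\eight}(Q)$ from Lemma~\ref{lem:eight-block}, compute $\det(M_{\eight}(Q)-\mu I)=\mu^2+4\mu+Q^2$, and apply Proposition~\ref{prop:right-eigenvalue-reduction}. Your extra step shows that singularity of the real block $M_{\eight}(Q)\otimes\id_{\eight}$ is equivalent to singularity of the $2\times 2$ matrix, using that $\beta\mapsto\omega_\beta$ is an isomorphism and that $\eight$ is of real type; this fills in a point the paper leaves implicit, and the $Q=0$ consistency check is a harmless addition.
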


\begin{proof}
By Lemma~\ref{lem:eight-block}, the restriction of \(H_{u,Q}\) to the \(\eight^{\oplus2}\)-block is represented by
\[
M_{\eight}(Q)=
\begin{pmatrix}
0 & Q\\
-Q & -4
\end{pmatrix}.
\]
Thus \(\mu\) is an ordinary real eigenvalue of \(H_{u,Q}\) on this block precisely when
\[
\det(M_{\eight}(Q)-\mu I)=0.
\]
We compute
\[
\det
\begin{pmatrix}
-\mu & Q\\
-Q & -4-\mu
\end{pmatrix}
=
\mu(\mu+4)+Q^2
=
\mu^2+4\mu+Q^2.
\]
Therefore the block contributes octonionic right eigenvalues
\[
\lambda=\mu+Q\uhat
\]
of \(h\) precisely when
\[
Q^2+\mu^2+4\mu=0,
\]
by Proposition~\ref{prop:right-eigenvalue-reduction}.
\end{proof}

\section{The octonionic right spectrum}\label{sec:main-right-spectrum}

We now combine the block calculations from the previous section into a single description of the octonionic right-eigenvalue problem. Recall that for a fixed unit vector \(u\in V\), with corresponding \(\uhat\in \Vhat\), an octonionic right eigenvalue in the slice
\[
\C_{\uhat}=\R\oplus \R\uhat
\]
has the form
\[
\lambda=\mu+Q\uhat,
\qquad
\mu,Q\in \R,
\]
and corresponds to a real eigenvalue \(\mu\) of the operator
\[
H_{u,Q}=h-QR_{\uhat}.
\]

\begin{theorem}\label{thm:right-spectrum-main}
Fix a unit vector \(u\in V\) and consider the slice \(\C_{\uhat}=\R\oplus \R\uhat\). Then the octonionic right eigenvalues
\[
\lambda=\mu+Q\uhat
\]
of \(h\) are precisely those for which \((\mu,Q)\in \R^2\) lies on one of the following two loci:
\begin{enumerate}
    \item the quartic curve
    \begin{equation}\label{eq:right-spectrum-quartic}
    \mu^4
    +6\mu^3
    +(2Q^2-15)\mu^2
    +(6Q^2-56)\mu
    +Q^4+Q^2
    =0,
    \end{equation}
    arising from the \(\one^{\oplus4}\)-block;
    
    \item the circle
    \begin{equation}\label{eq:right-spectrum-circle}
    (\mu+2)^2+Q^2=4,
    \end{equation}

    arising from the \(\eight^{\oplus2}\)-block.
\end{enumerate}
The \(\six^{\oplus4}\)- and \(\twelve\)-blocks contribute no right eigenvalues with \(Q\neq 0\).

Equivalently, the non-real octonionic right eigenvalues of \(h\) in the slice \(\C_{\uhat}\) are exactly those
\[
\lambda=\mu+Q\uhat
\]
with \(Q\neq 0\) for which \((\mu,Q)\) lies on the quartic \eqref{eq:right-spectrum-quartic} or the circle \eqref{eq:right-spectrum-circle}.
\end{theorem}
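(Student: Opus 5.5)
The plan is to assemble the theorem directly from the block results of Section~\ref{sec:block-analysis}. By Proposition~\ref{prop:right-eigenvalue-reduction}, \(\lambda=\mu+Q\uhat\) is a right eigenvalue of \(h\) if and only if \(\ker(H_{u,Q}-\mu\,\id)\neq0\). By Proposition~\ref{prop:Hua-su3-equivariant}, \(H_{u,Q}\) is \(\SU(3)\)-equivariant, so it preserves each isotypic component in the decomposition of Proposition~\ref{prop:W-su3-decomposition}. This decomposition is orthogonal, since distinct isotypic components of an orthogonal representation are orthogonal. Hence \(H_{u,Q}-\mu\,\id\) is block diagonal, and its kernel is nonzero exactly when the kernel of one of its four blocks is nonzero. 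The first step, therefore, is to record that
\[
\det\bigl(H_{u,Q}-\mu\,\id\bigr)=\prod_{\text{blocks}}\det\bigl((H_{u,Q}-\mu\,\id)|_{\text{block}}\bigr).
\]

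The second step is to reduce each block determinant to a determinant on its multiplicity space. For an isotypic component \(\mathbf{r}^{\oplus m}\cong\R^m\otimes\mathbf r\), an equivariant map has the form \(M\otimes\id_{\mathbf r}\) when \(\mathbf r\) is of real type, namely \(\one\) and \(\eight\). Its determinant is then \(\det(M)^{\dim\mathbf r}\). In this form I invoke Lemma~\ref{lem:one-block} with Corollary~\ref{cor:one-block-quartic}, and Lemma~\ref{lem:eight-block} with Corollary~\ref{cor:eight-block-quadratic}. These give the quartic \(p_{\one}(\mu,Q)=0\) and the circle \(\mu^2+4\mu+Q^2=0\). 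The second equation is just \((\mu+2)^2+Q^2=4\) after completing the square.

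For the complex-type modules \(\six\) and \(\twelve\), the commutant is \(\C\), acting via \(J\) or \(R_{\uhat}\). In these cases the real determinant equals \(|\det_\C M|^2\) raised to a power, which is how Corollary~\ref{cor:six-block-no-nonreal} and Lemma~\ref{lem:twelve-block} are phrased. Those results show that no real \(\mu\) exists when \(Q\neq0\).

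The third step is the bookkeeping at \(Q=0\). There the statement as written claims the right eigenvalues are exactly the points of the two loci. At \(Q=0\) the circle gives \(\mu\in\{0,-4\}\) and the quartic gives \(\mu\in\{0,-7,\tfrac{1\pm\sqrt{33}}2\}\), by the remark following Corollary~\ref{cor:one-block-quartic}. Their union is \(\Spec_\R(h)\) from Theorem~\ref{thm:real-spectrum}, so the real eigenvalues contributed by the \(\six^{\oplus4}\)- and \(\twelve\)-blocks (namely \(0,-4,\tfrac{1\pm\sqrt{33}}2\)) are already on the loci. I will check this explicitly by evaluating \(p_{\six}(\mu,0)=\mu(\mu+4)(\mu^2-\mu-8)\), which should hold with \(\mu^4+3\mu^3-12\mu^2-32\mu\). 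This makes the statement correct including \(Q=0\). The ``equivalently'' clause then follows by intersecting with \(Q\neq0\).

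The main obstacle is not computational, since the block matrices are given earlier. It is justifying rigorously that singularity of each real isotypic block is equivalent to the vanishing of the stated multiplicity-space determinant, especially for \(\six^{\oplus4}\). There the matrix \(M_{\six}(Q)\) was written after complexifying with \(J=i\), and one must confirm that \(R_{\uhat}\) and \(h\) both commute with the complex structure on the multiplicity space. I would handle this by noting that \(\End_{\SU(3)}(\six)\cong\C\) is generated by \(J\), so every equivariant map on \(\six^{\oplus4}\) is a \(4\times4\) complex matrix acting on \(\C^4\otimes_\C\three\). Its real determinant is then \(|\det M_{\six}|^{6}\), which vanishes iff \(p_{\six}=0\).
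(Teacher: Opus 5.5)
Your proposal is correct and follows the same route as the paper. You reduce via Proposition~\ref{prop:right-eigenvalue-reduction} to real eigenvalues of \(H_{u,Q}\), split along the \(\SU(3)\)-isotypic blocks, and quote the four block results; your Schur-lemma justification of the multiplicity-space determinants (giving \(|\det M_{\six}|^6\) for the complex-type \(\six^{\oplus4}\)-block) and your explicit \(Q=0\) check via \(p_{\six}(\mu,0)=\mu(\mu+4)(\mu^2-\mu-8)\) simply make rigorous what the paper asserts, notably its closing claim that the \(\six\)- and \(\twelve\)-blocks contribute only real eigenvalues already lying on the loci.
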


\begin{proof}
By Proposition~\ref{prop:right-eigenvalue-reduction}, right eigenvalues
\[
\lambda=\mu+Q\uhat
\]
of \(h\) correspond to real eigenvalues \(\mu\) of \(H_{u,Q}\). By the \(\SU(3)\)-decomposition
\[
W\cong \one^{\oplus4}\oplus \six^{\oplus4}\oplus \eight^{\oplus2}\oplus \twelve,
\]
the operator \(H_{u,Q}\) splits into four independent blocks.

The \(\one^{\oplus4}\)-block contributes exactly the pairs \((\mu,Q)\) satisfying the quartic equation \eqref{eq:right-spectrum-quartic}, by Corollary~\ref{cor:one-block-quartic}. The \(\eight^{\oplus2}\)-block contributes exactly the pairs \((\mu,Q)\) satisfying the quadratic equation \eqref{eq:right-spectrum-circle}, by Corollary~\ref{cor:eight-block-quadratic}. By Corollary~\ref{cor:six-block-no-nonreal}, the \(\six^{\oplus4}\)-block contributes no real eigenvalues of \(H_{u,Q}\) for \(Q\neq0\), and hence no non-real right eigenvalues of \(h\). Finally, by Lemma~\ref{lem:twelve-block}, the \(\twelve\)-block also contributes no real eigenvalues of \(H_{u,Q}\) for \(Q\neq0\).

Thus the full right spectrum in the slice \(\C_{\uhat}\) is exactly the union of the quartic locus \eqref{eq:right-spectrum-quartic} and the circle \eqref{eq:right-spectrum-circle}.  Restricting to \(Q\neq0\) yields the non-real right spectrum. The \(\six\) and \(\twelve\) blocks only contribute real eigenvalues that are already included in the quartic and circle loci.
\end{proof}

\begin{figure}[ht]
\centering
\begin{tikzpicture}
\begin{axis}[
    width=1.2\textwidth,
    height=0.8\textwidth,
    axis equal image,
    xmin=-7.6, xmax=3.8,
    ymin=-3.3, ymax=3.3,
    axis lines=middle,
    xlabel={\(\mu\)},
    ylabel={\(Q\)},
    xlabel style={anchor=west},
    ylabel style={anchor=south},
    grid=both,
    grid style={gray!15},
    major grid style={gray!25},
    tick style={black},
    legend style={
        draw=none,
        fill=white,
        fill opacity=0.85,
        text opacity=1,
        at={(0.03,0.97)},
        anchor=north west
    },
    samples=250,
    domain=-7:3.3723,
    xtick={-7,-4,-2.4,-2,0,3.3},
    xticklabels={
        \(-7\),
        \(-4\),
        \(\mu_-\),
        \(-2\),
        \(0\),
        \(\mu_+\)
    },
    ytick={-3,-2,-1,0,1,2,3},
    legend style={
    draw=none,
    fill=white,
    fill opacity=0.85,
    text opacity=1,
    at={(0.03,0.97)},
    anchor=north west
    },
    legend image post style={line width=1.2pt},
]

% Quartic locus:
% Let Y = Q^2. Then
% Y^2 + (2 mu^2 + 6 mu + 1)Y
%      + mu^4 + 6 mu^3 - 15 mu^2 - 56 mu = 0.
% The relevant branch is
% Y = (-(2 mu^2+6mu+1) + sqrt(100mu^2+236mu+1))/2.

% Endpoints
\pgfmathsetmacro{\muminus}{(1-sqrt(33))/2}
\pgfmathsetmacro{\muplus}{(1+sqrt(33))/2}
\pgfmathsetmacro{\mudleft}{(-59-24*sqrt(6))/50}

% Discriminant and Q^2 branches
\pgfmathdeclarefunction{Disc}{1}{%
  \pgfmathparse{100*(#1)^2+236*(#1)+1}%
}

\pgfmathdeclarefunction{Yplus}{1}{%
  \pgfmathparse{max(0, (-(2*(#1)^2+6*(#1)+1) + sqrt(max(0,Disc(#1))))/2)}%
}

\pgfmathdeclarefunction{Yminus}{1}{%
  \pgfmathparse{max(0, (-(2*(#1)^2+6*(#1)+1) - sqrt(max(0,Disc(#1))))/2)}%
}

% Left large branch: Yplus
\addplot[
    red,
    very thick,
    domain=-7:\mudleft,
    samples=800,
    forget plot
]
({x},{sqrt(Yplus(x))});

\addplot[
    red,
    very thick,
    domain=-7:\mudleft,
    samples=800,
    forget plot
]
({x},{-sqrt(Yplus(x))});

% Left small closing branch: Yminus
\addplot[
    red,
    very thick,
    domain=\muminus:\mudleft,
    samples=400,
    forget plot
]
({x},{sqrt(Yminus(x))});

\addplot[
    red,
    very thick,
    domain=\muminus:\mudleft,
    samples=400,
    forget plot
]
({x},{-sqrt(Yminus(x))});

% Right branch: Yplus
\addplot[
    red,
    very thick,
    domain=0:\muplus,
    samples=800,
    forget plot
]
({x},{sqrt(Yplus(x))});

\addplot[
    red,
    very thick,
    domain=0:\muplus,
    samples=800,
    forget plot
]
({x},{-sqrt(Yplus(x))});

% Circle: Q^2 + mu^2 + 4mu = 0, i.e. (mu+2)^2 + Q^2 = 4.

\addplot[
    blue,
    very thick,
    domain=0:360,
    samples=300,
    forget plot
]
({-2+2*cos(x)},{2*sin(x)});

% Mark x=-7.
\addplot[
    only marks,
    mark=*,
    mark size=1.6pt,
    black,
    forget plot
]
coordinates {(-7,0)};

% Mark x=-4.
\addplot[
    only marks,
    mark=*,
    mark size=1.6pt,
    black,
    forget plot
]
coordinates {(-4,0)};

% Mark x=mu_-.
\addplot[
    only marks,
    mark=*,
    mark size=1.6pt,
    black,
    forget plot
]
coordinates {(\muminus,0)};

% Mark x=mu_+.
\addplot[
    only marks,
    mark=*,
    mark size=1.6pt,
    black,
    forget plot
]
coordinates {(\muplus,0)};

% Mark x=0.
\addplot[
    only marks,
    mark=*,
    mark size=1.6pt,
    black,
    forget plot
]
coordinates {(0,0)};

% Mark the circle center.
\addplot[
    only marks,
    mark=*,
    mark size=1.6pt,
    black,
    forget plot
]
coordinates {(-2,0)};

\addlegendimage{red, very thick}
\addlegendentry{\(\one^{\oplus4}\)-block}

\addlegendimage{blue, very thick}
\addlegendentry{\(\eight^{\oplus2}\)-block}

\node[below] at (axis cs:0.1,0) {\(0\)};
\end{axis}
\end{tikzpicture}
\caption{The slice spectral loci in the \((\mu,Q)\)-plane. The red quartic is the \(\one^{\oplus4}\)-block locus, while the blue circle is the \(\eight^{\oplus2}\)-block locus. Here \(\mu_\pm=\frac{1\pm\sqrt{33}}2\). }
\label{fig:loci}
\end{figure}

\begin{remark}\label{rem:loci-geometry}
Figure~\ref{fig:loci} illustrates the two families of non-real right eigenvalues. The quartic locus \eqref{eq:right-spectrum-quartic} consists of two disjoint branches, while the \(\eight^{\oplus2}\)-block contributes the circle \eqref{eq:right-spectrum-circle}. These two loci intersect at exactly three points:
\[
(\mu,Q)=(0,0),
\qquad
\left(-\frac52,\frac{\sqrt{15}}{2}\right),
\qquad
\left(-\frac52,-\frac{\sqrt{15}}{2}\right).
\]
Indeed, eliminating \(Q^2\) using \(Q^2=-\mu^2-4\mu\) from the circle equation and substituting into the quartic yields
\[
\mu(2\mu+5)=0.
\]
Since points on the circle must satisfy \(Q^2=-\mu^2-4\mu\ge 0\), this gives precisely
\[
(\mu,Q)=(0,0),\qquad
\left(-\frac52,\pm\frac{\sqrt{15}}2\right).
\]
In particular, the two nonzero intersection points correspond to the non-real right eigenvalues
\[
\lambda=-\frac52 \pm \frac{\sqrt{15}}{2}\,\uhat.
\]
\end{remark}

\begin{corollary}\label{cor:full-right-spectrum}
The full octonionic right spectrum of \(h\) is obtained from any fixed slice computation by \(G_2\)-symmetry. More precisely,
\begin{equation}\label{eq:full-right-spectrum-set}
\Spec_{\mathrm{right}}(h)
=
\left\{
\mu+Q\uhat\in \O:
\uhat\in \Im\O,\ |\uhat|=1,\ 
(\mu,Q)\in \mathcal L
\right\},
\end{equation}
where \(\mathcal L\subset \R^2\) is the union of the quartic locus
\[
\mu^4
+6\mu^3
+(2Q^2-15)\mu^2
+(6Q^2-56)\mu
+Q^4+Q^2
=0
\]
and the circle
\[
Q^2+\mu^2+4\mu=0.
\]
Equivalently, the non-real right spectrum is obtained by rotating the non-real points of these two planar loci through the \(G_2\)-orbit of a chosen unit imaginary direction.
\end{corollary}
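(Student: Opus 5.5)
The plan is to reduce the statement to Theorem~\ref{thm:right-spectrum-main} together with two elementary observations: every octonion lies in some complex slice, and the slice loci are the same for every unit direction \(\uhat\).

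\textbf{Every octonion lies in a slice.} Let \(\lambda\in\O\). If \(\Im\lambda\neq 0\), set
\[
\uhat=\frac{\Im\lambda}{|\Im\lambda|},\qquad Q=|\Im\lambda|,\qquad \mu=\operatorname{Re}\lambda,
\]
so that \(\lambda=\mu+Q\uhat\in\C_{\uhat}\). If \(\lambda\) is real, then \(\lambda\in\C_{\uhat}\) for every unit \(\uhat\), with \(Q=0\). In both cases \(\lambda\) admits a representation \(\lambda=\mu+Q\uhat\). The representation is not unique, since \((\uhat,Q)\) and \((-\uhat,-Q)\) give the same \(\lambda\). This causes no ambiguity, because both \(p_{\one}\) and the circle equation depend on \(Q\) only through \(Q^2\), so \(\mathcal L\) is invariant under \(Q\mapsto -Q\).

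\textbf{Both inclusions.} Suppose \(\ker(h-R_\lambda)\neq 0\) and write \(\lambda=\mu+Q\uhat\) as above. Proposition~\ref{prop:right-eigenvalue-reduction} shows that \(\mu\) is a real eigenvalue of \(H_{u,Q}\). Theorem~\ref{thm:right-spectrum-main}, applied in the slice \(\C_{\uhat}\), then gives \((\mu,Q)\in\mathcal L\). Conversely, if \((\mu,Q)\in\mathcal L\), the same theorem in the slice \(\C_{\uhat}\) produces a nonzero \(\what\) with \(h(\what)=\what\lambda\). Since \(\mathcal L\) is defined by equations that do not involve \(\uhat\), these two directions together yield \eqref{eq:full-right-spectrum-set}.

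\textbf{The \(G_2\)-symmetry description.} Fix \(\gamma\in G_2\) and extend its action diagonally to \(W\). Because \(\gamma\) is an automorphism of \(\O\), we have \(\gamma(\what\lambda)=(\gamma\what)\,\gamma(\lambda)\). Combined with the equivariance of \(h\) from Proposition~\ref{prop:h-selfadjoint-equivariant}, the relation \(h(\what)=\what\lambda\) implies \(h(\gamma\what)=(\gamma\what)\gamma(\lambda)\). Since \(\gamma(\mu+Q\uhat)=\mu+Q\gamma(\uhat)\), the pair \((\mu,Q)\) is preserved while the direction is moved. \(G_2\) acts transitively on the unit sphere \(S^6\subset\Im\O\), so the spectrum computed in any one slice sweeps out the whole set \eqref{eq:full-right-spectrum-set}. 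This also explains directly why \(\mathcal L\) is slice-independent.

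\textbf{Main obstacle.} The key point is that Theorem~\ref{thm:right-spectrum-main} characterizes \emph{all} right eigenvalues lying in a given slice, including those coming from the \(\six^{\oplus4}\) and \(\twelve\) blocks. One must check that \(Q=0\) is not mishandled there. Real eigenvalues coming from those two blocks, namely \(-4\) and \(0\), already lie on \(\mathcal L\): \(-4\) lies on the circle and \(0\) lies on the quartic. The theorem's proof notes this explicitly, so real \(\lambda\) are covered as well, and the equality of sets holds exactly.
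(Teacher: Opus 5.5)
Your proof is correct and follows essentially the paper's argument: write $\lambda$ in a slice $\C_{\uhat}$, invoke Theorem~\ref{thm:right-spectrum-main}, and use $G_2$-equivariance together with transitivity on $S^6$. The only difference is that you apply the slice theorem directly in every slice instead of transporting one fixed slice by $G_2$. One small slip in your last paragraph: at $Q=0$ the $\six^{\oplus4}$-block also carries the eigenvalues $\frac{1\pm\sqrt{33}}{2}$, not just $-4$ and $0$. These lie on the quartic, so your conclusion is unaffected.
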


\begin{proof}
Fix a unit imaginary octonion \(\uhat\in \Im\O\). The slice computation for
\[
\C_{\uhat}=\R\oplus\R\uhat
\]
shows that right eigenvalues of \(h\) in this slice are exactly the points
\[
\lambda=\mu+Q\uhat
\]
for which \((\mu,Q)\in\mathcal L\). The two equations defining \(\mathcal L\) arise from the \(\SU(3)\)-block decomposition associated to the stabilizer of this particular \(\uhat\).

Now let \(\gamma\in G_2\). Since \(h\) is \(G_2\)-equivariant, if
\[
h(\what)=\what\lambda,
\]
then
\[
h(\gamma\cdot\what)
=
\gamma\cdot h(\what)
=
\gamma(\what\lambda)
=
(\gamma\cdot\what)\,\gamma(\lambda).
\]
Thus \(\gamma(\lambda)\) is again a right eigenvalue of \(h\). Since \(G_2\) acts transitively on the unit sphere
\[
S^6\subset \Im\O,
\]
the right spectrum in every slice \(\R\oplus\R\uhat'\) is obtained from the fixed slice by some \(\gamma\in G_2\) with \(\gamma(\uhat)=\uhat'\).

Finally, every non-real octonion lies in a unique slice
\[
\R\oplus \R\uhat,
\qquad
\uhat=\frac{\Im(\lambda)}{|\Im(\lambda)|}.
\]
Therefore the union over \(|\uhat|=1\) of the fixed slice locus gives the full non-real right spectrum. The real points \(Q=0\) are included in the same equations and agree with the real spectrum computed in Theorem~\ref{thm:real-spectrum}.
\end{proof}

\begin{remark}
For each fixed choice of \(\uhat\), the quartic locus comes from the \(\one^{\oplus4}\)-block and the circle from the \(\eight^{\oplus2}\)-block in the \(\SU(3)=\operatorname{Stab}_{G_2}(\uhat)\)-decomposition. These block decompositions vary with the slice, while the union of the resulting loci is \(G_2\)-invariant.
\end{remark}

\begin{remark}
    Geometrically, the full non-real spectrum is best viewed as a
multi-sheeted radial hypersurface in
\[
\mathbb O=\mathbb R\oplus \operatorname{Im}\mathbb O.
\]
Indeed, the \(G_2\)-invariance implies that the defining equations depend
only on
\[
\mu=\operatorname{Re}\lambda,
\qquad
r=|\operatorname{Im}\lambda|.
\]
Thus each non-real point \((\mu,r)\) of the slice spectral locus lifts to
the \(G_2\)-orbit
\[
\{\mu+r\uhat:|\uhat|=1\}\cong S^6.
\]
Equivalently,
\[
\operatorname{Spec}_{\mathrm{right}}(h)\setminus\mathbb R
=
\left\{
\mu+y:\ y\in\operatorname{Im}\mathbb O,\ y\neq0,\ 
(\mu,|y|)\in L
\right\},
\]
where \(L\) is the union of the quartic and circle loci in the
\((\mu,r)\)-half-plane.  Hence the spectrum projects to a ball in
\(\operatorname{Im}\mathbb O\), but with a finite number of spectral sheets
over each radius.  The number of sheets changes only at the fold radii of
the quartic component, at the cap \(r=2\) of the circle component, and at the
quartic--circle intersection \(r=\sqrt{15}/2\). The number of sheets can be seen in Figure \ref{fig:loci} as the number of values of \(
\mu\) that correspond to any given value of \(Q\). In particular, the ball has radius \(Q_{\func{max}}\), the maximal value of \(|Q|\) attained by the left quartic branch. The maximal number of sheets is then \(6\), which is attained when both branches of the quartic and the circle locus contribute two values of \(\mu\) each. At the real axis the
\(S^6\)-orbits collapse and the five real eigenvalues are recovered.
\end{remark}

The eigenspace dimensions vary around the displayed loci in Figure \ref{fig:loci}. For the \(\one^{\oplus4}\) and \(\eight^{\oplus2}\) blocks, a direct computation of the eigenvectors gives the following.

\begin{corollary}[Eigenspace dimensions]\label{cor:right-eigenspace-dimensions}
Fix a unit imaginary octonion \(\uhat\), and let
\[
\lambda=\mu+Q\uhat\in \R\oplus \R\uhat .
\]
All eigenspace dimensions below are real dimensions.

Let
\[
E_{\lambda}
=
\ker(h-R_{\lambda})
=
\ker(H_{u,Q}-\mu I).
\]
Then the following hold.

\begin{enumerate}
\item Suppose \(Q\neq0\), and suppose \((\mu,Q)\) lies on the quartic locus
\[
p_{\one}(\mu,Q)=0
\]
but not on the circle locus
\[
(\mu+2)^2+Q^2=4.
\]
Then
\[
\dim E_{\lambda}=1.
\]
More precisely, the eigenspace lies in the \(\one^{\oplus4}\)-block and is
spanned by
\[
v_1(\mu,Q)
=
x_1\eta_1+x_2\eta_2+x_3\eta_3+x_4\eta_4,
\]
where one may take
\[
\begin{aligned}
x_1&=6(Q^2-\mu^2-7\mu),\\
x_2&=-\frac{6Q}{7}(Q^2+\mu^2),\\
x_3&=-\bigl(Q^2(\mu+6)+\mu(\mu+1)(\mu+7)\bigr),\\
x_4&=Q\bigl(Q^2+(\mu+7)^2\bigr).
\end{aligned}
\]
Since \(Q\neq0\), the component \(x_4\) is nonzero.

\item Suppose \(Q\neq0\), and suppose \((\mu,Q)\) lies on the circle locus
\[
(\mu+2)^2+Q^2=4
\]
but not on the quartic locus. Then
\[
\dim E_{\lambda}=8.
\]
More precisely, the eigenspace lies in the \(\eight^{\oplus2}\)-block and is
\[
E_{\lambda}
=
\left\{
Q\alpha+\mu\,\omega_{\alpha}:\alpha\in \eight
\right\}.
\]

\item At the two non-real intersection points
\[
(\mu,Q)
=
\left(-\frac52,\pm\frac{\sqrt{15}}2\right),
\]
both the \(\one^{\oplus4}\)-block and the \(\eight^{\oplus2}\)-block contribute.
In this case
\[
\dim E_{\lambda}=1+8=9.
\]
More explicitly,
\[
E_{\lambda}
=
\operatorname{span}
\left\{
105\eta_1-10Q\eta_2-35\eta_3+28Q\eta_4
\right\}
\oplus
\left\{
-2Q\alpha+5\omega_{\alpha}:\alpha\in \eight
\right\}.
\]

\item For real eigenvalues, i.e. \(Q=0\), the eigenspaces recover the
ordinary real eigenspaces of \(h\):
\[
\dim E_{-7}=1,
\qquad
\dim E_{-4}=14,
\qquad
\dim E_0=27,
\]
and
\[
\dim E_{\frac{1-\sqrt{33}}2}
=
\dim E_{\frac{1+\sqrt{33}}2}
=
7.
\]
\end{enumerate}
\end{corollary}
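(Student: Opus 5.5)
The plan is to use the block decomposition of Proposition~\ref{prop:W-su3-decomposition}. Since $H_{u,Q}$ is $\SU(3)$-equivariant (Proposition~\ref{prop:Hua-su3-equivariant}), it preserves each isotypic summand. Hence $E_\lambda=\ker(H_{u,Q}-\mu I)$ is the direct sum of the kernels on the $\one^{\oplus4}$, $\six^{\oplus4}$, $\eight^{\oplus2}$ and $\twelve$ blocks. By Corollary~\ref{cor:six-block-no-nonreal} and Lemma~\ref{lem:twelve-block}, the last two contribute nothing when $Q\neq0$. So for $Q\neq0$ one has
\[
\dim E_\lambda=\dim\ker(M_{\one}(Q)-\mu I)+8\,\dim\ker(M_{\eight}(Q)-\mu I).
\]
The factor $8$ appears because the $\eight^{\oplus2}$ block is $\eight\otimes\R^2$ with $H_{u,Q}$ acting as $\id\otimes M_{\eight}(Q)$, by Lemma~\ref{lem:eight-block}.

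For the $\eight$-block, note that on the circle the matrix $M_{\eight}(Q)-\mu I$ is singular. Its off-diagonal entry $Q$ is nonzero, so its rank is exactly $1$ and the kernel is one-dimensional. Solving $-\mu a+Qb=0$ gives $(a,b)=(Q,\mu)$. In the parametrization $\alpha+\omega_\beta$ this is the stated set $\{Q\alpha+\mu\,\omega_\alpha\}$, of dimension $8$. Away from the circle the kernel is zero.

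For the $\one$-block, I will show that $M_{\one}(Q)-\mu I$ has rank exactly $3$ whenever $Q\neq0$ and $p_{\one}(\mu,Q)=0$. I would do this by checking that the proposed vector $v_1=(x_1,x_2,x_3,x_4)$ is annihilated by $M_{\one}(Q)-\mu I$ modulo $p_{\one}$. Concretely, $v_1$ is a column of the adjugate, so each entry of $(M_{\one}(Q)-\mu I)v_1$ is a multiple of the determinant $p_{\one}$. Since $x_4=Q(Q^2+(\mu+7)^2)\neq0$, this adjugate column is nonzero. A nonzero adjugate means the rank is $3$, and the kernel is then spanned by $v_1$.

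At the intersection points $(-\tfrac52,\pm\tfrac{\sqrt{15}}2)$, both blocks contribute. This gives $1+8=9$, and substituting into $v_1$ and rescaling gives the stated explicit vectors. For example, $Q^2=\tfrac{15}{4}$ gives $x_1=6(\tfrac{15}4-\tfrac{25}4+\tfrac{35}2)=90$, and the remaining entries rescale to match $105\eta_1-10Q\eta_2-35\eta_3+28Q\eta_4$. The $\eight$ part $(Q,\mu)$ is rescaled to $(-2Q,5)$ after multiplying by $-2$. Finally, for $Q=0$ the operator $H_{u,0}=h$, and Theorem~\ref{thm:real-spectrum} gives the listed dimensions.

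The main obstacle is the rank-$3$ claim for the $\one$-block, that is, the algebraic verification that $v_1$ lies in the kernel on the quartic. I would first try the adjugate-column identity symbolically. Failing that, I would verify the three independent row relations directly, reducing each modulo $p_{\one}$. The degenerate concern is whether the adjugate column could vanish on the quartic, and this is excluded by $x_4\neq0$.
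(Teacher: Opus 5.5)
Your proposal is correct and follows essentially the same route as the paper: you discard the $\six^{\oplus4}$ and $\twelve$ blocks for $Q\neq0$, tensor the kernel vector $(Q,\mu)$ of $M_{\eight}(Q)-\mu I$ with $\eight$, and force rank $3$ on the $\one^{\oplus4}$-block through a nonzero $3\times3$ minor. Your adjugate formulation is in fact exact: $(M_{\one}(Q)-\mu I)v_1=p_{\one}(\mu,Q)\,e_3$ holds identically, so $v_1$ is the third adjugate column and $x_4$ is precisely the cofactor behind the paper's ``nonzero $3\times3$ minor.''
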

\begin{proof}
For \(Q\neq0\), the \(\six^{\oplus4}\)- and \(\twelve\)-blocks contribute no real
eigenvalues of \(H_{u,Q}\), by Corollary~\ref{cor:six-block-no-nonreal}
and Lemma~\ref{lem:twelve-block}. Thus only the \(\one^{\oplus4}\)- and
\(\eight^{\oplus2}\)-blocks can contribute.

On the \(\one^{\oplus4}\)-block, the vector \(v_1(\mu,Q)\) satisfies
\[
(M_{\one}(Q)-\mu \id)v_1(\mu,Q)=0
\]
whenever \(p_{\one}(\mu,Q)=0\). Moreover, for \(Q\neq0\), a \(3\times3\) minor
of \(M_{\one}(Q)-\mu \id\) is nonzero; hence the kernel is one-dimensional on the
quartic locus.

On the \(\eight^{\oplus2}\)-block, the matrix is
\[
M_8(Q)=
\begin{pmatrix}
0&Q\\
-Q&-4
\end{pmatrix}.
\]
On the circle locus, the vector \((Q,\mu)\) spans the kernel of
\(M_8(Q)-\mu \id\). Since the corresponding \(SU(3)\)-type is the real
eight-dimensional adjoint representation, this gives the eigenspace
\[
\left\{
Q\alpha+\mu\omega_{\alpha}:\alpha\in \eight
\right\}.
\]
At the two non-real intersection points, these two contributions occur
simultaneously and lie in distinct \(SU(3)\)-isotypic blocks, so the
eigenspace is their direct sum.

Finally, when \(Q=0\), the right-eigenvalue equation reduces to the ordinary
real eigenvalue problem for \(h\), as given by Theorem~\ref{thm:real-spectrum}.
\end{proof}

\section{Concluding remarks}\label{sec:concluding-remarks}

The results of this paper show that the octonionic right spectrum of the canonical operator \(h\) is governed by representation theory of $\Gtwo$ and $\SU(3)$. The ordinary real spectrum is determined by the \(G_2\)-decomposition
\[
W\cong \one\oplus \seven\oplus \seven\oplus \fourteen\oplus \twentyseven,
\]
while the non-real right-eigenvalue equation
\[
h(\what)=\what\lambda
\]
is reduced, after choosing a slice \(\R\oplus\R\uhat\), to an \(\SU(3)=\operatorname{Stab}_{G_2}(\uhat)\)-equivariant problem.

In each such slice, the full non-real right spectrum is controlled by two explicit algebraic loci. The \(\one^{\oplus4}\)-block gives the quartic
\[
\mu^4
+6\mu^3
+(2Q^2-15)\mu^2
+(6Q^2-56)\mu
+Q^4+Q^2
=0,
\]
and the \(\eight^{\oplus2}\)-block gives the circle
\[
Q^2+\mu^2+4\mu=0.
\]
The \(\six^{\oplus4}\)- and \(\twelve\)-blocks contribute no non-real right eigenvalues. Thus the right spectrum is not only computable, but organized by a small number of canonical equations in the invariant quantities \(\mu=\operatorname{Re}\lambda\) and \(Q=|\operatorname{Im}\lambda|\).

These planar loci are slice models for a \(G_2\)-symmetric spectral geometry in \(\mathbb O\). They are independent of coordinates, bases, or a chosen multiplication table: choosing \(\uhat\) merely selects a representative complex slice, and the full right spectrum is obtained by rotating this slice under the transitive \(G_2\)-action on the unit sphere in \(\Im\mathbb O\). In this sense, the quartic and circle record the intrinsic symmetry breaking \(G_2\to\SU(3)\).

Equivalently, these two loci admit an intrinsic \(G_2\)-invariant trace--norm formulation. For \(\lambda\in\mathbb O\), let
\[
T(\lambda)=\lambda+\bar\lambda,
\qquad
N(\lambda)=\lambda\bar\lambda.
\]
Then, for \(\lambda=\mu+Q\uhat\), \( T(\lambda) = 2\mu\) and \(N(\lambda)=\mu^2+Q^2\). Hence, the quartic component can be written as
\begin{equation} \label{eq:quartic-trace-norm}
N(\lambda)^2+\bigl(3T(\lambda)+1\bigr)N(\lambda)
-4T(\lambda)^2-28T(\lambda)=0,
\end{equation}
while the circle component becomes
\begin{equation} \label{eq:circle-trace-norm}
N(\lambda)+2T(\lambda)=0.
\end{equation}
Thus, away from the real axis, the full right spectrum is the union of the non-real parts of two \(\Gtwo\)-invariant real algebraic hypersurfaces in \(\O\).

The operator \(h\) therefore gives a concrete example of genuinely octonionic spectral behavior. A natural Hermitian \(G_2\)-equivariant operator has continuous families of non-real right eigenvalues, so its right spectrum should be viewed as an intrinsic octonionic spectral object: the alignment locus where \(h\) agrees with right multiplication by an octonion, and hence a probe of how nonassociativity enters operator theory.

One way to interpret the right-spectrum computed here is as a symmetry-alignment problem. Suppose $U$ is a vector space over a field $F$.  Then,  \(F\)-linearity of an operator \(T\) on  \(U\) means equivariance with respect to the scalar action of \(F^*\) (the group of non-zero elements of $F$), while the comparison operator \(\lambda\id_U\) is invariant under the full group \(\GL_F(U)\). The eigenvalue equation \(Tv=\lambda v\) therefore tests when \(T\) agrees on a nonzero vector with a maximally symmetric scalar operator. In the octonionic problem studied here, the roles are shifted: the operator \(h\) is already highly symmetric, being \(G_2\)-equivariant, whereas right multiplication by a non-real octonion \(\lambda=\mu+Q\hat u\) selects a complex slice and has only the stabilizer symmetry \(\SU(3)\). Thus the spectral equation \(h(\hat w)=\hat w\lambda\) measures alignment between operators of different symmetry type, and the resulting block decomposition reflects the reduction \(G_2\to\SU(3)\).

The present approach also differs from earlier work on octonionic eigenvalue problems for small matrices. Much of the existing literature studies \(2\times2\) or \(3\times3\) octonionic Hermitian matrices, exceptional Jordan algebra models, and the special algebraic phenomena that arise from determinant-like identities and nonassociative matrix multiplication \cite{DrayJaneskyManogue2000NonRealEigenvalues,DrayManogue1998OctonionicEigenvalue,DrayManogue1999ExceptionalJordan,DrayManogueOkubo2002OrthonormalEigenbases,GillowWilesDray2010ImaginaryEigenvalues,DeLeoDucati2012OctonionicEigenvalue,Okubo1999Symmetric3x3Octonionic}. By contrast, the operator studied here is not chosen as a small octonionic matrix model, but is constructed intrinsically from the octonion multiplication tensor and the \(G_2\)-module \(W=\mathbb O\otimes_{\mathbb R}V\). The resulting spectral loci are therefore not artifacts of a particular matrix presentation: they arise from the branching of \(G_2\)-representations under the stabilizer \(\SU(3)\), and the quartic and circle equations record this symmetry reduction in an explicit algebraic form.

There are several natural directions for further work. First, the quartic and circle should admit an even more intrinsic description in terms of \(G_2\)- and \(\SU(3)\)-intertwining data. In the present paper they arise from explicit block matrices, but their coordinate independence suggests a formulation directly in the algebra of invariant tensors and multiplicity spaces. Such a formulation could clarify why precisely these two loci, and no others, survive in the right spectrum.

Second, the construction invites a manifold-level extension. A \(G_2\)-structure on a \(7\)-manifold defines a fiberwise octonion bundle with a nonassociative product \cite{Grigorian2017G2StructuresOctonionBundles}. In that setting, the operator studied here can be viewed as a pointwise algebraic model for natural operators on octonion-valued bundles. It would be interesting to investigate how the eigenvalue loci interact with torsion, covariant differentiation, and geometric operators on \(G_2\)-manifolds. The guiding analogy is complex geometry: after complexification, an almost complex structure decomposes the tangent bundle into its \((1,0)\)- and \((0,1)\)-eigenbundles, and this decomposition is central to the subject. As discussed in Section~\ref{sec:cx-analogy}, the operator \(h\) packages the slice-wise complex structures associated to the octonionic \(G_2\)-structure. Its spectral loci may therefore be viewed as candidates for octonionic decomposition data on \(G_2\)-manifolds.

A further direction is to organize octonionic spectral theory into a hierarchy of higher right-eigenvalue problems. The equation studied here,
\[
h(\what)=\what\lambda,
\]
is the first non-real level, comparing \(h\) with right multiplication by a single octonion. The ``zeroth'' level would be the ordinary real eigenvalue problem. We can note that the imaginary part of $\what\lambda$ is up to a multiple the commutator of $\what$ and $\lambda$.  Therefore, in a commutative setting, the first level problem collapses to the scalar level. One may next consider two-step eigenvalue equations such as
\[
h(\what)=(\what\lambda_1)\lambda_2.
\]
The difference between \((\what\lambda_1)\lambda_2\) and \(\what(\lambda_1\lambda_2)\) is precisely an associator term, and thus, in an associative setting, the second level collapses to the first level. As such, this hierarchy would separate ordinary scalar behavior, noncommutativity, and genuinely nonassociative effects.

\bibliographystyle{abbrvdin}
\bibliography{refs}
\end{document}